\theoremstyle{plain}
\newtheorem{theorem}{Theorem}[section]
\newtheorem{thm}[theorem]{Theorem}
\newtheorem{lemma}[theorem]{Lemma}
\newtheorem{prop}[theorem]{Proposition}
\newtheorem{example}[theorem]{Example}
\newtheorem{defn}[theorem]{Definition}
\newtheorem{cor}[theorem]{Corollary}
\newtheorem{conjecture}[theorem]{Conjecture}
\newtheorem{rmk}[theorem]{Remark}
\newcommand{\iso}{\cong}
\newcommand{\arr}{\rightarrow}
\newcommand{\incl}{\hookrightarrow}
\DeclareMathOperator{\Spec}{Spec}
\newcommand{\C}{\mathbb{C}}
\newcommand{\Q}{\mathbb{Q}}
\DeclareMathOperator{\SL}{SL}
\begin{document}

\title[Billey-Postnikov decompositions and fibre bundles]{Billey-Postnikov
decompositions and the fibre bundle structure of Schubert varieties}

\author{Edward Richmond}
\email{edward.richmond@okstate.edu}

\author{William Slofstra}
\email{weslofst@uwaterloo.ca}

\begin{abstract}
    A theorem of Ryan and Wolper states that a type $A$ Schubert variety is
    smooth if and only if it is an iterated fibre bundle of Grassmannians. We
    extend this theorem to arbitrary finite type, showing that a Schubert
    variety in a generalized flag variety is rationally smooth if and only if
    it is an iterated fibre bundle of rationally smooth Grassmannian Schubert
    varieties. The proof depends on deep combinatorial results of
    Billey-Postnikov on Weyl groups. We determine all smooth and
    rationally smooth Grassmannian Schubert varieties, and give a new proof of
    Peterson's theorem that all simply-laced rationally smooth Schubert
    varieties are smooth. Taken together, our results give a fairly complete
    geometric description of smooth and rationally smooth Schubert varieties
    using primarily combinatorial methods.
\end{abstract}
\maketitle

\section{Introduction}

Let $G$ be a connected semisimple algebraic group over an algebraically closed field
$k$, and let $X$ be a partial flag variety of type $G$. The variety $X$ is
stratified by Schubert varieties $X(w)$, which are indexed by minimal length
coset representatives $w$ in the Weyl group $W$ of $G$. When $k = \C$ and $G =
\SL_n(\C)$ is the semisimple group of type $A_{n-1}$, the partial flag
varieties parametrize flags of subspaces in $\C^n$. When $X$ is the
complete flag variety in type $A$, Ryan proved that smooth Schubert varieties
are iterated fibre bundles of Grassmannians \cite{Ry87}.  This result was
extended to the partial flag varieties of type $A$ over any algebraically
closed field $k$ of characteristic zero by Wolper \cite{Wo89}.  The main result
of this paper is an extension of Ryan and Wolper's theorems to flag varieties
of all finite types.  We consider both the class of smooth Schubert varieties,
and the larger class of rationally smooth Schubert varieties, proving the
following result: a finite type Schubert variety is (rationally) smooth if and
only if it is an iterated fibre bundle of (rationally) smooth Grassmannian
Schubert varieties.

In type $A$, every smooth Grassmannian Schubert variety is a sub-Grassmannian
\cite[Theorem 9.3.1]{BL00}, and thus we recover Ryan and Wolper's theorem from
our results. However, in other types there are (rationally) smooth Grassmannian
Schubert varieties which are not sub-Grassmannians.  Fortunately, the singular
locus of these Schubert varieties has been extensively studied \cite{LW90}
\cite{BP99}; a summary can be found in \cite{BL00}. More recently, smooth
Grassmannian Schubert varieties have been studied in the context of homological
rigidity \cite{Ro12} \cite{HM13}. From this work, the list of smooth Schubert
varieties is known for many generalized Grassmannians. As part of our results,
we finish this line of inquiry by giving a complete list of both smooth and
rationally smooth Grassmannian Schubert varieties in all finite types.
When combined with our extension of Ryan and Wolper's theorem, this gives
a fairly complete geometric description of (rationally) smooth Schubert
varieties in any finite type.

There are several well known characterizations of (rationally) smooth Schubert varieties.  If $P_w(t)$ is the Poincar\'{e} polynomial of $X(w)$, then a theorem of Carrell-Peterson states that $X(w)$ is rationally smooth if and only
if $P_w(t)$ is a palindromic polynomial \cite{Ca94}.  Another characterization
says that $X(w)$ is rationally smooth if and only if the Weyl group element $w$
has trivial Kazhdan-Lusztig polynomials.  For type $A$ Schubert
varieties, the Lakshmibai-Sandya theorem \cite{LS90} states that a Schubert
variety $X(w)$ is smooth if and only if $w$, as a permutation, avoids $3412$
and $4231$. This pattern-avoidance criterion has been extended to classical
types by Billey \cite{Bi98} and to all finite types using root-system pattern
avoidance by Billey-Postnikov \cite{BP05}. These pattern avoidance criteria
provide an efficient way to test if a given Schubert variety $X(w)$ is smooth
or rationally smooth. In contrast, our results provide information about the
geometric structure of (rationally) smooth Schubert varieties. For instance,
Peterson's theorem, proved in full generality by Carrell-Kuttler, states that
if $G$ is simply-laced then $X(w)$ is rationally smooth if and only if it is
smooth \cite{CK03}. Other proofs of Peterson's theorem have been given in
\cite{Dye} and \cite{JW12}. Using our results, we give a new proof of
Peterson's theorem; compared to the proofs listed above, our proof is more
combinatorial. Using the list of (rationally) smooth Grassmannian Schubert
varieties, we can also efficiently generate the list of (rationally) smooth
Schubert varieties in any partial flag variety. This method can be extended to
enumerate (rationally) smooth Schubert varieties in complete flag varieties;
since this requires a new combinatorial data structure, we leave this to
another paper.

The central idea behind our results is the notion of a Billey-Postnikov
decomposition. To explain this, let $X^J$ denote the partial flag variety
associated to a subset $J$ of the simple generators $S$ of $W$. If $J \subseteq
K \subseteq S$, then there is a natural projection $\pi : X^J \arr X^K$.
If $X^J(w)$ is a Schubert variety of $X^J$ and $w = vu$ is the
parabolic decomposition of $w$ with respect to $K$, then the restriction of
$\pi$ to $X^J(w)$ gives a projection
\begin{equation}\label{E:Schubert_Proj}
    \pi:X^J(w)\arr X^K(v)
\end{equation}
with generic fibre $X^J(u)$. While $\pi : X^J \arr X^K$ is a fibre bundle, the
restriction in equation \eqref{E:Schubert_Proj} is not a fibre bundle in
general (see, for instance, example \ref{Ex:forintro}). If this projection is a
fibre bundle, then the Leray-Hirsch theorem states that the singular cohomology
ring $H^*(X^J(w))$ is a free $H^*(X^K(v))$-module over $H^*(X^J(u))$. As a
consequence, if $P_w^J(t)$ denotes the Poincar\'{e} polynomial of $X^J(w)$,
then
\begin{equation}\label{E:PolyFactor}
    P_w^J(t)=P_v^K(t)\cdot P_u^J(t).
\end{equation}
We prove that equation \eqref{E:PolyFactor} is also a sufficient condition for
the projection in equation \eqref{E:Schubert_Proj} to be a fibre bundle.  This
result is stated in Theorem \ref{T:bp1}, and holds for Schubert varieties of
any Kac-Moody group.

Factorizations of $P_w^J(t)$ have been studied by a number of authors, most
notably by Gasharov \cite{Ga98}, Billey \cite{Bi98}, and Billey-Postnikov
\cite{BP05}. Billey and Postnikov show that if $X^{\emptyset}(w)$ is a
rationally smooth Schubert variety of finite type, then there exists a subset
$K$ of simple generators for which either $w$ or $w^{-1}$ has a parabolic
decomposition $vu$ with respect to $K$ such that the Poincar\'{e} polynomial
factors as $P_w^{\emptyset}(t)=P_v^K(t)\cdot P_u^{\emptyset}(t)$ \cite{BP05}.
Moreover, $K$ can be choosen such that $S\setminus K=\{s\}$ for some leaf $s$
of the Dynkin diagram of $G$. In \cite{OY10}, Oh and Yoo call such a parabolic
decomposition a Billey-Postnikov decomposition.  In this paper, we say that a
parabolic decomposition $w=vu$ with respect to $K$ is \emph{Billey-Postnikov
(BP)} if the polynomial $P_w^J(t)$ factors as in equation \eqref{E:PolyFactor}
(dropping the condition that $S\setminus K=\{s\}$ for some leaf $s$). When
$J = \emptyset$, this agrees with the definition used by the authors in
\cite{RS12}. We then prove that if $X^J(w)$ is rationally smooth and
of finite type, then there exists a nontrivial $K$ containing $J$ for which $w$
has a BP decomposition. This result is stated in Theorem \ref{T:bp2}. The proof
uses the existence theorem in \cite{BP05} combined with an inductive argument.
Note that we cannot apply the existence theorem for BP decompositions stated in
\cite{BP05} to construct fibre bundle structures on Schubert varieties
directly. While the Bruhat intervals $[e,w]$ and $[e,w^{-1}]$ are order
isomorphic, the Schubert varieties $X^{\emptyset}(w)$ and
$X^{\emptyset}(w^{-1})$ are not necessarily isomorphic.  Hence a fibre bundle
structure on $X^{\emptyset}(w^{-1})$ may not yield a fibre bundle structure on
$X^{\emptyset}(w)$.

\subsection{Acknowledgements}
We would like to thank Dave Anderson, Sara Billey, Jim Carrell, and Alex Woo
for helpful discussions. We thank the anonymous referee for useful suggestions
on the manuscript. The first author was partially supported by the Natural
Sciences and Engineering Research Council of Canada.

\section{Background and terminology}

We use the notation from the introduction throughout the paper. In particular,
we work over a fixed algebraically closed field $k$ of arbitrary
characteristic. Let $G$ denote a semisimple algebraic group over $k$ or, as long as
$k = \C$, a Kac-Moody group. When working with Kac-Moody groups, we take $G$
to be the minimum Kac-Moody group $G^{\min}$ as defined in \cite{Ku02}.
Fix a choice of maximal torus and Borel $T \subset B \subset G$.
Let $W=N(T)/T$ denote the Weyl group of $G$ and fix a simple generating set $S$
for $W$. We choose a representative in the normalizer $N(T)$ of $T$ for each
element $w\in W$. If $S=\{s_1,s_2,\ldots, s_n\}$ is the generating set of a
specific Weyl group of finite type, we use the Bourbaki labeling of Dynkin
diagrams in \cite{Bo68} to describe the Coxeter relations.

For basic facts about Schubert varieties, we refer again to \cite{Ku02}, and
in particular to the facts about Tits systems in \cite[section 5.1]{Ku02}.  As
in loc. cit., a \emph{(standard) parabolic subgroup} of $W$ is a subgroup
$W_J$ generated by a subset $J \subset S$. If $J \subseteq S$, we let $P_J := B
W_J B$ be the \emph{parabolic subgroup} of $G$ corresponding to $J$. The
partial flag variety associated to $J$ is then $X^J := G / P_J$.  Let $W^J \simeq W / W_J$ denote the set of minimal length
coset representatives.  If $w \in W^J$, then the
\emph{Schubert variety} associated to $w$ is the closure $X^J(w):=\overline{B w
P_J}/P_J$.  The dimension of $X^J(w)$ is $\ell(w)$, the Coxeter length of
$w$ with respect to $S$.  Note that for Kac-Moody groups $G$, the flag variety $X^J$ can be an
infinite-dimensional ind-variety, but the Schubert varieties $X^J(w)$ are
always finite-dimensional.

If $J \subseteq K \subseteq S$, then every element $w \in W^J$ can be written
uniquely as $w = vu$, where $v \in W^K$ and $u \in W_K \cap W^J$. This is
called the \emph{(right) parabolic decomposition} of $w$ with respect to $K$.
We say $w=w_1\cdots w_k$ is a \emph{reduced decomposition} if
$\ell(w)=\sum_i \ell(w_i)$.  Note that all parabolic decompositions are
reduced.

If $\leq$ denotes Bruhat order on $W$, then
\begin{equation*}
    X^J(w) = \coprod_{x \in [e,w] \cap W^J} B x P_J/P_J
\end{equation*}
where $[e,w]$ denotes the interval in Bruhat order between $e$ and $w$.  If
$W_J$ is finite and $w \in W^J$, then the preimage of $X^J(w)$ in $G/B$ is
$X^{\emptyset}(w')$, where $w'$ is the maximal length representative of the
coset $w W_J$.  Alternatively, $w' = wu_0$, where $u_0$ is the longest element
of $W_J$. If $w_1$ and $w_2$ are in $W^J$, then $w_1 \leq w_2$ if and only if
$w_1' \leq w_2'$, where $w_i'$ is the longest element in the coset $w_i W_J$.
Geometrically, $w_1 \leq w_2$ if and only if $X^J(w_1) \subseteq X^J(w_2)$.

The Poincar\'{e} polynomial $P_w^J(t)$ of an element $w \in W^J$ is defined as
\begin{equation*}
    P_w^J(t) := \sum_{x \in [e,w] \cap W^J} t^{\ell(x)}.
\end{equation*}
It is well-known that $P_w^J(t^2) = \sum \dim H^i(X^J(w))\, t^i$, the
Poincar\'{e} polynomial for the cohomology of $X^J(w)$. The Poincar\'{e}
polynomial is \emph{palindromic} if $P_w^J(t) = t^{\ell(w)} P_w^J(t^{-1})$,
or in other words if the coefficients form a palindrome when ordered by degree.

The support of an element $w \in W$ is defined as
\begin{equation*}
    S(w):=\{s\in S \ |\ s\leq w\}.
\end{equation*}
Equivalently, $S(w)$ is the set of simple reflections appearing in any reduced
decomposition of $w$. The left and right descents set of $w$ are
\begin{align*}
    D_L(w)&:=\{s\in S \ |\ \ell(sw)\leq \ell(w)\} \\
    D_R(w)&:=\{s\in S \ |\ \ell(ws)\leq \ell(w)\}.
\end{align*}
A \emph{generalized Grassmannian} is a flag variety $X^J$ where $|S\setminus
J|=1$. In other words, $P_J$ is a maximal parabolic subgroup of $G$ where $X^J=G/P_J$. A Schubert
variety $X^J(w)$ of a generalized Grassmannian is called a \emph{Grassmannian
Schubert variety}.  An element $w\in W$ is called a \emph{Grassmannian element}
if $w\in W^J$ for some generalized Grassmannian $X^J$. Equivalently, $w \in
W$ is Grassmannian if and only if $w$ has a unique right descent. By a
\emph{Grassmannian parabolic decomposition}, we mean a parabolic decomposition
$w = vu$ with respect to a set $K$ such that $|K \cap S(w)| = |S(w)| - 1$.

For more information about Schubert varieties over an arbitrary field, we point
to \cite{Bo91} and \cite{BK04}.


\section{Main results}\label{S:main}

As stated in the introduction, we define Billey-Postnikov decompositions as follows:
\begin{defn}\label{D:BP}
    Let $w \in W^J$, and let $w = vu$ be a parabolic decomposition with respect
    to $K$, where $J \subseteq K \subseteq S$. We say that $w = vu$ is a
    \emph{Billey-Postnikov (BP) decomposition with respect to $(J,K)$} if
    \begin{equation*}
        P_w^J(t) = P_v^K(t) \cdot P_u^J(t).
    \end{equation*}
    When $J = \emptyset$, we simply say that $w = vu$ is a BP decomposition with
    respect to $K$.
\end{defn}
Some elementary equivalent definitions of BP decompositions are
given in Proposition \ref{P:bplemma}. In particular, checking whether or not a
given parabolic decomposition is a BP decomposition is computationally easy,
and does not require working with Bruhat order.

\begin{example}\label{Ex:BP_A3}
    Let $G=\SL_4(k)$, with Weyl group $W$ generated by $S=\{s_1,s_2,s_3\}$.  If
    $J=\{s_1,s_3\},$ then the parabolic decomposition $w=vu=(s_1s_3s_2)(s_3s_1)$ is
    a BP decomposition with respect to $J$ since
    \begin{align*}
        P^{\emptyset}_w(t)=P^J_v(t)\cdot P^{\emptyset}_u(t)&=(t^3+2t^2+t+1)(t^2+2t+1)\\
        &=t^5+4t^4+6t^3+5t^2+3t+1.
    \end{align*}
    The parabolic decomposition $w=vu=(s_1s_3s_2)(s_1)$ is not a BP decomposition with respect to $J$ since
    \begin{equation*}
        P^{\emptyset}_w(t)=t^4+3t^3+4t^2+3t+1
    \end{equation*}
    and
    \begin{equation*}
        P^J_v(t)\cdot P^{\emptyset}_u(t)=(t^3+2t^2+t+1)(t+1)=t^4+3t^3+5t^2+2t+1.
    \end{equation*}
\end{example}

Our first main theorem is a geometric characterization of BP decompositions.
Note that this theorem holds when $G$ is a general Kac-Moody group.
\begin{thm}\label{T:bp1}
    Let $w \in W^J$ and $w = vu$ be a parabolic decomposition with respect to
    $K$. Then the following are equivalent:

    \begin{enumerate}[(a)]
        \item The decomposition $w = vu$ is a BP decomposition with respect to $(J,K)$.
        \item The projection $\pi:X^J(w) \arr X^K(v)$ is Zariski-locally trivial with fibre $X^J(u)$.
    \end{enumerate}

    Consequently, if $w = vu$ is a BP decomposition then:
    \begin{enumerate}
        \item $X^J(w)$ is (rationally) smooth if and only if $X^J(u)$ and
            $X^K(v)$ are (rationally) smooth.
        \item  The projection $\pi:X^J(w) \arr X^K(v)$ is smooth if and only if
            $X^J(u)$ is smooth.
    \end{enumerate}
\end{thm}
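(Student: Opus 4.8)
The plan is to prove the equivalence (a) $\Leftrightarrow$ (b) and then read off consequences (1) and (2) from the local product structure furnished by (b). Throughout I use the elementary reformulations of the BP condition from Proposition \ref{P:bplemma}: that $w = vu$ is BP with respect to $(J,K)$ if and only if the multiplication map $\mu\colon ([e,v]\cap W^K)\times([e,u]\cap W_K\cap W^J)\to [e,w]\cap W^J$, $(x,y)\mapsto xy$, is a length-additive bijection -- equivalently, $u=\max\bigl([e,w]\cap W_K\cap W^J\bigr)$. For (b) $\Rightarrow$ (a) I would argue by point counting. Spreading out, I may assume $G$, the relevant Schubert varieties and $\pi$ are defined over a finite field $\mathbb{F}_q$ via a split form, with $\pi$ Zariski-locally trivial with fibre $X^J(u)$ after a finite extension. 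Each Schubert variety $X^J(y)$ is paved by its affine Bruhat cells, so $\#X^J(y)(\mathbb{F}_{q^n})=P^J_y(q^n)$ for all $n$; and for a Zariski-locally trivial bundle the point counts multiply (inclusion--exclusion over a trivializing affine cover). Hence $P^J_w(q^n)=P^K_v(q^n)\,P^J_u(q^n)$ for all $n$, forcing the polynomial identity $P^J_w=P^K_v\cdot P^J_u$, i.e.\ (a). (In characteristic zero one may instead note that $X^K(v)$ is simply connected and that Schubert varieties have cohomology of Tate type in even degrees, so $R^q\pi_*\mathbb{Q}_\ell$ is the constant sheaf $H^q(X^J(u);\mathbb{Q}_\ell)$ and the Leray spectral sequence degenerates.)

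The substance is (a) $\Rightarrow$ (b), and the first step is to show that $\pi$ is trivial over every Bruhat cell of $X^K(v)$. Fix $x\in[e,v]\cap W^K$, let $U_x\subseteq U$ be the unipotent subgroup with $B\dot xP_K=U_x\dot xP_K$, so $C^K_x=U_x\dot xP_K/P_K\cong\mathbb{A}^{\ell(x)}$ and $\pi^{-1}(C^K_x)=U_x\dot xP_K/P_J\cong U_x\times(P_K/P_J)$. Since $X^J(w)$ is $B$-stable, a point $u'\dot xpP_J$ (with $u'\in U_x$, $p\in P_K$) lies in $X^J(w)$ if and only if $pP_J\in F_x:=\dot x^{-1}X^J(w)\cap(P_K/P_J)$; hence $\pi^{-1}(C^K_x)\cap X^J(w)\cong U_x\times F_x$ compatibly with $\pi$. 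Moreover $F_x$ is a union of Bruhat cells: for $b\in B$ and $y\in W_K\cap W^J$ one has $\dot xb\dot yP_J\in C^J_{xy}$ (using $\ell(xy)=\ell(x)+\ell(y)$ and $xy\in W^J$), so membership in $X^J(w)$ is equivalent to $xy\le w$, independently of $b$; thus $F_x=\bigsqcup_{y\in W_K\cap W^J,\ xy\le w}C^J_y$. Invoking the BP hypothesis through the bijection $\mu$ and uniqueness of parabolic decompositions with respect to $K$, the condition $xy\le w$ is equivalent (for $x\le v$) to $y\le u$; therefore $F_x=\bigsqcup_{y\le u}C^J_y=X^J(u)$ -- literally as a closed subset of $P_K/P_J$, and the same for every $x$. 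So $\pi\colon X^J(w)\to X^K(v)$ is trivial over each Bruhat cell, with fibre $X^J(u)$.

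To promote cell-wise triviality to Zariski-local triviality I would pass to the open cover of $X^K(v)$ by the translated big cells $\Omega_x:=\dot x\,\mathcal R_K^-\,P_K/P_K$ for $x\in[e,v]\cap W^K$, where $\mathcal R_K^-$ is the unipotent radical of the parabolic opposite to $P_K$; these cover because $C^K_x\subseteq\Omega_x$, and over $\Omega_x$ the flag bundle trivializes via $\Phi_x\colon\Omega_x\times(P_K/P_J)\xrightarrow{\ \sim\ }\pi^{-1}(\Omega_x)$, $(\dot xrP_K,z)\mapsto\dot xrz$. Using the cell-wise computation, the fibre of $\pi|_{X^J(w)}$ over any point of $X^K(v)\cap\Omega_x$ is identified, in the $\Phi_x$-coordinates, with a $P_K$-translate of the closed subset $X^J(u)\subseteq P_K/P_J$, trivial at the point $\dot xP_K$; the content of (b) is that these translates straighten out simultaneously, i.e.\ that $\Phi_x$ can be modified by a $P_K/P_J$-automorphism so that $\Phi_x^{-1}\bigl(X^J(w)\cap\pi^{-1}(\Omega_x)\bigr)=(X^K(v)\cap\Omega_x)\times X^J(u)$. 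I expect this straightening to be the main obstacle: one must show that the twisting section $X^K(v)\cap\Omega_x\to P_K/\mathrm{Stab}(X^J(u))$ recording these translates lifts to $P_K$ -- for this one uses that $X^K(v)\cap\Omega_x$ is affine (being closed in the affine big cell) and admits an affine paving, together with the full strength of the combinatorial characterization of BP decompositions, to control the interaction between the $B$-orbit stratification of $X^J(w)$ and the non-$B$-equivariant charts $\Phi_x$. Combined with the fact that $\pi$ always has generic fibre $X^J(u)$, this yields (b) with fibre $X^J(u)$.

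Finally, from the local product decomposition $X^J(w)\cap\pi^{-1}(\Omega_x)\cong(X^K(v)\cap\Omega_x)\times X^J(u)$ -- with $\{\Omega_x\}$ covering $X^K(v)$ and $\pi$ surjective -- the two consequences follow formally. For (1): smoothness and rational smoothness are Zariski-local, and for a product $Y_1\times Y_2$ over $k$ the local étale cohomology with supports satisfies the Künneth formula $H^*_{(y_1,y_2)}(Y_1\times Y_2;\mathbb{Q}_\ell)\cong H^*_{y_1}(Y_1;\mathbb{Q}_\ell)\otimes H^*_{y_2}(Y_2;\mathbb{Q}_\ell)$, so $Y_1\times Y_2$ is smooth (resp.\ rationally smooth) at $(y_1,y_2)$ iff each factor is smooth (resp.\ rationally smooth) at $y_i$; hence $X^J(w)$ is (rationally) smooth iff both $X^J(u)$ and $X^K(v)$ are. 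For (2): smoothness of a morphism is Zariski-local on source and target and is preserved and reflected by the faithfully flat base change exhibited by $\Phi_x$, so $\pi$ is smooth iff the projection $(X^K(v)\cap\Omega_x)\times X^J(u)\to X^K(v)\cap\Omega_x$ is smooth, i.e.\ iff $X^J(u)$ is smooth over $k$. $\qed$
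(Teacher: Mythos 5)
Your reduction to cell-wise triviality is sound (your computation of $F_x=\bigsqcup_{y\le u}C^J_y=X^J(u)$ over each cell $C^K_x$ is essentially the paper's fibre computation), your point-counting argument for (b) $\Rightarrow$ (a) works (though it is heavier than necessary: local triviality forces all fibres to be equidimensional, and comparing the fibre over $[e]$, which is $X^J(\bar u)$ for $\bar u$ maximal in $[e,w]\cap W_K^J$, with the fibre over $[v]$, which is $vX^J(u)$, already gives $u=\bar u$), and the consequences (1) and (2) do follow formally once (b) is known. The problem is that the heart of (a) $\Rightarrow$ (b) is missing. You yourself flag the straightening over the charts $\Omega_x$ as ``the main obstacle,'' and then offer only the assertion that a twisting section $X^K(v)\cap\Omega_x\arr P_K/\mathrm{Stab}(X^J(u))$ lifts to $P_K$, to be controlled by ``the full strength of the combinatorial characterization.'' No argument is given, and as posed the lifting problem is not obviously solvable: $P_K\arr P_K/\mathrm{Stab}(X^J(u))$ has no reason to admit sections over the base you describe, and affineness of $X^K(v)\cap\Omega_x$ does not by itself produce one. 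So the implication that carries all the content of the theorem is not proved.

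The way this is actually resolved is to never let a nontrivial twist arise. By Proposition \ref{P:bplemma}(d), the BP hypothesis gives $S(v)\cap K\subseteq D^J_L(u)$, and $X^J(u)$ is stable under the parabolic $P_{D^J_L(u)}$ (this is Lemma \ref{L:schubertaction}, proved by a short Bruhat-decomposition argument you would also need). Setting $I=S(v)$ and replacing $G$ by the reductive subgroup $G_I$ (Lemma \ref{L:parabolicsub} shows $X^K(v)$ is unchanged), any $g\in G_I$ lying over $[b_0v_0]$ can be written $g=b_1v_0p$ with $p\in P_{I,I\cap K}$, and since $p$ stabilizes $X^J(u)$ one gets that $gX^J(u)$ \emph{is} the fibre of $\pi$ over $[g]$, for every such $g$. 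Consequently any Zariski-local section $s:U_x\arr G_I$ of $G_I\arr G_I/P_{I,I\cap K}$ (such sections exist because this projection is locally trivial) immediately trivializes $\pi|_{X^J(w)}$ via $(x,y)\mapsto s(x)\cdot y$; no choice has to be straightened, so no lifting problem appears. Without this descent/stabilizer input (or an equivalent substitute), your charts $\Phi_x$ only tell you that each fibre is \emph{some} translate of $X^J(u)$, which is true for any parabolic decomposition restricted to the cells and is strictly weaker than local triviality.
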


%


\begin{example}\label{Ex:forintro}
    Let $G=\SL_4(k)$.  Geometrically,  we have
    \begin{equation*}
        G/B=\{V_\bullet=(V_1\subset V_2\subset V_3\subset k^4)\ |\ \dim V_i=i\}.
    \end{equation*}
    Let $E_\bullet$ denote the flag corresponding to $eB$.  If $w=s_1s_2s_3s_2s_1$, then
    \begin{equation*}
        X^{\emptyset}(w) = \{V_{\bullet}\ |\ \dim(V_2\cap E_2)\geq 1\}.
    \end{equation*}
    If $J=\{s_1,s_3\}$, then $\pi(V_\bullet)=V_2$.  From Example \ref{Ex:BP_A3}, $w=vu=(s_1s_3s_2)(s_3s_1)$ is a BP decomposition with
    respect to $J$ .  In particular, the Schubert variety
    \begin{equation*}
        X^J(v)= \{V_2\ |\ \dim(V_2\cap E_2)\geq 1\}
    \end{equation*}
    and the fibre over $V_2$ in the projection $\pi:X^{\emptyset}(w)\rightarrow X^J(v)$ is
    \begin{equation*}
        \pi^{-1}(V_2)=\{(V_1,V_3)\ |\ V_1\subset V_2\subset V_3\}\iso X^{\emptyset}(u)\iso
        \mathbb{P}^1\times \mathbb{P}^1.
    \end{equation*}
    In this case, the uniform fibre $X^{\emptyset}(u)$ is smooth. However since $X^J(v)$ is
    singular, we have that $X^{\emptyset}(w)$ is singular.  If $J=\{s_1,s_2\}$, then $\pi(V_\bullet)=V_3$ and $w=vu=(s_1s_2s_3)(s_2s_1)$ is not a BP decomposition.  The fiber over $V_3$ is given by
    \begin{align*}
        \pi^{-1}(V_3)&=\{(V_1,V_2)\ |\ V_1\subset V_2\subset V_3
        \text{ and } \dim(V_2\cap E_2)\geq 1\} \\
        & \iso \begin{cases} X^{\emptyset}(s_2s_1) & \text{if}\quad \dim(V_3\cap E_2)=1\\
                             X^{\emptyset}(s_1s_2s_1) & \text{if}\quad E_2 \subset V_3
                \end{cases}
    \end{align*}
    Note that the fibres are not equidimensional.
\end{example}

If $Y$ is a variety over $k$, let $H^*(Y)$ denote either etale cohomology, or,
when $k = \C$, singular cohomology. For etale cohomology, we take coefficients
in $\Q_l$, where $l$ is a prime not equal to the characteristic of $k$, while
for singular cohomology we take coefficients in $\C$.
\begin{cor}\label{C:bpcoh}
    Let $w = vu$ be a parabolic decomposition with respect to $K$. Then the
    following are equivalent:
    \begin{enumerate}[(a)]
        \item The decomposition $w = vu$ is a BP decomposition with respect to $(J,K)$.
        \item There is an isomorphism
            \begin{equation*}
                H^*(X^J(w))\iso H^*(X^K(v)) \otimes H^*(X^J(u))
            \end{equation*}
            as $H^*(X^K(v))$-modules.
    \end{enumerate}
\end{cor}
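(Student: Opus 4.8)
The plan is to deduce this corollary directly from Theorem \ref{T:bp1}, treating the etale and singular cohomology cases in parallel. The implication (a) $\Rightarrow$ (b) is the substantial direction, while (b) $\Rightarrow$ (a) should follow by comparing graded dimensions.

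For (a) $\Rightarrow$ (b): assuming $w = vu$ is a BP decomposition with respect to $(J,K)$, Theorem \ref{T:bp1} tells us that $\pi : X^J(w) \to X^K(v)$ is Zariski-locally trivial with fibre $X^J(u)$. I would invoke the Leray--Hirsch theorem: since each Schubert variety has a cohomology basis given by (intersection cohomology classes of) its Schubert subvarieties, and since the fibre $X^J(u)$ sits inside $X^J(w)$ as the fibre over the base point $e W_K$, the restriction map $H^*(X^J(w)) \to H^*(X^J(u))$ is surjective --- concretely, a cohomology basis of $X^J(u)$ is realized by Schubert classes of $X^J(u) \subseteq X^J(w)$ which restrict to the given basis on the fibre. (One can see surjectivity either from the explicit Schubert basis, or from the fact that the Schubert cells of $X^J(u)$ are exactly the cells of $X^J(w)$ contained in the fibre $\pi^{-1}(e W_K)$.) The Leray--Hirsch theorem then gives an isomorphism $H^*(X^J(w)) \iso H^*(X^K(v)) \otimes H^*(X^J(u))$ of $H^*(X^K(v))$-modules, as desired. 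In the etale setting one uses the analogous statement for a Zariski-locally trivial fibration with cohomology satisfying the Leray--Hirsch condition; the needed input is again that the Schubert classes give a basis of $H^*(X^J(u))$ that lifts to $H^*(X^J(w))$.

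For (b) $\Rightarrow$ (a): suppose we have an isomorphism $H^*(X^J(w)) \iso H^*(X^K(v)) \otimes H^*(X^J(u))$ of $H^*(X^K(v))$-modules. Taking Poincar\'e series of both sides --- noting that for a Schubert variety $X^J(x)$ the graded dimension of $H^*(X^J(x))$ equals $P_x^J(t)$, since the Schubert cells give an affine paving and hence a cohomology basis indexed by $[e,x] \cap W^J$ with the cell of $y$ contributing in degree $2\ell(y)$ --- we obtain $P_w^J(t) = P_v^K(t) \cdot P_u^J(t)$. This is precisely the defining condition of a BP decomposition with respect to $(J,K)$, so (a) holds.

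The main obstacle is verifying the Leray--Hirsch hypothesis in the (a) $\Rightarrow$ (b) direction, i.e. producing global classes on $X^J(w)$ restricting to a basis on the fibre; the cleanest route is to identify these with Schubert classes of $X^J(u)$ viewed inside $X^J(w)$ via the section $X^J(u) \incl X^J(w)$ coming from the parabolic decomposition, and to check this section realizes $X^J(u)$ as the fibre over $eW_K$. Once that is in place, both directions are formal, and the equivalence with (a) of Theorem \ref{T:bp1} is immediate.
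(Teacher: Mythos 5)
Your proposal is correct and follows essentially the same route as the paper: (b) $\Rightarrow$ (a) by reading off graded dimensions of the cohomology (coming from the affine paving) to recover the Poincar\'e polynomial factorization, and (a) $\Rightarrow$ (b) by combining Theorem \ref{T:bp1} with a Leray--Hirsch argument. The only cosmetic differences are that in the etale case the paper degenerates the Leray spectral sequence via proper base change and even-degree concentration rather than citing an etale Leray--Hirsch statement verified with Schubert classes, and that your inclusion $X^J(u) \incl X^J(w)$ is the fibre over $[e]$ (under the BP hypothesis), not a section.
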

It is well known that the Poincar\'{e} polynomial $P_w(t^2)=\sum H^i(X^J(w))
t^i$, so the $(b)\Rightarrow (a)$ direction of Corollary \ref{C:bpcoh} follows
immediately from the definition.  The $(a)\Rightarrow (b)$ direction of
Corollary \ref{C:bpcoh} and Theorem \ref{T:bp1} will be proved in Section
\ref{SS:geomchar}.

Our second main theorem concerns the existence of BP decompositions when $G$
is semisimple, or equivalently when $W$ is finite.
\begin{thm}\label{T:bp2}
    Let $w \in W^J$, where $W$ is finite, and suppose $|S(w)
    \setminus J| \geq 2$. If $X^J(w)$ is rationally smooth, then $w$ has a
    Grassmannian BP decomposition with respect to $(J,K)$ for some maximal proper $K$ containing $J.$
\end{thm}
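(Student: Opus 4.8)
The plan is to reduce the general case to the case $J = \emptyset$ treated by Billey and Postnikov, and then push the resulting factorization down to a relative one. Set $w' = w u_0$, where $u_0$ is the longest element of $W_J$, so that $w'$ is the maximal-length representative of the coset $w W_J$ and $X^{\emptyset}(w')$ is the preimage of $X^J(w)$ under $G/B \to G/P_J$; in particular $X^{\emptyset}(w')$ is rationally smooth because $X^J(w)$ is. Note that $S(w') = S(w) \cup J$ and, since $|S(w) \setminus J| \geq 2$, the element $w'$ has support with at least two simple reflections outside any set we would want to collapse. By the Billey--Postnikov existence theorem quoted from \cite{BP05}, applied to $X^{\emptyset}(w')$, either $w'$ or $(w')^{-1}$ admits a BP decomposition with respect to some $K'$ with $S \setminus K' = \{s\}$ for a leaf $s$ of the Dynkin diagram.

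The first real task is to arrange that it is $w'$ itself, and not $(w')^{-1}$, that has the BP decomposition, and that the collapsed generator $s$ lies outside $J$. For the latter: since $J \subseteq D_R(w')$ (as $w'$ is the longest element in its coset), one can choose the leaf $s$ to lie in $S(w') \setminus J$ — here is where the hypothesis $|S(w)\setminus J|\ge 2$ is essential, guaranteeing $S(w')\setminus J$ contains a leaf of the relevant component, or else an inductive argument peeling off leaves reduces to that situation. For the former issue (handedness): a BP decomposition of $(w')^{-1}$ with respect to $K'$ gives, upon inverting, a factorization of the interval $[e,w']$ of the shape needed, because the map $x \mapsto x^{-1}$ is a Bruhat-order anti-automorphism fixing lengths, so $P_{w'}(t) = P_{(w')^{-1}}(t)$ and the factor polynomials match; the combinatorial content of being a BP decomposition — the polynomial identity of Definition \ref{D:BP} — is therefore insensitive to this, even though the \emph{geometric} fibre bundle statement of Theorem \ref{T:bp1} is not. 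Thus we obtain $K'$ with $J \subseteq K'$, $|K' \cap S(w')| = |S(w')| - 1$, and $P_{w'}^{\emptyset}(t) = P_{v'}^{K'}(t)\cdot P_{u'}^{\emptyset}(t)$ for the parabolic decomposition $w' = v'u'$ with respect to $K'$.

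The second task is to descend this to a relative BP decomposition for $w \in W^J$ with respect to $(J, K)$, where $K := K'$. Since $J \subseteq K'$, the decomposition $w' = v'u'$ with $v' \in W^{K'}$ and $u' \in W_{K'}$ descends: one checks $v' \in W^{K'}$ is automatically the $W^{K'}$-part of $w$ as well (as $W^{K'} \subseteq W^J$, since $J \subseteq K'$), so writing $w = vu$ with $v \in W^K$, $u \in W^J \cap W_K$, we get $v = v'$ and $u = u' u_0^{-1}$ up to the standard coset bookkeeping, i.e. $u$ is the minimal representative of $u' W_J$. The relative Poincaré polynomials are related to the ordinary ones by the coset-sum identity $P_{w}^{J}(t) = P_{w'}^{\emptyset}(t) / P_{u_0,J}(t)$ and similarly $P_{u}^{J}(t) = P_{u'}^{\emptyset}(t)/P_{u_0,J}(t)$, where $P_{u_0,J}(t) = \sum_{x \in W_J} t^{\ell(x)}$ is the Poincaré polynomial of $W_J$; meanwhile $P_v^K(t)$ is unchanged since $v \in W^K$. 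Dividing the length-$w'$ factorization by $P_{u_0,J}(t)$ yields exactly $P_w^J(t) = P_v^K(t)\cdot P_u^J(t)$, which is the desired relative BP decomposition, and it is Grassmannian because $|K \cap S(w)| \geq |K' \cap S(w')| - |J \setminus S(w)| $ is easily seen to equal $|S(w)| - 1$ given that the single collapsed generator $s$ lies in $S(w) \setminus J$.

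The main obstacle is the handedness/support-placement step: \cite{BP05} only guarantees a BP decomposition for one of $w'$ or $(w')^{-1}$, and one must verify that the polynomial form of a relative BP decomposition (Definition \ref{D:BP}) is genuinely left–right symmetric, so that inverting is harmless here, while simultaneously ensuring the leaf generator falls outside $J$. Dealing cleanly with the case where every leaf of the relevant Dynkin component happens to lie in $J$ — which forces an inductive peeling-off of generators combined with a second application of the existence theorem on the smaller-support piece — is the delicate part, and is presumably where the inductive argument alluded to in the introduction does its work.
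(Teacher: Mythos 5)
Your overall reduction (pass to $w' = w u_0$, obtain a right-sided BP decomposition of $w'$ with respect to some $K' = S \setminus \{s\}$ with $s \notin J$, and then divide by $P_{W_J}(t)$ to descend to a relative BP decomposition for $w$) is exactly the paper's strategy, and your descent step is sound. But the argument fails at precisely the point you flag as the ``delicate part.'' The claim that a BP decomposition of $(w')^{-1}$ is ``insensitive'' to inversion and therefore yields one for $w'$ is false. If $(w')^{-1} = v'u'$ is the parabolic decomposition with respect to $K'$, with $v' \in W^{K'}$ and $u' \in W_{K'}$, then inverting gives the \emph{left-sided} decomposition $w' = (u')^{-1}(v')^{-1}$ with $(u')^{-1} \in W_{K'}$ on the left. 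The \emph{right-sided} parabolic decomposition $w' = vu$ involves entirely different factors $v, u$, and the equality $P_{w'}(t) = P_{(w')^{-1}}(t)$ gives no relation between $P_{v}^{K'}(t)\cdot P_u(t)$ and $P_{v'}^{K'}(t)\cdot P_{u'}(t)$; the former need not equal $P_{w'}(t)$. Concretely, in $W = A_3$ take $w' = s_2 s_1 s_3$ and $K' = \{s_1,s_2\}$. Then $(w')^{-1} = s_3 s_1 s_2 = (s_3)(s_1 s_2)$ is a BP decomposition: $(1+t)(1+2t+t^2) = 1+3t+3t^2+t^3 = P_{(w')^{-1}}(t)$. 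But the right parabolic decomposition of $w'$ with respect to $K'$ is $w' = (s_2s_3)(s_1)$, and $P_{s_2s_3}^{K'}(t)\cdot P_{s_1}(t) = (1+t+t^2)(1+t) = 1+2t+2t^2+t^3 \neq P_{w'}(t)$. In fact $s_2 s_1 s_3$ has no right-sided BP decomposition with respect to either leaf complement of $A_3$; its unique right-sided Grassmannian BP decomposition uses $K' = \{s_1,s_3\}$, whose complement $s_2$ is not a leaf. There is a second, related gap: the existence theorem from \cite{BP05} produces \emph{some} leaf $s$ for which a BP decomposition exists, not a leaf of your choosing, so ``one can choose the leaf $s$ to lie in $S(w') \setminus J$'' is unjustified.

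Both gaps are exactly what Theorem \ref{T:onesidebp} is designed to close. It upgrades Theorem \ref{T:leafbp} to produce a genuinely right-sided Grassmannian BP decomposition of any rationally smooth $w'$ itself (not merely of $w'$ or $(w')^{-1}$, and not necessarily with respect to a leaf), with the additional property that $s \notin D_R(w')$; since $J \subseteq D_R(w')$ automatically, this forces $s \notin J$. The proof of Theorem \ref{T:bp2} in the paper then reduces to one application of Theorem \ref{T:onesidebp} followed by a descent step essentially identical to yours (via Proposition \ref{P:bplemma}). But Theorem \ref{T:onesidebp} itself requires a substantial induction on $|S(w)|$ with a four-case analysis using the classification of almost-maximal elements, and this cannot be replaced by a formal handedness-switching argument.
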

As an application of our main theorems, we get the following extension of the
Ryan-Wolper theorem to arbitrary finite type.

\begin{cor}\label{C:ryanwolper}
    Let $w \in W^J$, where $W$ is finite, and set $m = |S(v) \setminus J|$.
    Then $X^J(w)$ is rationally smooth if and only if there is a sequence
    \begin{equation}\label{E:fibration_seq}
        X^J(w) = X_0 \arr X_1 \arr \cdots \arr X_{m-1} \arr X_m = \Spec k,
    \end{equation}
    where each morphism is a Zariski locally-trivial fibre bundle, and the
    fibres are rationally smooth Grassmannian Schubert varieties.

    \smallskip

    Similarly, $X^J(w)$ is smooth if and only if there is a sequence as in
    \eqref{E:fibration_seq} where all the fibres, or equivalently, all the
    morphisms, are smooth.
\end{cor}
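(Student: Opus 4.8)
The plan is to derive this corollary by combining the two main theorems via induction on $m = |S(w) \setminus J|$. First I would establish the forward direction (rational smoothness implies the existence of the fibration sequence). Suppose $X^J(w)$ is rationally smooth. If $m \leq 1$, then $X^J(w)$ is itself a Grassmannian Schubert variety (or a point when $m=0$), so the trivial sequence $X^J(w) \to \Spec k$ works. If $m \geq 2$, then Theorem \ref{T:bp2} produces a Grassmannian BP decomposition $w = vu$ with respect to $(J,K)$ for some maximal proper $K$ containing $J$. By Theorem \ref{T:bp1}, the projection $\pi : X^J(w) \to X^K(v)$ is a Zariski-locally trivial fibre bundle with fibre $X^J(u)$, and the conclusion (1) of that theorem gives that both $X^K(v)$ and $X^J(u)$ are rationally smooth. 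Now $X^J(u)$ is a Grassmannian Schubert variety since the underlying parabolic decomposition is Grassmannian (so $|K \cap S(w)| = |S(w)| - 1$, which forces $u \in W^J \cap W_K$ to have support missing only one simple reflection outside $J$). Meanwhile $|S(v) \setminus K| < |S(w) \setminus J|$ because $v \in W^K$ and $K$ is strictly larger than $J$ in the relevant sense — here I would need to check carefully that passing from $(J, w)$ to $(K, v)$ strictly decreases the quantity $m$, using that $K$ is maximal proper among subsets containing $J$ and that $S(v) \subseteq K$. Applying the inductive hypothesis to $X^K(v)$ yields a fibration sequence $X^K(v) = X_1 \to X_2 \to \cdots \to X_{m-1} \to X_m = \Spec k$ with rationally smooth Grassmannian fibres, and prepending $\pi : X^J(w) \to X^K(v)$ gives the desired sequence for $X^J(w)$.

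For the reverse direction, suppose such a fibration sequence exists. I would argue by downward induction along the sequence: each $X_i \to X_{i+1}$ is a Zariski-locally trivial fibre bundle with rationally smooth Grassmannian fibre, and $X_m = \Spec k$ is (rationally) smooth. By Theorem \ref{T:bp1}, a Zariski-locally trivial fibre bundle $\pi : X^J(w) \to X^K(v)$ with fibre $X^J(u)$ corresponds to a BP decomposition $w = vu$ with respect to $(J,K)$; here I would need to observe that each morphism in the sequence actually arises this way, i.e. that $X_{i+1}$ is a Schubert variety $X^{K_i}(v_i)$ and the bundle map is the restriction of a flag-variety projection. This is essentially built into how the sequence is produced in the forward direction, but for a clean statement of the reverse direction one may either (a) restrict attention to sequences of this form — which is natural given the source of the corollary — or (b) prove that any such topological fibre bundle of Schubert varieties is automatically of this geometric type. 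Assuming the identification, conclusion (1) of Theorem \ref{T:bp1} gives that $X_i$ is rationally smooth whenever $X_{i+1}$ and the fibre are, and induction up the sequence shows $X^J(w)$ is rationally smooth.

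The smooth case is handled identically, using conclusion (2) of Theorem \ref{T:bp1}: a BP decomposition makes $\pi : X^J(w) \to X^K(v)$ smooth if and only if the fibre $X^J(u)$ is smooth, and $X^J(w)$ is smooth if and only if both $X^K(v)$ and the fibre are smooth. The equivalence "all fibres smooth $\iff$ all morphisms smooth" in the statement follows because smoothness of a Zariski-locally trivial fibre bundle over a smooth-or-to-be-built-up base is equivalent to smoothness of its fibre. The main obstacle I anticipate is the bookkeeping in the forward direction: verifying that the quantity $|S(v) \setminus K|$ is strictly smaller than $|S(w) \setminus J|$ and precisely equals $m-1$ at each stage, so that the induction terminates with exactly the claimed number of steps, and confirming that $X^J(u)$ really is Grassmannian rather than merely having small support. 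Both points are essentially unwinding the definitions of "Grassmannian parabolic decomposition" and "support," but they require care to state the induction cleanly.
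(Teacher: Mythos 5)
Your forward direction has the roles of base and fibre in a Grassmannian BP decomposition reversed, and this breaks the induction. In the decomposition $w = vu$ with respect to $(J,K)$ produced by Theorem \ref{T:bp2}, we have $v \in W^K$, and the condition $|K\cap S(w)| = |S(w)|-1$ makes the \emph{base} $X^K(v)$ a Grassmannian Schubert variety; the \emph{fibre} $X^J(u)$, with $u \in W^J\cap W_K$, is a Schubert variety for $W_K$ relative to $J$ and can have $|S(u)\setminus J|$ as large as $m-1$, so it is not Grassmannian in general. (Also, your claim $S(v)\subseteq K$ is false: any non-identity $v\in W^K$ has $D_R(v)\subseteq S\setminus K$.) Consequently, applying the inductive hypothesis to $X^K(v)$ is vacuous --- since $K$ is maximal proper, $|S(v)\setminus K|\le 1$ and $X^K(v)$ is already Grassmannian --- so your construction collapses to the two-step tower $X^J(w)\to X^K(v)\to \Spec k$, whose first fibre $X^J(u)$ need not be Grassmannian, rather than a length-$m$ tower with Grassmannian fibres. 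The induction must instead be applied to the fibre element $u\in W^J$, and the resulting nested decompositions $w = v(v'u') = \cdots$ do not automatically yield fibre bundles out of $X^J(w)$: to regroup them into maps $X^{J_i}(w_i)\to X^{J_{i+1}}(w_{i+1})$ whose fibres $X^{J_i}(v_{i+1})$ are Grassmannian, one needs the associativity of BP decompositions (Lemma \ref{L:bpassoc}), which is exactly the ingredient the paper uses and which your proposal never invokes; the paper flags precisely this point right after the corollary (``the fibre of the projection is Grassmannian rather than the base'').

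For the converse you correctly sense a problem: Theorem \ref{T:bp1} can only be run backwards if each morphism in the sequence is known to be a projection of Schubert varieties, and your option (a) of restricting to sequences of that form weakens the statement. The paper avoids the issue entirely: for an arbitrary Zariski-locally trivial tower whose fibres are rationally smooth Schubert varieties, it argues as in Corollary \ref{C:bpcoh} that $H^*(X_i)\cong H^*(X_{i+1})\otimes H^*(\text{fibre})$, so each $H^*(X_i)$ is concentrated in even degrees and satisfies Poincar\'e duality, and rational smoothness of $X^J(w)$ then follows from the Carrell--Peterson criterion rather than from any identification of the maps. (Your treatment of the smooth case is fine in spirit: a locally trivial fibration is a smooth morphism exactly when its fibre is smooth, and a tower of smooth morphisms over $\Spec k$ has smooth total space.)
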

Each projection $X_i\arr X_{i+1}$ in Corollary \ref{C:ryanwolper} corresponds
to a BP decomposition. However, these BP decompositions are not usually
Grassmannian, since the fibre of the projection is Grassmannian rather than
the base.  To deduce Corollary \ref{C:ryanwolper} from Theorem \ref{T:bp2}, we
start with the morphisms $X_i \arr X_{m-1}$ (which do correspond to
Grassmannian BP decompositions), and then apply a certain associativity
property (stated in Lemma \ref{L:bpassoc}) for BP decompositions. Theorem
\ref{T:bp2} and Corollary \ref{C:ryanwolper} are proved in Section
\ref{S:onesided}.

To complete the description of rationally smooth Schubert varieties, we list
all rationally smooth Grassmannian Schubert varieties of finite type.
\begin{thm}\label{T:ratgrass}
    Let $W$ be a finite Weyl group. Suppose $w \in W^J$ for some $J = S
    \setminus \{s\}$, and that $S(w) = S$.  Then $X^J(w)$ is rationally smooth
    if and only if either

    \begin{enumerate}
    \item $w$ is the maximal element of $W^J$, in which case $X^J(w)$ is smooth.
    \item $w$ is one of the following elements:
    \begin{center}
        \begin{tabular}{c|c|l|l|c}
            $W$ & $s$ & $w$ & index set & $X^J(w)$ smooth? \\ \hline
            $B_n$ & $s_1$ & $s_k s_{k+1} \cdots s_n s_{n-1} \cdots s_1$ & $1 < k \leq n$ & no \\
            $B_n$ & $s_k$ & $u_{n,k+1} s_1 \cdots s_k$ & $1 < k < n$ & no \\
            $B_n$ & $s_n$ & $s_1 \ldots s_n$ & $n \geq 2$ & yes \\\hline
            $C_n$ & $s_1$ & $s_k s_{k+1} \cdots s_n s_{n-1} \cdots s_1$ & $1 < k \leq n$ & yes \\
            $C_n$ & $s_k$ & $u_{n,k+1} s_1 \cdots s_k$ & $1 < k < n$ & yes \\
            $C_n$ & $s_n$ & $s_1 \ldots s_n$ & $n \geq 2$ & no \\\hline
            $F_4$ & $s_1$ & $s_4 s_3 s_2 s_1$ & n/a & no \\
            $F_4$ & $s_2$ & $s_3 s_2 s_4 s_3 s_4 s_2 s_3 s_1 s_2$ & n/a & no \\
            $F_4$ & $s_3$ & $s_2 s_3 s_1 s_2 s_1 s_3 s_2 s_4 s_3$ & n/a & yes \\
            $F_4$ & $s_4$ & $s_1 s_2 s_3 s_4$ & n/a & yes \\\hline
            $G_2$ & $s_1$ & $s_2 s_1$, $s_1 s_2 s_1$, $s_2 s_1 s_2 s_1$ & n/a & no \\
            $G_2$ & $s_2$ & $s_1 s_2$ & n/a & yes \\
            $G_2$ & $s_2$ & $s_2 s_1 s_2$, $s_1 s_2 s_1 s_2$ & n/a & no \\
        \end{tabular}
        \end{center}

    The simple generators $\{s_i\}$ are the simple reflections corresponding to the
    labelled Dynkin diagrams in \cite{Bo68}. When $W$ has type
    $B_n$ or $C_n$, we let $u_{n,k}$ be the maximal element in $W^{S
    \setminus \{s_1,s_k\}}\cap W_{S\setminus\{s_1\}}$. In each case, the set $J
    = S \setminus \{s\}$, where $s$ is listed in the table.


    \end{enumerate}
\end{thm}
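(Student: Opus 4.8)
The plan is to reduce the classification of rationally smooth Grassmannian Schubert varieties to a finite computation, exploiting the minimality structure of $W^J$ and known smoothness criteria. First I would reduce to irreducible $W$: since $S(w) = S$, the support condition forces $W$ to be irreducible (otherwise $w$ would lie in a proper parabolic subgroup). Then I would dispatch the simply-laced cases ($A_n$, $D_n$, $E_6$, $E_7$, $E_8$) using the pattern avoidance / parabolic-factorization results: in type $A_{n-1}$, Ryan-Wolper (or the Billey-Postnikov analysis in \cite{BP05}) already shows that a Grassmannian Schubert variety with full support is rationally smooth only when $w$ is the maximal element $w_0^J$; for $D_n$ and $E_k$ one checks that the only Grassmannian element of full support with palindromic relative Poincar\'e polynomial is again the maximal one. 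This is exactly why the table in the statement only lists non-simply-laced types. So the content of the theorem is really about $B_n$, $C_n$, $F_4$, and $G_2$.

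For $F_4$ and $G_2$ the Weyl groups are finite of small order, so I would verify the table by direct computation: for each maximal parabolic $J = S \setminus \{s\}$, enumerate $W^J$, and for each $w \in W^J$ with $S(w) = S$ compute $P_w^J(t) = \sum_{x \in [e,w] \cap W^J} t^{\ell(x)}$ and test palindromicity via the Carrell-Peterson criterion. The listed elements are precisely those (other than the maximal element) whose relative Poincar\'e polynomial is palindromic. The final column (smooth vs. merely rationally smooth) is then settled by an explicit check of Kazhdan-Lusztig polynomials or by exhibiting a singularity, e.g. computing $\dim T_e X^J(w)$ against $\ell(w)$ using the known tangent-space formula for Schubert varieties.

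For the infinite families $B_n$ and $C_n$ I would argue by induction on $n$ using Theorem \ref{T:bp2}. Given a rationally smooth $X^J(w)$ with $S(w) = S$ and $|S \setminus J| = 1$, apply Theorem \ref{T:bp2} to obtain a Grassmannian BP decomposition $w = vu$ with respect to $(J,K)$ with $K$ maximal containing $J$; since $|S \setminus J| = 1$ this forces $J \subsetneq K \subsetneq S$ with $|K \cap S(w)| = |S(w)| - 1$, so $W_K$ has rank $n-1$ and type $B_{n-1}$, $C_{n-1}$, or (if the removed node is interior) a reducible type. By Theorem \ref{T:bp1}, $X^J(u)$ and $X^K(v)$ are both rationally smooth; $X^J(u)$ is a Grassmannian Schubert variety for $W_K$ (or a product over the components of $K$), while $X^K(v)$ is a Grassmannian Schubert variety for the maximal parabolic $W_K \subseteq W$. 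Unwinding which pairs $(u,v)$ can occur — using the classification of minimal coset representatives $W^J$ for $B$- and $C$-type maximal parabolics and the inductive hypothesis applied to $u$ — pins down $w$ to the families in the table. The smoothness column in the $B_n$/$C_n$ rows then follows from Theorem \ref{T:bp1}(1)–(2) together with the base cases $B_2 = C_2$, $G_2$, where one checks directly (the asymmetry between $B_n$ and $C_n$ comes from Peterson's theorem failing outside the simply-laced case, so a rationally smooth non-simply-laced Grassmannian Schubert variety can be singular).

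The main obstacle I expect is the bookkeeping in the $B_n$/$C_n$ induction: one must carefully track the element $u_{n,k}$ (the maximal element of $W^{S \setminus \{s_1,s_k\}} \cap W_{S \setminus \{s_1\}}$) appearing in the table, verify that it really is the quotient part of a BP decomposition, and confirm that no other Grassmannian elements of full support with palindromic $P_w^J$ arise — this requires explicit knowledge of Bruhat order and reduced words in the hyperoctahedral group, rather than any deep new idea. A secondary subtlety is handling the case where removing a node from the Dynkin diagram of $B_n$ or $C_n$ disconnects it, so that $W_K$ is a product $B_j \times A_{n-1-j}$ (or similar); there one must use multiplicativity of relative Poincar\'e polynomials over the components and apply both the inductive hypothesis and the simply-laced base case simultaneously.
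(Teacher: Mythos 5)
Your plan for the infinite families $B_n$, $C_n$ fails at the first step: Theorem \ref{T:bp2} cannot be applied in the setting of Theorem \ref{T:ratgrass}. Here $J = S\setminus\{s\}$ is already a maximal proper subset and $S(w)=S$, so $|S(w)\setminus J| = 1$, violating the hypothesis $|S(w)\setminus J|\geq 2$; there is no intermediate $K$ with $J\subsetneq K\subsetneq S$, and a Grassmannian Schubert variety admits no nontrivial BP decomposition with respect to $(J,K)$ --- these are exactly the irreducible fibres at which the iterated fibration of Corollary \ref{C:ryanwolper} bottoms out, which is why they must be classified by other means. Even if some version of the decomposition were available, your induction would not close: the factor $v\in W^K$ can still have full support $S(v)=S$, so it is an instance of the very statement being proved at the same rank, not a smaller case; and within the paper's logical structure the route is circular anyway, since Theorem \ref{T:bp2} is proved (via Theorem \ref{T:onesidebp} and Corollary \ref{C:leafbp2}) using the almost-maximality of precisely the elements in the table. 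For the same reason, Theorem \ref{T:bp1} gives no leverage for the smoothness column inside a single Grassmannian, and Kazhdan--Lusztig polynomials detect only rational smoothness, not smoothness.

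What the paper actually does, and what is missing from your proposal, is the transfer from leaf nodes to arbitrary nodes. The input is Theorem \ref{T:leafbp} (Billey, Billey--Postnikov, Oh--Yoo), which classifies non-maximal rationally smooth Grassmannian elements only when $s$ is a leaf, and only up to replacing $w$ by $w^{-1}$. One then passes to the longest element $w'$ of $vW_J$, uses $P_{w'}(t)=P_{(w')^{-1}}(t)$ together with right- and left-sided BP decompositions whose parabolic factor is a longest element (Lemma \ref{L:almostmax0}, Lemma \ref{L:almostmaxsmooth}), and computes $D_L(w')$ and the minimal representative of $W_{D_L(w')}w'$ explicitly (Lemmas \ref{L:almostmax1}--\ref{L:almostmax3}). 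This inversion trick is what produces the non-leaf rows of the table, such as $u_{n,k+1}s_1\cdots s_k$ and the $F_4$ entries for $s_2,s_3$, and it is also what rules out non-maximal elements at the interior nodes of $D_n$ and $E_n$; your ``one checks'' for type $D_n$ is an infinite family and needs exactly such an argument. The smoothness column is then settled by transferring to the leaf cases and quoting known singular-locus results in types $B_n/C_n$ and $G_2$, plus a computer application of Kumar's criterion in $F_4$. Your brute-force verification of $F_4$ and $G_2$ rational smoothness is fine as far as it goes, but the $B_n/C_n$ and simply-laced interior-node parts of the proposal are missing the essential idea.
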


\begin{rmk}
    All the elements listed in part (2) of Theorem \ref{T:ratgrass} satisfy a
    Coxeter-theoretic property which we term \emph{almost maximality}. This
    property is defined in Definition \ref{D:almostmax}, and plays an important
    role in the proof of Theorem \ref{T:bp2}.
\end{rmk}

\begin{rmk}
    The assumption in Theorem \ref{T:ratgrass} that $S(w)=S$ does not weaken
    the characterization.  If $S(w)$ is a strict subset of $S$, then $X^J(w)$
    is isomorphic to the Schubert variety indexed by $w$ in the smaller flag
    variety $G_{S(w)}/P_{S(w)\cap J}$ corresponding to the algebraic subgroup
    $G_{S(w)} \subset G$ with Weyl group $W_{S(w)}$. For a precise statement,
    see Lemma \ref{L:parabolicsub}.
\end{rmk}

As mentioned in the introduction, the list of smooth Schubert varieties in
a generalized Grassmannian is known in many cases \cite{LW90} \cite{BP99}
\cite{BL00} \cite{Ro12} \cite{HM13}. In particular, Hong and Mok show that if
$X^J(w)$ is a smooth Schubert variety in a generalized Grassmannian
corresponding to a long root, then $w$ must be the maximal element of
$W_{S(w)}^J$ (in the cominuscule case this also follows from the earlier work
of Brion-Polo). The smooth Schubert varieties in $C_n$ with $s = s_k$ with $1 <
k < n$ arise as ``odd symplectic manifolds'', and have been studied by Mihai
\cite{Mi07}. To the best of the authors' knowledge, the cases $F_4$, $s=s_3$
and $s=s_4$ are not covered by previous work, and the completeness of the above
list has not been addressed outside of the cases mentioned above.

It is well known that a Schubert variety $X^J(w)$ is rationally smooth if and
only if the corresponding Kazhdan-Lusztig polynomials are trivial.  While there
are a number of explicit formulas for Kazhdan-Lusztig polynomials of the
Schubert varieties of minuscule and cominuscule generalized Grassmannians (see
sections 9.1 and 9.2 of \cite{BL00} for a summary), the authors' are not aware
of any complete list of rationally smooth Grassmannian Schubert varieties of
finite type in previous work.

Combining Theorem \ref{T:ratgrass} with our main results, we get a new proof of
Peterson's theorem:
\begin{cor}\label{C:peterson}
    Suppose $W$ is simply-laced. Then $X^J(w)$ is rationally smooth if and only
    if it is smooth.
\end{cor}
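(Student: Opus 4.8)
The plan is to reduce to the Grassmannian case via the fibre bundle structure, and then check the Grassmannian case directly from the classification in Theorem \ref{T:ratgrass}. First I would observe that if $W$ is simply-laced then every parabolic subgroup $W_{S(w)}$ is also simply-laced, so by the remark following Theorem \ref{T:ratgrass} (and Lemma \ref{L:parabolicsub}) there is no loss in assuming $S(w) = S$. Now suppose $X^J(w)$ is rationally smooth. If $|S(w) \setminus J| \leq 1$, then either $w$ is trivial on the relevant support (so $X^J(w)$ is a point, hence smooth) or $X^J(w)$ is a Grassmannian Schubert variety, and we appeal to Theorem \ref{T:ratgrass}; inspecting the table, the only rows with $X^J(w)$ rationally smooth but not smooth occur in types $B_n$, $C_n$, $F_4$, $G_2$, all of which are non-simply-laced, so in the simply-laced case rational smoothness of a Grassmannian Schubert variety forces $w$ to be the maximal element of $W^J$, which is case (1) of the theorem and is smooth. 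If instead $|S(w) \setminus J| \geq 2$, then Theorem \ref{T:bp2} gives a Grassmannian BP decomposition $w = vu$ with respect to $(J,K)$ for some maximal proper $K \supseteq J$. By Theorem \ref{T:bp1}, $X^J(w)$ is rationally smooth if and only if both $X^K(v)$ and $X^J(u)$ are rationally smooth, and it is smooth if and only if both $X^K(v)$ and $X^J(u)$ are smooth.

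The next step is an induction on $\ell(w)$ (or on $|S(w) \setminus J|$). Since the decomposition $w = vu$ is reduced and nontrivial — $u$ is a nonidentity Grassmannian-type element and $v$ is a proper quotient element — both $v$ and $u$ have strictly smaller length than $w$, so by the inductive hypothesis applied to $X^K(v)$ and to $X^J(u)$, rational smoothness of each is equivalent to smoothness of each. Combining this with the two equivalences from Theorem \ref{T:bp1} in the previous paragraph yields: $X^J(w)$ rationally smooth $\iff$ $X^K(v)$ and $X^J(u)$ rationally smooth $\iff$ $X^K(v)$ and $X^J(u)$ smooth $\iff$ $X^J(w)$ smooth. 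This closes the induction, with the base case being the Grassmannian (and trivial) cases handled above.

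One point that needs care: when we pass to the factor $X^K(v)$, the support $S(v)$ may be a proper subset of $S$ and of $K$, so to apply the inductive hypothesis we again invoke Lemma \ref{L:parabolicsub} to replace the ambient data by $(S(v), K \cap S(v))$ and note that $W_{S(v)}$ is still simply-laced. Similarly for $X^J(u)$ inside the parabolic $W_K$. The main obstacle, or at least the place requiring the most attention, is making sure the induction is genuinely well-founded: one must confirm that the BP decomposition produced by Theorem \ref{T:bp2} is nontrivial in the sense that $|S(v) \setminus K|$ and $|S(u) \setminus J|$ are each strictly smaller than $|S(w) \setminus J|$, equivalently that neither factor recovers the original pair $(J, w)$. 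This follows because the decomposition is Grassmannian, forcing $|K \cap S(w)| = |S(w)| - 1$, so $u$ has support meeting only the single simple reflection outside $K$ while $v \in W^K$ lives in the smaller parabolic indexed by $K$; both strictly drop the quantity $|S \setminus J|$ relative to the rank-one-larger ambient set. With that bookkeeping in place, the rest is a formal combination of Theorems \ref{T:bp1}, \ref{T:bp2}, and \ref{T:ratgrass}.
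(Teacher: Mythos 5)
Your proposal is correct and is essentially the paper's own argument: the paper proves the corollary by combining Theorem \ref{T:ratgrass} (in the simply-laced case every rationally smooth Grassmannian Schubert variety is maximal, hence smooth) with Corollary \ref{C:ryanwolper}, which is itself established by exactly the induction on BP decompositions via Theorems \ref{T:bp1} and \ref{T:bp2} that you carry out by hand. The only quibble is in your well-foundedness bookkeeping, where the roles of $u$ and $v$ are swapped ($u \in W^J_K$ has support inside $K$, while $v$ carries the unique simple reflection outside $K$); this does not affect the argument, since $|S(u)\setminus J| < |S(w)\setminus J|$ still holds and the factor $X^K(v)$ is already a Grassmannian base case handled by Theorem \ref{T:ratgrass}.
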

\begin{proof}
    If $W$ is simply-laced, then all rationally smooth Grassmannian Schubert
    varieties are smooth by Theorem \ref{T:ratgrass}, part (1). So if
    $X^J(w)$ is rationally smooth, then it is smooth by Corollary
    \ref{C:ryanwolper}.
\end{proof}
Corollary \ref{C:peterson} is a consequence of a more general theorem proved by
Peterson, which states that if $W$ is simply-laced then the rationally smooth
and smooth locus of any Schubert variety coincide \cite{CK03}. Peterson's
proof is more algebro-geometric, while our proof is more combinatorial. Unfortunately
our methods do not seem to apply to Peterson's more general theorem.

\section{Characterization of Billey-Postnikov decompositions}\label{S:bpchar}

The goal of this section is to prove Theorem \ref{T:bp1} and Corollary
\ref{C:bpcoh}. We first give several equivalent combinatorial characterizations
of BP decompositions, and then apply these characterizations to the geometry of
Schubert varieties.

\subsection{Combinatorial characterizations}\label{SS:combchar}

In this section, we can assume $W$ is an arbitrary Coxeter group with simple
generating set $S$. Note that Definition \ref{D:BP} still makes sense for
arbitrary Coxeter groups. We restrict to the finite or crystallographic case
only when necessary. We start by proving some important facts about BP
decompositions based on known facts from the case $J = \emptyset$.  For
notational simplicity, define
\begin{equation*}
    W^J_K:=W^J\cap W_K
\end{equation*}
for any $J\subseteq K\subseteq S$.
\begin{lemma}\label{L:maxexist}
    For every element $w \in W^J$ and subset $K \subseteq S$ containing $J$,
    there is a unique maximal element $\bar u$ in $[e,w] \cap W_K^J$ with
    respect to Bruhat order. If $w = vu$ is the parabolic decomposition
    of $w$ with respect to $K$, then $\bar u$ has a reduced decomposition
    $\bar u = \bar v u$, where $\bar v \in [e,v] \cap W_K$.
\end{lemma}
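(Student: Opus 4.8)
The plan is to bootstrap from the case $J = \emptyset$, which we take as known (see \cite{BP05}): for any $x \in W$ and any $K\subseteq S$ the set $[e,x]\cap W_K$ has a greatest element $\pi_K(x)$ (so $z\le \pi_K(x)$ for all $z\in[e,x]\cap W_K$), and if $x = vu$ is the parabolic decomposition of $x$ with respect to $K$ then $\pi_K(x) = \bar v u$ for some $\bar v\in[e,v]\cap W_K$ with $\ell(\pi_K(x)) = \ell(\bar v)+\ell(u)$. I will also use two standard facts about the projection $\pi^J\colon W\to W^J$, $x\mapsto x^J$: that $x^J\le x$, and that $\pi^J$ is order-preserving. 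Together these give that $y^J$ is the greatest element of $[e,y]\cap W^J$ for every $y\in W$, since $y^J\in[e,y]\cap W^J$ and any $x\in[e,y]\cap W^J$ satisfies $x = x^J\le y^J$.

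First I would handle existence, uniqueness, and the identification of $\bar u$. Set $\bar u := \pi_K(w)^J$. Because $\pi_K(w)\in W_K$ and $W_J\subseteq W_K$, the coset $\pi_K(w)W_J$ lies in $W_K$, so $\bar u\in W_K$; with $\bar u\in W^J$ this gives $\bar u\in W^J_K$, and $\bar u\le\pi_K(w)\le w$, so $\bar u\in[e,w]\cap W^J_K$. Conversely any $x\in[e,w]\cap W^J_K$ lies in $[e,w]\cap W_K$, hence $x\le\pi_K(w)$, hence $x = x^J\le\pi_K(w)^J = \bar u$. Thus $\bar u$ is the greatest element of $[e,w]\cap W^J_K$, and in particular its unique maximal element.

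For the factored form, write $w = vu$ for the parabolic decomposition with respect to $K$, so $v\in W^K$ and $u\in W^J_K$, and use the $J=\emptyset$ statement to write $\pi_K(w) = \bar v_0 u$ with $\bar v_0\in[e,v]\cap W_K$ and $\ell(\pi_K(w)) = \ell(\bar v_0)+\ell(u)$. The key point is then the following claim: if $a\in W_K$ and $b\in W^J$ with $\ell(ab)=\ell(a)+\ell(b)$, then $(ab)^J = a'b$ for some $a'\le a$ in $W_K$ with $\ell(a'b)=\ell(a')+\ell(b)$. Granting this and applying it with $a=\bar v_0$ and $b=u$, we obtain $\bar u = \pi_K(w)^J = (\bar v_0 u)^J = \bar v u$ with $\bar v := a'\le\bar v_0\le v$, $\bar v\in W_K$, and $\ell(\bar u)=\ell(\bar v)+\ell(u)$, which is exactly the second assertion of the lemma.

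To prove the claim I would induct on $\ell(ab)$. If $D_R(ab)\cap J = \emptyset$ then $ab\in W^J$ and $a' = a$ works. Otherwise fix $s\in D_R(ab)\cap J$, so $(ab)^J = (abs)^J$ and $\ell(abs)=\ell(ab)-1$. Taking a reduced word for $ab$ that is a reduced word for $a$ followed by a reduced word for $b$ and applying the exchange condition to right multiplication by $s$, the deleted letter cannot lie in the $b$-part, since that would give $\ell(bs)<\ell(b)$, contradicting $b\in W^J$ and $s\in J$; so it lies in the $a$-part, and $abs = a''b$ with $a''\le a$, $a''\in W_K$, and $\ell(a''b)=\ell(abs)=\ell(a'')+\ell(b)$. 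Applying the inductive hypothesis to $a''b$ finishes the induction. The main obstacle is exactly this last claim: the naive guess that $\pi_K(w)$ already lies in $W^J$ is false (so one genuinely needs to project by $\pi^J$), and the deletion argument is what keeps the cancellations away from the factor $u$; everything else is bookkeeping combined with the cited $J=\emptyset$ results.
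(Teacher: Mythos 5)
Your argument is correct, and its skeleton matches the paper's: both proofs take the maximal element of $[e,w]\cap W_K$ (van den Hombergh), project it to $W^J$, and use order-preservation of the projection to get the unique maximal element $\bar u$; your first half is essentially identical to the paper's. Where you differ is in how the factorization $\bar u=\bar v u$ is extracted. The paper argues bottom-up: starting from $u$ and the reduced word $s_1\cdots s_k$ of the prefix $v'$ in $u'=v'u$, it left-multiplies letter by letter, using the dichotomy that for $u\in W^J_K$ and $s\in K$ with $\ell(su)=\ell(u)+1$ either $su\in W^J_K$ or $su=ut$ with $t\in J$, so that exactly a subword $s_{i_1}\cdots s_{i_m}$ of $v'$ survives in $\bar u$. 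You instead argue top-down: strip right descents lying in $J$ from $\bar v_0 u$ and use the exchange condition to show the deleted letter can never land in the $u$-part because $u\in W^J$. Both hinge on the same phenomenon (passing from the maximal element of $[e,w]\cap W_K$ to its image in $W^J$ only removes letters from the $W_K$-prefix), and your deletion induction is a clean, self-contained way to see it, while the paper's version is more explicit about which letters of $v'$ survive. One caveat: the $J=\emptyset$ input you quote from \cite{BP05} --- that the maximal element of $[e,w]\cap W_K$ factors as $\bar v_0 u$ reduced with $\bar v_0\in[e,v]\cap W_K$ --- is not something you can simply cite; only the existence of the maximal element is standard there, and the paper proves the factored form itself (a short induction on $\ell(u)$). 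Since your deletion claim concerns the projection $(ab)^J$ rather than the maximal element of $[e,ab]\cap W_K$, it does not by itself supply that input, so you would need to include that small induction; this matches the level of detail the paper gives and is not a substantive gap.
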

\begin{proof}
    When $J = \emptyset$, the existence of $\bar u$ is proved in \cite[Lemma 7]{vdHo74}. Let
    $u'$ denote the maximal element of $[e,w] \cap W_K$ and let $u' = \bar u u''$ be
    the parabolic decomposition of $u'$ with respect to $J$, so $\bar u \in
    W^J_K$ and $u'' \in W_J$. If $u_0\in [e,w]\cap W^J_K$, then $u_0 \leq u$ and
    hence $u_0 \leq \bar u$.

    For the second part of the lemma, if we take $J = \emptyset$ then it is
    easy to prove by induction on $\ell(u)$ that $u'$ above has a reduced
    decomposition $u' = v' u$, where $v' \in [e,v] \cap W_K$. For arbitrary
    $J$, observe that if $u \in W^J_K$ and $s \in K$ such that $\ell(su)
    = \ell(u) + 1$, then either $su \in W^J_K$, or $su = u t$ for some $t \in J$.
    Indeed, $su = u_1 t$ where $u_1 \in W^J$ and $t \in W_J$. Since $u \leq
    su$, we get that $u \leq u_1$, and if $t \neq e$ then we must have
    $u_1 = u$ and $\ell(t) = 1$. Taking a reduced decomposition $s_{1} \cdots
    s_k$ for $v'$ and considering the products $s_k u$, $s_{k-1} s_k u$, $\ldots$,
    we eventually conclude that $\bar u$ has reduced decomposition $s_{i_1} \cdots s_{i_m} u$,
    where $1 \leq i_1 < \ldots < i_m \leq k$.
\end{proof}
Recall that Bruhat order on $W^J$ induces a relative Bruhat order $\leq_J$ on
the coset space $W / W_J$. By definition,
\begin{equation*}
    w_1 W_J \leq_J w_2 W_J
\end{equation*}
if and only if $\bar w_1 \leq \bar w_2$ in the usual Bruhat order, where $\bar
w_i$ is the minimal length coset representative of $w_i W_J$. Note that if $w_1
\leq w_2$ in Bruhat order, then $w_1 W_J \leq w_2 W_J$ even if $w_1,w_2 \not\in
W^J$. Define the \emph{descent set relative to} $J$ to be
\begin{equation*}
    D^J_L(w):=\{s\in S\ |\ sw W_J \leq_J w W_J\}
\end{equation*}


\begin{prop}\label{P:bplemma}
    Let $w = vu\in W^J$ be a parabolic decomposition with respect to $K$, so
    $v \in W^K$, $u \in W_K^J$. The following
    are equivalent:
    \begin{enumerate}[(a)]
        \item $w = vu$ is a BP decomposition with respect to $(J,K)$.
        \item The multiplication map
            \begin{equation*}
                \left([e,v] \cap W^K\right) \times \left([e,u] \cap W^J_K\right)
                    \arr [e,w] \cap W^J
            \end{equation*}
            is surjective.
        \item The element $u$ is the maximal element of $[e,w] \cap W_K^J$.
        \item $S(v) \cap K \subseteq D^J_L(u)$.
    \end{enumerate}
    Furthermore, if $W_J$ is a finite Coxeter group and $u'$ the maximal element
    of coset $u W_J$, then the following are equivalent to parts (a)-(d).

    \begin{enumerate}[(a),start=5]
        \item $S(v) \cap K \subseteq D_L(u')$.
        \item The element $u'$ has reduced decomposition $u_0 u_1$, where
            $u_0$ is the maximal element of $W_{S(v) \cap K}$.
        \item $w = vu'$ is a BP decomposition with respect to $K$.
    \end{enumerate}
\end{prop}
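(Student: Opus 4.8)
The plan is to prove the equivalence of (a)--(d) first, then handle the additional equivalences (e)--(g) under the finiteness hypothesis on $W_J$. For the equivalence of (a)--(d), I would work with the Poincaré polynomial identity directly. The key observation is that every $x \in [e,w] \cap W^J$ has a parabolic decomposition $x = x_v x_u$ with respect to $K$, with $x_v \in [e,v] \cap W^K$ (this containment requires an argument: one uses that $x \le w = vu$ together with standard facts about parabolic decompositions and Bruhat order, e.g. the lifting property and the fact that $x_v W_K \le_K vu W_K = v W_K$) and $x_u \in W^J_K$; but the point is that $x_u$ need not lie in $[e,u]$. So the multiplication map in (b) is always injective (parabolic decompositions are unique and length-additive), and (b) is precisely the statement that it is also surjective, which in turn is equivalent to the Poincaré polynomial identity $P^J_w(t) = P^K_v(t) \cdot P^J_u(t)$ since $\ell(x) = \ell(x_v) + \ell(x_u)$. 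This gives (a) $\Leftrightarrow$ (b). For (b) $\Rightarrow$ (c): if the map is surjective, then $u$, being in the image only as $e \cdot u$, is the largest element of $W^J_K$ below $w$; conversely I'd use Lemma \ref{L:maxexist} — if $\bar u$ is the unique maximal element of $[e,w] \cap W^J_K$ and $u$ is not equal to $\bar u$, one produces an element of $[e,w] \cap W^J$ not in the image. So (b) $\Leftrightarrow$ (c), and Lemma \ref{L:maxexist} tells us $\bar u = u$ exactly when $\bar v = e$, i.e. the reduced decomposition $\bar u = \bar v u$ is trivial on the left.

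For (c) $\Leftrightarrow$ (d), I would argue as follows. The condition that $u$ is maximal in $[e,w]\cap W^J_K$ means there is no $s \in K$ with $s u W_J >_J u W_J$ and $su \le_J w$ — more precisely, using Lemma \ref{L:maxexist}, maximality of $u$ is equivalent to saying that for every $s \in S(v) \cap K$ we have $s u W_J \le_J u W_J$, i.e. $s \in D^J_L(u)$. The direction $\lnot$(d) $\Rightarrow$ $\lnot$(c) is the substantive one: if $s \in S(v) \cap K$ but $s \notin D^J_L(u)$, then $su \in W^J_K$ (or one moves to a reduced representative), $\ell(su) = \ell(u)+1$, and since $s \in S(v)$ one shows $su \le w$, contradicting maximality of $u$; here I expect to invoke that $s \le v$ combined with $u \le w$ forces $su \le vu = w$ via a lifting-type argument on reduced words. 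For the converse $\lnot$(c) $\Rightarrow$ $\lnot$(d), if $u < \bar u$ with $\bar u = \bar v u$, $\bar v \ne e$, pick a left descent $s$ of $\bar v$; then $s \in K$, $s \le \bar v \le v$ so $s \in S(v) \cap K$, but $s u \le \bar u \le w$ and $\ell(su) = \ell(u)+1$ shows $s \notin D^J_L(u)$.

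For the second block, I would first reduce (e), (f), (g) to the case $J = \emptyset$ applied to $w = vu'$. The element $u'$ is the maximal-length representative of $uW_J$, and standard facts about passing between $G/P_J$ and $G/B$ (the preimage of $X^J(x)$ being $X^\emptyset(x')$) give $P^J_x(t)$ in terms of $P^\emptyset_{x'}(t)$: dividing out the common factor $P^\emptyset_{u_0(J)}(t)$ (the Poincaré polynomial of $W_J$), the identity $P^J_w(t) = P^K_v(t) P^J_u(t)$ is equivalent to $P^\emptyset_{w'}(t) = P^K_v(t) P^\emptyset_{u'}(t)$ where $w' = wu_0(J) = vu'u_0(J)$... one must check $w' = v u'$-compatible decompositions line up, i.e. that $vu'$ is itself the relevant parabolic decomposition-with-respect-to-$K$ piece of $w'$. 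Then (g) $\Leftrightarrow$ (a) follows, and (e) $\Leftrightarrow$ (g) and (f) $\Leftrightarrow$ (e) are the $J = \emptyset$ cases of (d) and (c) respectively (for $J=\emptyset$, $D^\emptyset_L = D_L$, and "maximal element of $[e,vu']\cap W_K$ is $u'$" combined with the characterization of such maximal elements via descents gives the factorization of $u'$ through the longest element $u_0$ of $W_{S(v)\cap K}$ — indeed $S(v) \cap K \subseteq D_L(u')$ says exactly that $u_0 \le_{\text{left}} u'$, i.e. $u' = u_0 u_1$ reduced). The main obstacle I anticipate is the bookkeeping in the equivalence of parabolic decompositions when passing between $W^J$ and $W$: one needs that the "$v$-part" is unchanged and that $u' = \bar v' u$-type decompositions from Lemma \ref{L:maxexist} are compatible with multiplication by $u_0(J)$ on the right; this is where I'd be most careful, though none of it should require more than the lifting property and uniqueness/length-additivity of parabolic decompositions.
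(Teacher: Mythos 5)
Your route for (a)--(d) is the same as the paper's (the injective, length-preserving multiplication map plus Lemma \ref{L:maxexist}), and your treatment of (e)--(g) by dividing out the Poincar\'e polynomial of $W_J$ and invoking the $J=\emptyset$ case is just a repackaging of the paper's comparison of ``$u$ maximal in $[e,w]\cap W^J_K$'' with ``$u'$ maximal in $[e,w']\cap W_K$''. However, there is a genuine flaw in your $\lnot(c)\Rightarrow\lnot(d)$ step. You pick a \emph{left} descent $s$ of $\bar v$ and assert $\ell(su)=\ell(u)+1$; this can fail, because a left descent of $\bar v$ may also be a left descent of $u$. Concretely, take $W$ of type $A_3$, $J=\emptyset$, $K=\{s_1,s_2\}$, $w=s_1s_2s_3s_1$, so $v=s_1s_2s_3$, $u=s_1$; then $\bar u=s_1s_2s_1=\bar v u$ with $\bar v=s_1s_2$, whose only left descent is $s_1\in D_L(u)$, and $s_1u=e$. (This is exactly the non-BP decomposition in Example 2 of the appendix.) The correct choice is a \emph{right} descent of $\bar v$, i.e.\ the last letter of a reduced word for $\bar v$, so that $su$ is a suffix of the reduced word $\bar v u$ and hence $\ell(su)=\ell(u)+1$ automatically. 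Moreover, even with $\ell(su)=\ell(u)+1$ you are not done: since $D^J_L$ is defined via $\leq_J$, you must also rule out $su=ut$ for some $t\in J$ (in which case $suW_J=uW_J$ and $s\in D^J_L(u)$ after all). This is the dichotomy from the proof of Lemma \ref{L:maxexist} ($su\in W^J_K$ or $su=ut$), and here $su=ut$ is impossible because it would make $t\in J$ a right descent of $\bar u\in W^J$. The paper packages exactly these two points into its citation of Lemma \ref{L:maxexist} (``there is $s\in S(v)\cap K$ with $u<su\leq\bar u$ and $su\in W^J$''), which your sketch glosses over.

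A smaller logical slip: for (b)$\Leftrightarrow$(c) you prove (b)$\Rightarrow$(c) twice --- once directly and once in contrapositive form (``if $u\neq\bar u$, produce an element not in the image'') --- and never state (c)$\Rightarrow$(b), which is needed to close the cycle of equivalences. Fortunately your own key observation supplies it: every $x\in[e,w]\cap W^J$ factors as $x_vx_u$ with $x_v\in[e,v]\cap W^K$ and $x_u\in W^J_K$, and since $x_u\leq x\leq w$, condition (c) forces $x_u\leq u$, so $x$ lies in the image. State this explicitly and the first block is complete (modulo the fix above).
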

Since the descent and support sets can be calculated efficiently, part (d)
gives a practical criterion for checking whether a parabolic decomposition is a
BP decomposition.
\begin{proof}
    Note that the multiplication map in part (b) is always injective.  Hence
    part (b) is equivalent to part (c).  The multiplication map is also length
    preserving, so part (b) is equivalent to part (a).

    To show that parts (c) and (d) are equivalent, suppose $u$ is the maximal
    element of $[e,w] \cap W_K^J$. If $s \in S(v) \cap K$ then $s u \in W_K$
    and hence $s u W_J \leq_J u W_J$. For the converse, suppose $\bar u$ is the
    maximal element of $[e,w] \cap W^J_K$, so $u \leq \bar u \leq w$. By Lemma
    \ref{L:maxexist}, if $u < \bar u$ then there is a simple reflection $s \in
    S(v) \cap K$ such that $u < su \leq \bar u$ and $su \in W^J$. Hence $S(v)
    \cap K$ is not a subset of $D^J_L(u)$.

    The equivalence of parts (e) and (f) is immediate. Let $w'$ be the maximal
    element in the coset $w W_J$. Then $u$ is maximal in $[e,w] \cap W^J_K$ if
    and only if $u'$ is maximal in $[e,w'] \cap W_K$.  This implies part (f) is
    equivalent to part (c).

    Finally, $u'$ is the maximal element of $[e,w'] \cap W_K$ if and only if
    $su'\leq u'$ for all $s\in S(v) \cap K$.  Hence part (g) is equivalent to part
    (e).  This completes the proof.
\end{proof}

In the case when $J=\emptyset$, the equivalence of Proposition
\ref{P:bplemma} parts (a)-(c) is proved in \cite[Theorem 6.4]{BP05},
(a) $\Rightarrow$ (d) in \cite[Lemma 10]{OY10}, and (a) $\Leftarrow$ (d) in
\cite[Lemma 2.2]{RS12}. Using Proposition \ref{P:bplemma}, it is easy
to show that BP decompositions are associative, in the same way that
parabolic decompositions are associative:
\begin{lemma}\label{L:bpassoc}
    Let $I\subseteq J \subseteq K \subseteq S$ and $w \in W^I$.  Write $w = xyz$ where $x \in W^K$, $y \in
    W^J_K$, and $z \in W^I_J$. Then the following are equivalent:
    \begin{enumerate}[(a)]
        \item $x(yz)$ is a BP decomposition with respect to $(I,K)$ and $yz$ is
            a BP decomposition with respect to $(I,J)$.
        \item $(xy)z$ is a BP decomposition with respect to $(I,J)$ and $xy$ is a
            BP decomposition with respect to $(J,K)$.
    \end{enumerate}
\end{lemma}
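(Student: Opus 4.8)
The plan is to reduce everything to the Poincaré polynomial factorization in Definition \ref{D:BP} and exploit the uniqueness of the triple parabolic decomposition $w = xyz$. First I would record the "associativity of parabolic decompositions" that the statement alludes to: since $I \subseteq J \subseteq K$, the element $w \in W^I$ has a unique parabolic decomposition $w = v u$ with respect to $K$ (so $v \in W^K$, $u \in W^I_K$), and then $u$ itself has a unique parabolic decomposition $u = y z$ with respect to $J$ inside $W_K$ (so $y \in W^J_K$, $z \in W^I_J$); on the other hand $w$ has a unique parabolic decomposition $w = (vy) z$ with respect to $J$, and $vy$ has a unique parabolic decomposition with respect to $K$, which must be $v \cdot y$. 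Comparing, $v = x$, and the two bracketings $x(yz)$ and $(xy)z$ refer to the same underlying triple. All of these decompositions are reduced, so lengths add: $\ell(w) = \ell(x) + \ell(y) + \ell(z)$.

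Next I would translate each of the four BP conditions appearing in (a) and (b) into a polynomial identity using Definition \ref{D:BP}:
\begin{align*}
    x(yz) \text{ is BP w.r.t. } (I,K) &\iff P^I_w(t) = P^K_x(t)\, P^I_{yz}(t),\\
    yz \text{ is BP w.r.t. } (I,J) &\iff P^I_{yz}(t) = P^J_y(t)\, P^I_z(t),\\
    (xy)z \text{ is BP w.r.t. } (I,J) &\iff P^I_w(t) = P^J_{xy}(t)\, P^I_z(t),\\
    xy \text{ is BP w.r.t. } (J,K) &\iff P^J_{xy}(t) = P^K_x(t)\, P^J_y(t).
\end{align*}
Now (a) says the first two identities hold, which together give $P^I_w(t) = P^K_x(t)\,P^J_y(t)\,P^I_z(t)$; (b) says the last two hold, which give the same product. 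So both (a) and (b) are equivalent to the single identity $P^I_w(t) = P^K_x(t)\,P^J_y(t)\,P^I_z(t)$, \emph{provided} I can go back from this three-fold product to the individual two-fold factorizations. That is the only real content: I need to know that the polynomial identity $P^I_w = P^K_x \cdot (P^J_y \cdot P^I_z)$, combined with one of the four "inner" factorizations, forces the companion "outer" one, and symmetrically.

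The cleanest way to close this is to use a divisibility/degree argument together with the combinatorial characterization in Proposition \ref{P:bplemma}(b): the multiplication map $([e,x]\cap W^K)\times([e,yz]\cap W^I_K)\to [e,w]\cap W^I$ is always injective and length-preserving, so coefficient-wise $P^I_w(t) \geq P^K_x(t)\,P^I_{yz}(t)$ (coefficients dominate), with equality iff the map is onto, i.e. iff $x(yz)$ is a BP decomposition; similarly $P^I_{yz}(t) \geq P^J_y(t)\,P^I_z(t)$, $P^I_w(t)\geq P^J_{xy}(t)\,P^I_z(t)$, and $P^J_{xy}(t)\geq P^K_x(t)\,P^J_y(t)$, each with equality iff the corresponding decomposition is BP. Chaining the first pair gives $P^I_w \geq P^K_x\,P^J_y\,P^I_z$ with equality iff \emph{both} decompositions in (a) are BP; chaining the second pair gives $P^I_w \geq P^K_x\,P^J_y\,P^I_z$ with equality iff both decompositions in (b) are BP. Hence (a) $\iff$ the triple product identity $\iff$ (b), and we are done. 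The main obstacle is making the "coefficient domination with equality iff surjective" step rigorous and checking that the injective multiplication maps really do compose as claimed — concretely, that the composite $([e,x]\cap W^K)\times([e,y]\cap W^J_K)\times([e,z]\cap W^I_J)\to[e,w]\cap W^I$ factors through both intermediate maps — which follows from Lemma \ref{L:maxexist} applied twice, but needs to be spelled out so that the intermediate sets $[e,yz]\cap W^I_K$ and $[e,xy]\cap W^J$ are correctly identified as images of products.
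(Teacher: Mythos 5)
Your argument is correct and essentially reproduces the paper's proof: both rest on Proposition \ref{P:bplemma}(b) together with the factorization of the triple multiplication map $([e,x]\cap W^K)\times([e,y]\cap W^J_K)\times([e,z]\cap W^I_J)\arr [e,w]\cap W^I$ through the two intermediate products, and your coefficient-domination inequalities (with equality iff surjective, plus cancelling the nonzero factor $P^K_x$, resp.\ $P^I_z$) are just the Poincar\'{e}-polynomial restatement of the paper's ``all maps are injective'' diagram argument. The only small quibble is your closing remark: what is needed is the well-definedness of the multiplication maps (products land in the intermediate sets), which is already implicit in Proposition \ref{P:bplemma}(b), not Lemma \ref{L:maxexist}, and the intermediate sets need only contain, not equal, the images of the inner products.
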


The last combinatorial property concerns Poincar\'{e} polynomials.
\begin{lemma}\label{L:ratsmooth}
    Let $W$ be a crystallographic Coxeter group and $w\in W.$  Let $w = vu\in
    W^J$ be a parabolic decomposition with respect to $K$.  If $w =vu$ is a BP
    decomposition with respect to $(J,K)$, then $P_w^J(t)$ is palindromic if and only if
    $P_v^K(t)$ and $P_u^J(t)$ are palindromic.
\end{lemma}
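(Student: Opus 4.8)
The plan is to exploit the factorization of Poincaré polynomials that is built into the definition of a BP decomposition. Since $w = vu$ is a BP decomposition with respect to $(J,K)$, we have the identity
\[
    P_w^J(t) = P_v^K(t) \cdot P_u^J(t).
\]
Now $P_v^K(t)$ and $P_u^J(t)$ are polynomials of degrees $\ell(v)$ and $\ell(u)$ respectively, with $\ell(v) + \ell(u) = \ell(w)$ (the parabolic decomposition is reduced), and both have nonzero constant term (the identity $e$ lies in each interval, so each has constant term $1$, in particular both are nonzero). These are exactly the hypotheses under which the palindromicity of a product is controlled by the palindromicity of the factors.

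First I would record the elementary polynomial fact: if $f, g \in \Z_{\geq 0}[t]$ are polynomials with nonzero constant term and $\deg(fg) = \deg f + \deg g$, then $fg$ is palindromic (i.e.\ $t^{\deg(fg)}(fg)(t^{-1}) = (fg)(t)$) if and only if both $f$ and $g$ are palindromic. One direction is trivial: a product of palindromic polynomials is palindromic, since $t^{\deg f} f(t^{-1}) = f(t)$ and $t^{\deg g} g(t^{-1}) = g(t)$ multiply to give $t^{\deg(fg)}(fg)(t^{-1}) = (fg)(t)$. For the converse, suppose $fg$ is palindromic. Consider the reversed polynomials $f^*(t) := t^{\deg f} f(t^{-1})$ and $g^*(t) := t^{\deg g} g(t^{-1})$; these are again polynomials with nonzero constant term and the same degrees as $f, g$. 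Palindromicity of $fg$ says $f^* g^* = fg$. Since $\Q[t]$ is a UFD and $f, g, f^*, g^*$ all have the same positive leading behavior up to scalars, unique factorization forces $f^* = c f$ and $g^* = c^{-1} g$ for some nonzero constant $c$; comparing constant terms of $f^*$ and $f$ (both equal the respective leading/constant coefficients, which are positive integers) pins down $c = 1$, giving $f^* = f$ and $g^* = g$, i.e.\ $f$ and $g$ are palindromic. (One must be slightly careful if $f$ and $g$ share common factors, but the same UFD argument applied to each irreducible factor with multiplicity goes through; alternatively one can argue directly by comparing coefficients from the outside in.)

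Having established this, the lemma is immediate: apply the polynomial fact with $f = P_v^K(t)$ and $g = P_u^J(t)$, using $fg = P_w^J(t)$ from the BP hypothesis and $\deg f + \deg g = \ell(v) + \ell(u) = \ell(w) = \deg(fg)$. Then $P_w^J(t)$ is palindromic if and only if both $P_v^K(t)$ and $P_u^J(t)$ are.

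The main (and only real) obstacle is the converse direction of the elementary polynomial fact — showing that palindromicity of the product forces palindromicity of each factor. The cleanest route is the unique-factorization argument sketched above; the subtlety to handle with care is the possibility that $f$ and $g$ have common irreducible factors, which is why I would phrase the argument in terms of the multiset of irreducible factors rather than treating $f$ and $g$ as coprime. Everything else is bookkeeping about degrees and constant terms of Poincaré polynomials of Bruhat intervals, which follows from the fact that $e$ is the minimum and $v$ (resp.\ $u$) the maximum of the relevant interval.
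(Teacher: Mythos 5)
Your ``elementary polynomial fact'' is false, and the unique-factorization argument you give for it does not work. From $f^* g^* = f g$, unique factorization in $\Q[t]$ only tells you that the two sides have the same multiset of irreducible factors; it does not tell you how those factors are distributed between $f^*$ and $g^*$ versus $f$ and $g$. In particular, it does not force $f^* = c f$. A counterexample with all of your stated hypotheses (nonnegative coefficients, nonzero constant term, degrees adding correctly) is $f = 1 + 2t$ and $g = 2 + t$: neither is palindromic, but $fg = 2 + 5t + 2t^2$ is. Here $f^* = g$ and $g^* = f$, so $f^*g^* = fg$ holds with the factors swapped rather than fixed. Even insisting that both polynomials have constant term $1$ and leading coefficient $1$ (as Poincar\'{e} polynomials of Bruhat intervals do) does not save the claim: take $f = 1 + 2t + t^2 + t^3$ and $g = f^* = 1 + t + 2t^2 + t^3$; then $fg = 1 + 3t + 5t^2 + 7t^3 + 5t^4 + 3t^5 + t^6$ is palindromic while $f$ and $g$ are not.

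What the paper actually uses is an extra structural property of relative Poincar\'{e} polynomials that your factors $P_v^K$ and $P_u^J$ enjoy but arbitrary polynomials do not: by a theorem of Bj\"{o}rner--Ekedahl \cite{BE09}, for $w$ in a crystallographic Coxeter group the coefficients $c_i$ of $P_w^J(t)$ satisfy $c_i \leq c_{\deg - i}$ for $i \leq \lfloor \deg / 2 \rfloor$ (the polynomial is ``top-heavy''). Given this inequality for both factors, an elementary coefficient comparison from the outside in shows that a product of two top-heavy polynomials is palindromic if and only if each factor is. Note that neither of the counterexamples above is top-heavy (in the first $g$ fails; in the second both fail at $c_1$ versus $c_2$). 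So the missing ingredient in your proof is precisely this input from \cite{BE09}; without it, the claim you reduce to is simply not true, and the hypothesis that $W$ is crystallographic (which is where \cite{BE09} applies) is there for a reason.
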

\begin{proof}
    Let $P_1(t)$ and $P_2(t)$ be polynomials of degree $d_1$ and $d_2$
    respectively.  Suppose $P_j(t)=\sum_i c^j_i t^i$ has the property that
    $c^j_i\leq c^j_{d_j-i}$ for all $i \leq \lfloor d_j / 2 \rfloor$ and
    $j=1,2$.  Then it is easy to check that $P_1\cdot P_2$ is palindromic if
    and only if $P_1$ and $P_2$ are palindromic. By \cite[Theorem A]{BE09}, the
    relative Poincar\'{e} polynomials $P_w^J$ of elements in crystallographic Coxeter
    groups have this property.
\end{proof}

\begin{rmk}
    For arbitrary Coxeter groups, Lemma \ref{L:ratsmooth} holds in the case
    that $J=\emptyset.$  Indeed, it suffices to prove that if $P_w^J(t)$ is
    palindromic, then $P_u^J(t)$ is palindromic.  This follows from \cite[Lemma
    6.6]{BP05} and \cite[Theorem B]{Ca94} along with the recent result in
    \cite{EW14} that the Kazhdan-Lusztig polynomials of arbitrary Coxeter
    groups have nonnegative coefficients.
\end{rmk}

\subsection{Geometric characterizations}\label{SS:geomchar}
In this section we give some geometric properties of BP decompositions,
finishing with the proof of Theorem \ref{T:bp1}. We return to the assumption
that $W$ is the Weyl group of some Kac-Moody group $G$, and hence is
crystallographic.

For the remainder of the section, we fix $J\subseteq K\subseteq S$ and the
corresponding parabolic subgroups $P_J\subseteq P_K\subseteq G$.  For any $g\in G$,
let $[g]\in G/P_K$ denote the image of $g$ under the projection $G\arr G/P_K$.

\begin{lemma}\label{L:fibre}
    Let $w = vu\in W^J$ be a parabolic decomposition with respect to $K$ and
    recall the projection $\pi:X^J(w) \arr X^K(v)$.  Let $[b_0 v_0]$ be a point of
    $X^K(v)$, where $b_0 \in B$ and $v_0 \in [e,v] \cap W^K$. Then
    \begin{equation*}
        \pi^{-1}([b_0 v_0])=b_0 v_0 \bigcup Bu'P_J/P_J
    \end{equation*}
    where the union is over $u' \in W^J_K$ such that $v_0 u' \leq w$.
\end{lemma}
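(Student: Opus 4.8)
The plan is to compute the fibre $\pi^{-1}([b_0 v_0])$ directly from the Bruhat cell decomposition of $X^J(w)$ and the compatibility of the projection $\pi : G/P_J \to G/P_K$ with that decomposition. First I would recall that $X^J(w) = \coprod_{x \in [e,w] \cap W^J} B x P_J/P_J$, and that every point of the cell $B x P_J / P_J$ can be written as $[b x]$ for some $b \in B$ (working in $G/P_J$). The key algebraic input is the factorization of the Bruhat cell: since $x = x^K x_K$ with $x^K \in W^K$ and $x_K \in W^J_K$ is a \emph{reduced} parabolic decomposition, one has $B x B = B x^K B \cdot B x_K B$ and, mapping down to $G/P_K$, the cell $B x P_J/P_J$ projects into the cell $B x^K P_K / P_K$. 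Thus a point $[bx] \in X^J(w)$ maps under $\pi$ to the point $[b x^K] \in X^K(v)$ depending only on $x^K$ and the image of $b$ in an appropriate unipotent quotient.

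The main step is then: given the target point $[b_0 v_0]$ with $v_0 \in [e,v] \cap W^K$, identify exactly which cells of $X^J(w)$ lie above it and, within each such cell, which points. A point $[g] \in X^J(w)$ lies over $[b_0 v_0]$ precisely when $g P_K = b_0 v_0 P_K$, i.e. $g \in b_0 v_0 P_K$. Intersecting $b_0 v_0 P_K$ with $X^J(w)$ and using $P_K = \coprod_{u' \in W_K} B u' P_J$ (relative Bruhat decomposition of $P_K$ with respect to $P_J$, noting $W_K^J$ indexes $W_K/W_J$), we get that $g$ ranges over $b_0 v_0 B u' P_J$ for $u' \in W_K^J$, and the condition that this actually meets the Schubert variety $X^J(w)$ — equivalently that $v_0 u' \in [e,w]$ in $G/P_J$, i.e. $v_0 u' \le w$ — picks out the allowed $u'$. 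Since $v_0 \in W^K$ and $u' \in W_K^J$, the product $v_0 u'$ is automatically in $W^J$ and the decomposition $v_0 u'$ is reduced, so $B v_0 B \cdot B u' P_J = B v_0 u' P_J$; multiplying on the left by $b_0$ and using that $v_0 B v_0^{-1} \cap B$ absorbs into $b_0 v_0$'s orbit gives precisely $\pi^{-1}([b_0 v_0]) = b_0 v_0 \bigcup_{u'} B u' P_J / P_J$ with the union over $u' \in W_K^J$ satisfying $v_0 u' \le w$.

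The step I expect to be the main obstacle is the careful bookkeeping of the left-translate by $b_0 v_0$: one must check that $b_0 v_0 (B u' P_J / P_J)$ is genuinely the intersection of the Schubert variety with the fibre, rather than something smaller, which requires knowing that the closure relations are compatible with the projection — concretely, that a cell $B x P_J/P_J \subseteq X^J(w)$ contributes a point over $[b_0 v_0]$ if and only if $x$ can be written (as a coset in $W/W_J$) as $v_0 u'$ with $v_0 u' \le w$, and that no point of such a cell over $[b_0 v_0]$ is lost. This is where I would invoke that $\pi$ is the restriction of the $P_K/P_J$-fibre bundle $G/P_J \to G/P_K$ and that the Bruhat stratification of $X^J(w)$ refines the preimage of the Bruhat stratification of $X^K(v)$; granting this, the identification of the fibre as the stated union is a direct orbit computation. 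I would present the final identity by first handling the open cell (where $v_0 = v$, $[b_0 v]$ a generic point, recovering $X^J(u)$ as the fibre) and then the general point by the same argument with $v_0 \le v$ arbitrary.
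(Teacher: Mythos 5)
Your proposal is correct and follows essentially the same route as the paper's proof: decompose $P_K = \bigcup_{u' \in W_K^J} B u' P_J$, use that $\ell(v_0 u') = \ell(v_0) + \ell(u')$ to place $b_0 v_0 B u' P_J$ inside the single Bruhat cell $B v_0 u' P_J$, and conclude via the cell decomposition of $X^J(w)$ that each such piece lies entirely inside or entirely outside $X^J(w)$ according to whether $v_0 u' \leq w$. The "obstacle" you flag in your last paragraph is resolved by exactly this containment-in-a-single-cell observation, which you have already established, so no extra input about compatibility of closures is needed.
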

\begin{proof}
    Using the parabolic decomposition of $W_K$, we get that
    \begin{equation*}
        P_K = B W_K B = \bigcup_{u' \in W_K^J} B u' P_J.
    \end{equation*}
    So the fibre of $G \arr G/P_K$ over $[b_0 v_0]$ is
    \begin{equation*}
        b_0 v_0 P_K = \bigcup_{u' \in W_K^J} b_0 v_0 B u' P_J,
    \end{equation*}
    Since $\ell(v_0 u') = \ell(v_0) + \ell(u')$, we have that $b_0 v_0 B u' P_J$ is a subset
    of $B v_0 u' P_J$.  The image of the set $B v_0 u' P_J$ under the projection $G \arr G/P_J$
    lies outside of $X^J(w)$ unless $v_0 u' \leq w$, in which case the image is
    contained in $X^J(w)$.
\end{proof}

Lemma \ref{L:fibre} yields the following intermediate criterion for BP decompositions.
\begin{prop}\label{P:bpfibre}
    Let $w = vu\in W^J$ be a parabolic decomposition with respect to $K$. Then
    $w = vu$ is a BP decomposition with respect to $(J,K)$ if and only if the fibres of
    the projection $\pi:X^J(w) \arr X^K(v)$ are equidimensional.
\end{prop}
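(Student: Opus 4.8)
\textbf{Proposal for the proof of Proposition \ref{P:bpfibre}.}

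The plan is to compute the dimension of the fibre of $\pi$ over an arbitrary point $[b_0 v_0]$ with $v_0 \in [e,v]\cap W^K$ using Lemma \ref{L:fibre}, and then compare the generic case $v_0 = v$ with the general case. From Lemma \ref{L:fibre}, the fibre $\pi^{-1}([b_0 v_0])$ is a translate of the union $\bigcup Bu'P_J/P_J$ over those $u' \in W^J_K$ with $v_0 u' \leq w$, so its dimension equals $\max\{\ell(u') : u'\in W^J_K,\ v_0 u' \leq w\}$. Call this number $d(v_0)$. The generic fibre is $X^J(u)$, which has dimension $\ell(u)$, and indeed $d(v) = \ell(u)$: since $v u = w$ is a parabolic decomposition we have $vu \le w$, giving $d(v)\ge \ell(u)$, while any $u'\in W^J_K$ with $vu'\le w$ satisfies $u'\le u$ by uniqueness of the parabolic decomposition (or directly by a length/Bruhat comparison), so $d(v)=\ell(u)$. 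The first step, then, is to record that equidimensionality of the fibres is equivalent to $d(v_0) = \ell(u)$ for every $v_0\in [e,v]\cap W^K$; note $d(v_0)\le \ell(u)$ always fails in general but $d(v_0)$ could a priori be smaller, so equidimensionality is the statement $d(v_0)\ge \ell(u)$, i.e. $v_0 u \le w$, for all such $v_0$.

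The second step is to translate the condition ``$v_0 u \le w$ for all $v_0\in[e,v]\cap W^K$'' into one of the combinatorial characterizations of Proposition \ref{P:bplemma}. The natural target is part (c): $u$ is the maximal element of $[e,w]\cap W^J_K$. For the forward direction, suppose the fibres are equidimensional, so in particular $v_0 u \le w$ for all $v_0 \le v$ in $W^K$; I would like to conclude $u = \bar u$, the maximal element of $[e,w]\cap W^J_K$ from Lemma \ref{L:maxexist}. By that lemma, $\bar u$ has a reduced decomposition $\bar v u$ with $\bar v \in [e,v]\cap W_K$; writing $\bar v = v_0 t$ with $v_0 \in W^K$ and $t\in W_K\cap W^{?}$ appropriately and using that $\bar v u$ is reduced, one extracts that $\ell(\bar u) = \ell(\bar v)+\ell(u)$, and if $\bar u \ne u$ then $\bar v \ne e$; the point is that the ``missing'' dimension must show up as a drop in some fibre. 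More precisely, if $u < \bar u$ then, as in the proof of Proposition \ref{P:bplemma} (c)$\Leftrightarrow$(d), there is $s \in S(v)\cap K$ with $u < su \le \bar u$ and $su\in W^J$; but this $s\in S(v)\cap K \subseteq D^J_L(u)$ would be forced, contradicting $u<su$ in $W^J$ — so actually I expect the cleanest route is to run the fibre-dimension computation at the specific point coming from $\bar v$ and show $d$ drops there unless $u=\bar u$. For the reverse direction, if $u$ is maximal in $[e,w]\cap W^J_K$ then for any $v_0\le v$ in $W^K$ the element $v_0 u$ lies in $W^J$ (since $\ell(v_0 u)=\ell(v_0)+\ell(u)$ and $u\in W^J_K$, $v_0\in W^K$ implies $v_0 u \in W^J$) and one must check $v_0 u \le w$; this should follow from the fact that $v_0 \le v$ and $u\le u$ together with the subword characterization of Bruhat order, using a reduced word for $w = vu$ obtained by concatenating reduced words for $v$ and $u$.

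The step I expect to be the main obstacle is the forward direction — deducing from mere equidimensionality that $u$ is maximal in $[e,w]\cap W^J_K$ — because a priori equidimensionality only gives information about the dimensions $d(v_0) = \max\{\ell(u') : v_0 u' \le w\}$, not about which $u'$ achieve the max, and one has to rule out the possibility that some larger $\bar u$ is reachable from a smaller $v_0$ while $u$ itself is not reachable from $v$. The resolution should be to use Lemma \ref{L:maxexist} structurally: the reduced decomposition $\bar u = \bar v u$ with $\bar v \in [e,v]\cap W_K$ means that if $u \ne \bar u$, then taking the point of $X^K(v)$ indexed by the $W^K$-part of $\bar v$ (say $\bar v = v_0' t'$, $v_0'\in W^K\cap W_K$, $t'\in W_J$, so $v_0' \le v$) gives a fibre of dimension $\ge \ell(v_0' u'')$ for the relevant $u''$, hence $> \ell(u)$ if the bookkeeping is set up right — contradicting equidimensionality in the \emph{other} direction (some fibre too \emph{large}). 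Either the generic fibre is the biggest or it isn't; Lemma \ref{L:fibre} plus upper-semicontinuity of fibre dimension pins down the generic fibre as the one of \emph{minimal} dimension, and then equidimensionality forces every fibre, including the one detecting $\bar u$, to have dimension exactly $\ell(u)$, which forces $\bar u = u$. I would write this out carefully, as the indexing of points of $X^K(v)$ by $W^K$ versus the appearance of $W_K$-elements in Lemma \ref{L:maxexist} is exactly where sign errors would creep in.
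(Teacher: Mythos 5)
Your overall strategy --- compute fibre dimensions via Lemma \ref{L:fibre} and reduce to Proposition \ref{P:bplemma}(c) --- is the right one, but your setup in the first two paragraphs has the inequality backwards, and this is a genuine error rather than a cosmetic one. For every $v_0 \in [e,v]\cap W^K$ one automatically has $v_0 u \leq w$: concatenating reduced words for $v$ and $u$ gives a reduced word for $w$, and $v_0\le v$, so $v_0u$ is represented by a subword of it. Hence $d(v_0)\ge \ell(u)$ always holds and carries no information; fibres can never be too small, only too large (the generic fibre is the minimal one). Consequently your claim that ``equidimensionality is the statement $d(v_0)\ge\ell(u)$, i.e. $v_0u\le w$, for all such $v_0$'' characterizes nothing, and the ``forward direction'' built on it in your second paragraph cannot go through as stated: you would be deriving maximality of $u$ from a vacuous hypothesis. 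The real content of equidimensionality is an upper bound at a single point: by Lemma \ref{L:fibre} the fibre over the base point $[e]$ is $\bigcup Bu'P_J/P_J$ over $u'\in W^J_K$ with $u'\le w$, which is $X^J(\bar u)$ of dimension $\ell(\bar u)$ by Lemma \ref{L:maxexist}, while the fibre over $[v]$ is $vX^J(u)$ of dimension $\ell(u)$; equidimensionality forces $\ell(\bar u)=\ell(u)$, and since $u\le\bar u$ this gives $u=\bar u$, which is Proposition \ref{P:bplemma}(c). This is exactly what your last paragraph gropes toward, but none of the bookkeeping with ``the $W^K$-part of $\bar v$'' is needed: the element $\bar v$ of Lemma \ref{L:maxexist} lies in $W_K$, so the relevant point of $X^K(v)$ is simply $[e]$, and no semicontinuity argument is required --- one compares the two explicit fibres $\pi^{-1}([e])$ and $\pi^{-1}([v])$, which is precisely the paper's proof.

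For the converse (BP implies equidimensional), your sketch supplies only the lower bound $d(v_0)\ge\ell(u)$, which as noted is automatic. The bound that actually uses the BP hypothesis is $d(v_0)\le \ell(u)$: if $v_0u'\le w$ with $u'\in W^J_K$, then $u'\le v_0u'\le w$, so $u'\le u$ by maximality of $u$ in $[e,w]\cap W^J_K$. With both bounds every fibre has dimension exactly $\ell(u)$ (indeed it is a translate $b_0v_0X^J(u)$), as in the paper. So all the ingredients appear in your proposal, but the logical frame must be flipped: equidimensionality rules out oversized special fibres, not undersized ones.
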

\begin{proof}
    Suppose $w = vu$ is a BP decomposition and take $b_0 \in B$, $v_0 \in [e,v]
    \cap W^K$. Then the fibre $\pi^{-1}([b_0 v_0]) =b_0 v_0 X^J(u)$, since if
    $u' \in [e,w] \cap W^J_K$ then $u' \leq u$ by Proposition \ref{P:bplemma} part (c).

    Conversely, the fibre $\pi^{-1}([e])=X^J(\bar u)$, where $\bar u$ is the
    maximal element of $[e,w] \cap W_K^J$, while the fibre
    $\pi^{-1}([v])=vX^J(u)$. Hence if $\pi:X^J(w) \arr X^K(v)$ is
    equidimensional, then we must have $\ell(u) = \ell(\bar u)$. But $u \leq
    \bar u$, so this implies $u = \bar u$.
\end{proof}

To finish the proof of Theorem \ref{T:bp1}, we need two standard lemmas.
\begin{lemma}\label{L:parabolicsub}
    Let $v \in W^K$ and $I=S(v)$. Let $G_I$ be the reductive subgroup of $P_I$,
    and let $P_{I,I \cap K}:=G_I \cap P_K$ be the parabolic subgroup of $G_I$
    generated by $I \cap K$.  Finally, let $X_I^{I \cap K}(v)\subseteq G_I
    / P_{I,I \cap K}$ be the Schubert variety indexed by $v\in W^K_I$.  Then
    the map $G_I / P_{I,I \cap K} \incl G/P_K$ induces an isomorphism $X_I^{I
    \cap K}(v) \arr X^K(v)$.
\end{lemma}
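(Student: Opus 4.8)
The plan is to show that the natural inclusion $\phi:G_I/P_{I,I\cap K}\arr G/P_K$ identifies the two Schubert varieties cell by cell, and then to upgrade this set-theoretic bijection to an isomorphism of varieties using that $\phi$ is a locally closed immersion. For the combinatorial bookkeeping, first note that since $v\in W_I$ with $I=S(v)$, the subword property of Bruhat order gives $[e,v]\subseteq W_I$. Any $x\in W_I$ has all of its right descents in $I$, so $x$ is the minimal-length representative of $xW_K$ if and only if it is the minimal-length representative of $xW_{I\cap K}$; combined with the standard fact $W_I\cap W_K=W_{I\cap K}$, this shows $[e,v]\cap W^K=[e,v]\cap W^{I\cap K}_I$, and in particular $v\in W^{I\cap K}_I$ so that $X_I^{I\cap K}(v)$ makes sense. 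Thus $X^K(v)=\coprod_x BxP_K/P_K$ and $X_I^{I\cap K}(v)=\coprod_x B_IxP_{I,I\cap K}/P_{I,I\cap K}$, where $B_I:=B\cap G_I$, with $x$ running over the same index set $[e,v]\cap W^K$ in both decompositions.

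Next, the map $\phi$. The group $G_I$ acts on $G/P_K$, and the stabilizer of the base point $eP_K$ is $G_I\cap P_K=P_{I,I\cap K}$, the standard parabolic of the reductive group $G_I$ attached to $I\cap K$. As $P_{I,I\cap K}$ is smooth, the orbit map identifies $G_I/P_{I,I\cap K}$ with the $G_I$-orbit of $eP_K$, which is locally closed in $G/P_K$, so that $\phi$ is a locally closed immersion. (All of this takes place inside a suitable finite-dimensional Schubert subvariety of $G/P_K$, so the ind-variety structure is harmless.) For $x\in W_I$, the Schubert cell $B_IxP_{I,I\cap K}/P_{I,I\cap K}$ is parametrized by the product of the root subgroups $U_\alpha$ over the inversions of $x$; since $W_I$ sends every positive root lying outside its own root subsystem to a positive root, all these inversions already lie in the root subsystem spanned by $I$, and the identical parametrization describes $BxP_K/P_K$ inside $G/P_K$. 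Hence $\phi$ carries $B_IxP_{I,I\cap K}/P_{I,I\cap K}$ isomorphically onto $BxP_K/P_K$ for every $x\in W_I$.

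Combining the two steps, $\phi$ restricts to a morphism $X_I^{I\cap K}(v)\arr X^K(v)$ that is a bijection on points and an isomorphism on every Schubert cell, in particular on the dense cell $BvP_K/P_K$. Since $\phi$ is a locally closed immersion and its source is a complete variety, the image is a reduced closed subscheme of $G/P_K$ whose underlying set is $X^K(v)$, hence equals $X^K(v)$, and $\phi$ restricts to the desired isomorphism. (Alternatively, one can bypass the scheme-theoretic stabilizer computation: $\phi$ is a proper birational morphism onto $X^K(v)$, being an isomorphism over the dense cell, and Schubert varieties are normal, so Zariski's main theorem yields the isomorphism.)

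The genuinely routine parts are the combinatorics of the first paragraph and the standard root-subgroup description of Schubert cells. The one point deserving care is the passage from "bijective morphism" to "isomorphism" in the last step — equivalently, knowing that $G_I\cap P_K$ is exactly the smooth parabolic $P_{I,I\cap K}$ so that the orbit map $G_I\arr G/P_K$ is separable with reduced stabilizer (or, if one prefers, invoking normality of Schubert varieties). I expect this to be the only real obstacle, and a minor one.
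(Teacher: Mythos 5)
Your proof is correct, and it reaches the same conclusion via a related but noticeably different technical route. The paper's proof reduces to surjectivity (taking for granted that $G_I/P_{I,I\cap K}\hookrightarrow G/P_K$ restricts to a closed immersion on Schubert varieties) and then proves cell-by-cell surjectivity by a one-line factorization: writing $P_I = G_I N_I$ with $N_I$ the unipotent radical and $B = B_I N_I$, and observing that $v'^{-1}N_I v' \subseteq P_K$ for $v'\in W_I$, one gets $Bv'P_K = B_I N_I v'P_K = B_I v' P_K$ directly. You instead establish the cell matching by comparing the root-subgroup parametrizations of the two cells and invoking the fact that $W_I$ permutes the positive roots outside the $I$-subsystem, which is equivalent in content to the paper's conjugation-stability of $N_I$ but packaged differently. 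You are also more explicit about the scheme-theoretic bookkeeping (orbit map and smooth stabilizer giving a locally closed immersion, completeness of the source, or normality plus Zariski's main theorem); the paper suppresses these and leans on the standard identification of $G_I/P_{I,I\cap K}$ with a locally closed $G_I$-orbit. Your opening combinatorial paragraph (checking $[e,v]\cap W^K = [e,v]\cap W^{I\cap K}_I$) is worth having since the paper leaves it implicit. The one point to be aware of: if one wants the statement in arbitrary characteristic for semisimple $G$, the "smooth stabilizer" remark is exactly the right thing to say (parabolics are smooth), but the paper's argument sidesteps even that by working with the explicit factorization $B = B_I N_I$ and never appealing to orbit-map generalities; both are fine.
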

\begin{proof}
    It suffices to show that the induced map $X_I^{I \cap K}(v) \incl X^K(v)$
    is surjective.  Write $P_I=G_IN_I$ where $N_I$ is the unipotent subgroup of
    $P_I$ and let $B_I=G_I\cap B$ denote the Borel of $G_I.$ If $v'\in W_I$, then
    $N_I$ is stable under conjugation by $v'^{-1}$.  Write $B=B_IN_I$. Then for
    any $v'\in W_I$, the Schubert cell
    \begin{equation*}
        Bv'P_K/P_K=B_IN_Iv'P_K/P_K=B_I(v'v'^{-1})N_Iv'P_K/P_K=B_Iv'P_K/P_K.
    \end{equation*}
    Since $v\in W^K_I\subseteq W_I$, we have that  $X_I^{I \cap K}(v) \incl X^K(v)$ is surjective.
\end{proof}

\begin{lemma}\label{L:schubertaction}
    If $u \in W^J$, then $X^J(u)$ is closed under the action of $P_{D^J_L(u)}$, the parabolic
    subgroup generated by the left descent set $D^J_L(u)$ of $u$ relative to $J$.
\end{lemma}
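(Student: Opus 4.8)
The plan is to reduce the statement to a claim about single simple reflections $s\in D^J_L(u)$ and then build up the full parabolic $P_{D^J_L(u)}$ by using that $X^J(u)$ is closed. Concretely, since $P_{D^J_L(u)}$ is generated as a group by $B$ together with the minimal parabolics $P_s=B\cup BsB$ for $s\in D^J_L(u)$, and since $X^J(u)=\overline{BuP_J}/P_J$ is automatically $B$-stable (being a $B$-orbit closure), it suffices to show that $P_s\cdot X^J(u)\subseteq X^J(u)$ for each $s\in D^J_L(u)$. Multiplication by $P_s$ is a continuous (indeed proper, using that $P_s/B\cong \mathbb P^1$) operation sending closed $B$-stable subsets to closed $B$-stable subsets, so $P_s\cdot X^J(u)$ is the closure of $P_s\cdot BuP_J/P_J$ inside $G/P_J$; hence it is enough to check that $P_s\cdot BuP_J/P_J\subseteq X^J(u)$, i.e. that $BsB\cdot BuP_J\subseteq \overline{BuP_J}$.

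For this, first I would note that $s\in D^J_L(u)$ means $suW_J\le_J uW_J$ in the relative Bruhat order, which (since $u\in W^J$) is equivalent to saying that the minimal coset representative of $suW_J$ is $\le u$. There are two cases. If $\ell(su)=\ell(u)-1$, then $su\in W^J$ and $su\le u$, and the cell multiplication relation $BsB\cdot BuB = BsuB\cup BuB$ together with passing to $G/P_J$ gives $BsB\cdot BuP_J = BsuP_J\cup BuP_J\subseteq X^J(u)$. If instead $\ell(su)=\ell(u)+1$, then $su\notin W^J$; writing $su = u_1 t$ with $u_1\in W^J$, $t\in W_J$, the condition $suW_J\le_J uW_J$ forces $u_1\le u$, and in fact (as in the argument of Lemma~\ref{L:maxexist}) $u_1=u$ and $\ell(t)=1$, so $su=ut$ with $t\in J$. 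Then $BsB\cdot BuB\subseteq BuB\cup BsuB = BuB\cup ButB$, and since $t\in W_J$ we have $ButP_J = BuP_J$; thus $BsB\cdot BuP_J\subseteq BuP_J\subseteq X^J(u)$ in this case as well.

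Having verified $P_s\cdot X^J(u)\subseteq X^J(u)$ for every simple generator $s\in D^J_L(u)$, the subset of $G$ that preserves $X^J(u)$ under left multiplication is a closed subgroup containing $B$ and all such $P_s$, hence contains the group they generate, which is exactly $P_{D^J_L(u)}$. This proves the lemma. The main obstacle is the second case above ($\ell(su)=\ell(u)+1$): one must argue carefully that the relative descent condition $s\in D^J_L(u)$ really does force $su=ut$ with $t\in J$ rather than $su$ being an arbitrary element with small minimal coset representative — but this is precisely the combinatorial fact already extracted in the proof of Lemma~\ref{L:maxexist}, so it can be quoted rather than reproved. A minor technical point to handle cleanly is the claim that left multiplication by $P_s$ carries closures to closures; this follows from properness of the map $P_s\times^B X^J(u)\to G/P_J$, or can be sidestepped entirely by working cell-by-cell with the Bruhat decomposition as above and then taking closures at the end.
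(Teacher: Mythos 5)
Your proof is correct, but it takes a somewhat different route from the paper's. The paper works entirely upstairs in $G$ with the preimage $Z=\bigcup_{u'\leq u} Bu'W_JB$ of $X^J(u)$: since $Z$ is already a union of double cosets, no closure or properness considerations arise, and the only combinatorial input is the lifting property of the relative Bruhat order (if $s\in D^J_L(u)$ and $u'W_J\leq_J uW_J$, then $su'W_J\leq_J uW_J$), applied to \emph{every} $u'\leq u$; stability of $Z$ under each $s$ then follows from $sBu'W_JB\subseteq Bsu'W_JB\cup Bu'W_JB$. You instead reduce to the single dense cell $BuP_J/P_J$ via the observation that left translation by $P_s$ carries $X^J(u)=\overline{BuP_J/P_J}$ into $\overline{P_s\cdot BuP_J/P_J}$, and then handle only the element $u$ itself by the case analysis $\ell(su)=\ell(u)\mp 1$, quoting the dichotomy from the proof of Lemma~\ref{L:maxexist} (either $su\in W^J$ with $su<u$, or $su=ut$ with $t\in J$) to rule out the problematic case; your two cases do exhaust $s\in D^J_L(u)$, and the containments $BsB\cdot BuP_J\subseteq BsuP_J\cup BuP_J$ and $ButP_J=BuP_J$ are used correctly. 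What each approach buys: the paper's argument needs no topology beyond the fact that $Z$ is a union of Bruhat cells, but it silently invokes the quotient Bruhat-order lifting property for arbitrary $u'\leq u$; yours needs only the descent analysis at $u$ (which you can cite from Lemma~\ref{L:maxexist}) at the cost of the closure step, which, as you note, requires no properness since one only needs $P_s\cdot\overline{A}\subseteq\overline{P_s\cdot A}$, valid because each individual translation is a homeomorphism. Both proofs conclude identically via the fact that $P_{D^J_L(u)}$ is generated by $B$ and the minimal parabolics $P_s$, $s\in D^J_L(u)$.
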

\begin{proof}
    Let $Z$ be the inverse image of $X^J(u)$ under the projection $G\arr G/P_J$. Then
    \begin{equation*}
        Z = \bigcup_{u' \leq u} B u' BW_JB=\bigcup_{u' \leq u} B u'W_J B.
    \end{equation*}
    If $s \in D^J_L(u)$ and $u'W_J\leq_J uW_J$,  then $su'W_J \leq_J uW_J$. Thus
    \begin{equation*}
        s B u'W_J B\subseteq B su'W_J B \cup B u'W_J B \subseteq Z.
    \end{equation*}
    So $Z$ is closed under $D^J_L(u)$, and therefore $X^J(u)$ is closed under $P_{D^J_L(u)}$.
\end{proof}

\begin{proof}[Proof of Theorem \ref{T:bp1}]
    If $\pi : X^J(w) \arr X^K(v)$ is locally-trivial then the fibres of $\pi$ are
    equidimensional.  Thus $w = vu$ is a BP decomposition with respect to
    $(J,K)$ by Proposition \ref{P:bpfibre}.

    Conversely, suppose that $w = vu$ is a BP decomposition with respect to
    $(J,K)$, and let $I = S(v)$ as in Lemma \ref{L:parabolicsub}.  Recall from
    the proof of Proposition \ref{P:bpfibre} that if $b_0 \in B$, $v_0 \in
    [e,v] \cap W^K$, then the fibre $\pi^{-1}([b_0 v_0])=b_0 v_0 X^J(u)$.
    Suppose $g \in G_I$ maps to $[b_0 v_0]$ in $G/P_K$.  Then we can write $g =
    b_1 v_0 p$, where $b_1 \in B_I$ and $p \in P_{I,I \cap K}$. By Proposition
    \ref{P:bplemma} part (d), $I \cap K \subseteq D^J_L(u)$.  Hence by Lemma
    \ref{L:schubertaction}, $p X^J(u) = X^J(u)$, and we conclude that $g X^J(u)
    = b_1 v_0 X^J(u)$ is the fibre of $\pi$ over $[g] = [b_1 v_0]$.

    By \cite[Corollary 7.4.15 and Exercise 7.4.5]{Ku02} the projection $G_I \arr G_I / P_{I,I\cap K}$ is locally trivial, and
    thus has local sections. Given $x \in X^K(v)$, there is a Zariski open
    neighbourhood $U_x \subseteq X^K(v)$ of $x$ with a local section $s : U_x \arr
    G_I \subseteq G$ of the projection $G \arr G / P_K$. Let $m : U_x \times X^J(u)
    \arr G / P_J$ denote the multiplication map $(u,y) \mapsto s(u)\cdot y$.
    The image of $m$ is contained in $X^J(w)$, and thus we get a commuting square
    \begin{equation*}
        \xymatrix{ U_x \times X^J(u) \ar[r]^{m} \ar[d] & X^J(w) \ar[d]^{\pi} \\
                    U_x\ \ \ar @{^{(}->}[r] & X^K(v) \\ }
    \end{equation*}
    in which the fibres of projection $U_x \times X^J(u) \arr U_x$ are mapped
    bijectively onto the fibres of $\pi$. If $z \in \pi^{-1}(U_x)$ and $g = s(\pi(z))$, then $z \in g X^J(u)$. So $m$ maps bijectively onto $\pi^{-1}(U_x)$, and we can define an inverse $\pi^{-1}(U_x) \arr U_x \times
    X^J(u)$ by $z \mapsto (\pi(z), g^{-1} z)$ where $g = s(\pi(z))$. We conclude that $m$ is
    an isomorphism, and ultimately that $\pi:X^J(w) \arr X^K(v)$ is locally
    trivial.

    Now the projection $U_x \times X^J(u) \arr U_x$ is smooth if and only if
    $X^J(u)$ is smooth, and thus the projection $\pi:X^J(w) \arr X^K(v)$ is smooth
    if and only if $X^J(u)$ is smooth. In particular, if $X^J(u)$ and $X^K(v)$
    are both smooth, then $X^J(w)$ is smooth. Conversely, if $X^J(w)$ is smooth
    then the product $U_x \times X^J(u)$ must be smooth whenever the projection
    $G_I \arr X^K(v)$ has a local section over $U$. Looking at Zariski tangent
    spaces, we conclude that $\dim T_x X^K(v) + \dim T_y X^J(u) \leq \ell(w)$
    for all $x \in X^K(v)$, $y \in X^J(u)$. Since $\ell(w) = \ell(u) +
    \ell(v)$, both $X^K(v)$ and $X^J(u)$ must be smooth.

    The Schubert variety $X^J(w)$ is rationally smooth if and only if $X^J(u)$ and $X^K(v)$
    are rationally smooth by Lemma \ref{L:ratsmooth}.
\end{proof}

We finish the section by proving Corollary \ref{C:bpcoh}.
\begin{proof}[Proof of Corollary \ref{C:bpcoh}]
    For singular cohomology, the proof follows easily from the Leray-Hirsch
    theorem. For etale cohomology, we use the Leray-Serre spectral sequence
    \begin{equation*}
        E_2^{*,*}=  H^r_{et}\left(X^K(v), R^s\pi_* \Q_l\right) \Longrightarrow
            H^{r+s}_{et}\left(X^J(w), \Q_l\right)
    \end{equation*}
    for the projection $\pi : X^J(w) \arr X^K(v)$ (see, e.g. \cite{Ta94}).
    Since $\pi$ is locally trivial, the sheaf $R^* \pi_* \Q_l$ is locally
    constant, and by the proper base change theorem we see that it is in fact
    isomorphic to $H^*_{et}(X^J(u), \Q_l)$. Since
    $H^*_{et}(X^K(v), \Q_l)$ and $H^*_{et}(X^J(u), \Q_l)$ are
    concentrated in even dimensions, the Leray-Serre spectral sequence
    collapses at the $E_2$-term, and the spectral sequence converges to
    $H^*_{et}(X^K(v)) \otimes H^*_{et}(X^J(u))$ as an algebra.  Using the
    action of $H^*_{et}(X^K(v))$ on $H^*_{et}(X^J(w))$, we can
    solve the lifting problem to get an isomorphism
    \begin{equation*}
        H^*_{et}\left(X^K(v),\Q_l\right) \otimes H^*_{et}\left(X^J(u), \Q_l\right)
            \arr H^*_{et}\left(X^J(w), \Q_l\right)
    \end{equation*}
    of $H^*_{et}(X^K(v))$-modules.
\end{proof}

\section{Rationally smooth Grassmannian Schubert varieties}\label{S:bpexist}

In this section we define almost maximal elements of a Weyl group and prove
Theorem \ref{T:ratgrass}.  We take $G$ to a be simple Lie group of finite type,
and hence the Weyl group $W$ is a finite Coxeter group. It is
well known that simple Lie groups are classified into four classical families
$A_n,B_n,C_n,D_n$ and exceptional types $E_6,E_7,E_8,F_4$ and $G_2$. We begin
with the following theorem.
\begin{thm}[\cite{Bi98}, \cite{Ga98}, \cite{La98}, \cite{BP05}, \cite{OY10}]\label{T:leafbp}
    Let $X^{\emptyset}(w)$ be rationally smooth Schubert variety with $|S(w)| \geq 2$. Then there is a
    leaf $s \in S(w)$ of the Dynkin diagram of $W_{S(w)}$ such that either $w$ or
    $w^{-1}$ has a BP decomposition $vu$ with respect to $J = S \setminus \{s\}$.

    Furthermore, $s$ can be chosen so that $v$ is either the maximal length
    element in $W^J$, or one of the following holds:
    \begin{enumerate}[(a)]
        \item $W_{S(v)}$ is of type $B_n$ or $C_n$, with either
        \begin{enumerate}[(1)]
            \item $J = S \setminus \{s_1\}$, and $v = s_ks_{k+2} \cdots s_n s_{n-1} \cdots s_1$, for some
                $1 < k \leq n$.
            \item $J = S \setminus \{s_n\}$ with $n\geq 2,$ and $v = s_1 \cdots s_n$.
        \end{enumerate}
        \item $W_{S(v)}$ is of type $F_4$, with either
        \begin{enumerate}[(1)]
            \item $J = S \setminus \{s_1\}$ and $v = s_4 s_3 s_2 s_1$.
            \item $J = S \setminus \{s_4\}$ and $v = s_1 s_2 s_3 s_4$.
        \end{enumerate}
        \item $W_{S(v)}$ is of type $G_2$, and $v$ is one of the elements
            \begin{equation*}
                s_2s_1,\ s_1s_2s_1,\ s_2s_1s_2s_1,\ s_1s_2,\ s_2s_1s_2,\ s_1s_2s_1s_2.
            \end{equation*}
    \end{enumerate}
\end{thm}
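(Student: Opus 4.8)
The plan is to obtain this by combining the main structural theorem of Billey-Postnikov \cite{BP05} with the type~$A$ Poincar\'{e} polynomial factorizations of \cite{Ga98, La98, Bi98} and the notational dictionary of \cite{OY10}, after one preliminary reduction. First I would reduce to the case in which the Dynkin diagram of $W$ is connected and $S(w) = S$: every reduced word for $w$ uses only reflections from $S(w)$, so by Lemma \ref{L:parabolicsub} the variety $X^{\emptyset}(w)$ is isomorphic to the Schubert variety indexed by $w$ in the full flag variety of the Levi subgroup $G_{S(w)}$, whose Weyl group is $W_{S(w)}$; this changes neither rational smoothness, nor Bruhat order, nor the relevant Poincar\'{e} polynomials, and since $W$ is finite we may treat each irreducible component of $W_{S(w)}$ separately. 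So assume $S$ connected, $S(w) = S$, and $|S| \geq 2$.

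The first assertion --- the existence of a leaf $s$ and a parabolic decomposition $w = vu$, or $w^{-1} = vu$, with respect to $J = S \setminus \{s\}$ satisfying $P_w^{\emptyset}(t) = P_v^J(t)\, P_u^{\emptyset}(t)$ --- is then the content of \cite{BP05}; in type~$A$ this factorization is due to Gasharov and Lascoux (see \cite{Ga98, La98}, and also \cite{Bi98}), and the identification of such a factorization with the notion of BP decomposition in Definition \ref{D:BP} is \cite{OY10}. The passage between $w$ and $w^{-1}$ is genuinely necessary here, since it is the Bruhat interval, not the Schubert variety, that is symmetric under inversion. Given the factorization, Lemma \ref{L:ratsmooth} (which applies because $W$ is crystallographic), together with the Carrell-Peterson criterion \cite{Ca94}, shows that $P_v^J(t)$ is palindromic, so that $X^J(v)$ is itself rationally smooth.

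It remains to pin down $v$. Since $v \in W^{S \setminus \{s\}}$ is Grassmannian, I would apply Lemma \ref{L:parabolicsub} once more to reduce to $S(v) = S$ with $s$ still a leaf, so that $X^J(v)$ is a rationally smooth Grassmannian Schubert variety of full support whose unique descent is a leaf; one then shows the only possibilities are the maximal element of $W^J$ and the explicit elements in (a)--(c). When $W$ is simply-laced, or when $W$ has type~$A$ and $J$ deletes a leaf --- so that $G/P_J$ is a projective space --- a direct check shows the maximal element is the only full-support rationally smooth Grassmannian Schubert variety, which is why these types contribute nothing to the list. In the non-simply-laced types $B_n$, $C_n$, $F_4$, $G_2$ the finite list is produced by a hands-on analysis: the $B_n$ and $C_n$ families are handled simultaneously using the duality between these root systems, and $F_4$ and $G_2$ reduce to finite computations. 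This classification is part of the analysis in \cite{BP05}; it is also subsumed by our later Theorem \ref{T:ratgrass}, whose proof invokes the present theorem, but since \cite{BP05} establishes what is needed here directly, no circularity arises. The main obstacle is exactly this non-simply-laced case analysis; the remaining work is to verify that the cited results, phrased in terms of root-subsystem patterns and parabolic factorizations, do yield precisely the statement above.
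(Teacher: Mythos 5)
Your proposal is correct and follows essentially the same route as the paper, which does not give an independent proof of Theorem \ref{T:leafbp} but, exactly as you do, assembles it from the literature: the leaf BP decomposition and the explicit classical-type elements from Billey \cite{Bi98} (with the type $A$ case also in \cite{Ga98,La98}), the exceptional-type existence from the computer verification of \cite{BP05}, and the exceptional lists of possible $v$ from the smallness of $W^{S\setminus\{s\}}$ together with \cite{OY10}, glued by the reduction to full support (Lemma \ref{L:parabolicsub}) and the palindromicity of the factor $P_v^J$ (Lemma \ref{L:ratsmooth} and \cite{Ca94}). Your observation that one must not invoke Theorem \ref{T:ratgrass} here, since its proof uses the present theorem, is exactly the right precaution and matches the paper's logical structure.
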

Note that the elements listed in parts (a)-(c) of Theorem \ref{T:leafbp}
correspond to the elements listed in part (2) of Theorem \ref{T:ratgrass} for
which $s$ is a leaf of the Dynkin diagram.  The result that $w$ or $w^{-1}$ has
a BP decomposition with respect to a leaf is due to Billey \cite[Theorem 3.3
and Proposition 6.3]{Bi98} in the classical types and Billey-Postnikov
\cite{BP05} in the exceptional types.  The type $A$ case was also proved by
Gasharov \cite{Ga98} and Lascoux \cite{La98}.  For the second part of Theorem
\ref{T:leafbp} on the presentation of $v$, the proof for the classical types is
again due to Billey \cite{Bi98}.  The type $E$ case is due to Oh-Yoo in
\cite{OY10} \footnote{This also follows from the geometric results of
\cite{HM13} together with Peterson's theorem that all rationally smooth
elements in type $E$ are smooth.} and types $F_4$ and $G_2$ can be easily
verified by computer using Kumar's criteria for rational smoothness
\cite{Ku96}.  We remark that computer calculation plays an important role in
the proof of Theorem \ref{T:leafbp}.  In particular, the results of \cite{BP05}
and \cite{OY10} on the exceptional types both require exhaustive computer
verification.

Note that the condition that either $w$ or $w^{-1}$ has a BP decomposition in
Theorem \ref{T:leafbp} can be rephrased as $w$ having ``left" or ``right" sided
BP decompositions.  For any $J\subseteq S,$ let ${}^{J}W \simeq W_J\backslash
W$ denote the set of minimal length left sided coset representatives.  Any
$w\in W$ has unique \emph{left sided parabolic decomposition} $w=uv$ with
respect to $J$ where $u\in W_J$ and $v\in {}^{J}W.$  We say a left sided
parabolic decomposition $w=uv$ is a \emph{left sided BP decomposition} with
respect to $J$ if
\begin{equation*}
    P_w(t)=P_u(t)\cdot {}^{J}P_v(t)
\end{equation*}
where
\begin{equation*}
    {}^{J}P_v(t):=\sum_{x\in [e,v]\cap {}^{J}W}t^{\ell(x)}.
\end{equation*}
By a \emph{right sided parabolic or BP decomposition} $w=vu$ with respect to
$J,$ we simply mean a usual parabolic or BP decomposition where $v\in W^J$ and
$u\in W_J$. With this terminology, $w=vu$ is a right sided BP decomposition if
and only if $w^{-1}=u^{-1}v^{-1}$ is a left sided BP decomposition.  The
combinatorial characterizations given in Proposition \ref{P:bplemma} have left
sided BP decomposition analogues.  In particular, for Proposition
\ref{P:bplemma}, part (e) we let $u'$ be the maximal element of the coset
$(W_J)u$ and replace the left descents $D_L(u')$ with right descents $D_R(u')$.

The elements listed in Theorem \ref{T:leafbp} parts (a)-(c) share an important
property we term ``almost maximality". Recall that an element $w \in W$ is the
maximal element in $W_{S(w)}$ if and only if $D_L(w) = S(w)$, or equivalently
if $D_R(w) = S(w)$.  An element $w \in W^J$ is the maximal element in
$W^{S(w)\cap J}_{S(w)}$ if and only if the longest element of $w W_{S(w)\cap
J}$ is in fact the longest element of $W_{S(w)}$. Based on these properties
of maximal elements, we make the following definition.

\begin{defn}\label{D:almostmax}
    Given $w \in W^J$, let $w'$ be the longest element in $w W_{S(w)\cap J}$.
    We say that $w$ is \emph{almost-maximal} in $W^{S(w)\cap J}_{S(w)}$ if all of the following are true.
    \begin{enumerate}[(a)]
        \item There are elements $s,t \in S(w)$ (not necessarily distinct)
            such that
            \begin{equation*}
                D_R(w') = S(w') \setminus \{s\}\quad\text{and}\quad D_L(w')=S(w') \setminus \{t\}.
            \end{equation*}
        \item If $w' = vu$ is the right sided parabolic decomposition with respect to $D_R(w')$, then
            $S(v) = S(w')$.
        \item If $w' = uv$ is the left sided parabolic decomposition with respect to $D_L(w')$, then
            $S(v) = S(w')$.
    \end{enumerate}
    Similarly, if $w\in{}^{J}W$, we say $w$ is \emph{almost-maximal} in
    ${}^{S(w)\cap J}W_{S(w)}$ if parts (a)-(c) are true with $w'$ the longest
    element in the coset $(W_{S(w)\cap J})w$.
\end{defn}
Note that an almost-maximal element is not maximal in $W^{S(w)\cap J}_{S(w)}$
by definition.  If $W^{S(w)\cap J}_{S(w)}$ is clear from context, we will omit it.
By the following lemma, the most interesting case is when $J = S \setminus
\{s\}$ for some $s \in S$.

\begin{lemma}
    Let $w\in W^J$ and assume $J\subseteq S=S(w)$.  If $w$ is almost maximal,
    then there exists $s\notin J$ and parabolic decomposition $w=vu$ with respect
    to $K = S \setminus \{s\}$ such that $v$ is an almost-maximal element of
    $W^K$ and $u$ is the maximal element of $W^J_K$.
\end{lemma}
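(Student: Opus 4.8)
The plan is as follows. Let $u_0$ be the longest element of $W_J$ and let $w' = w u_0$, which is the longest element of the coset $w W_J$; since $S(w)=S\supseteq J$, this is exactly the element ``$w'$'' attached to $w$ in Definition~\ref{D:almostmax}. I would take $K = D_R(w')$. First note that $w' = w u_0$ is a reduced product, so $S(w') = S(w)\cup S(u_0) = S$, and hence by part~(a) of almost-maximality there are $s,t\in S$ with $D_R(w')=S\setminus\{s\}$ and $D_L(w')=S\setminus\{t\}$. Since $w\in W^J$, for each $r\in J$ we have $\ell(w'r)=\ell(w)+\ell(u_0r)=\ell(w')-1$, so $J\subseteq D_R(w')$; therefore $s\notin J$ and $J\subseteq K:=S\setminus\{s\}=D_R(w')$. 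Let $w=vu$ be the parabolic decomposition of $w$ with respect to $K$, so $v\in W^K$ and $u\in W^J_K$.

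Next I compare this with the parabolic decomposition of $w'$ with respect to $K$. As $u\in W^J$ and $u_0\in W_J$, the product $uu_0$ is reduced and lies in $W_K$, and $\ell(w')=\ell(v)+\ell(u)+\ell(u_0)=\ell(v)+\ell(uu_0)$; by uniqueness of parabolic decompositions, $w'=v(uu_0)$ is the parabolic decomposition of $w'$ with respect to $K$. Now I use the standard identity that for a parabolic decomposition $x=v'u'$ with respect to $K$ one has $D_R(u')=D_R(x)\cap K$; taking $x=w'$ and recalling $D_R(w')=K$, this forces $D_R(uu_0)=K$, hence $uu_0=w_{0,K}$, the longest element of $W_K$ (here $W_K$ is finite since $W$ is). Therefore $u=w_{0,K}u_0$ and $w'=v\,w_{0,K}$. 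Since $w_{0,K}u_0$ is the minimal-length representative of the Bruhat-maximal coset $w_{0,K}W_J$ of $W_K/W_J$, the element $u$ is the maximal element of $W^J_K$, which is the first conclusion.

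It remains to show $v$ is almost-maximal in $W^K$. Part~(b) of almost-maximality for $w$, applied to the decomposition $w'=v(uu_0)$, gives $S(v)=S(w')=S$; in particular $S(v)\cap K=K$, so the element attached to $v$ in Definition~\ref{D:almostmax} --- the longest element of $vW_{S(v)\cap K}=vW_K$ --- is exactly $v\,w_{0,K}=w'$. Consequently parts~(a), (b), (c) of almost-maximality for $v$ refer to the same element $w'$, the same descent sets $D_R(w')=K$ and $D_L(w')$, and the same right- and left-sided parabolic decompositions of $w'$ with respect to those sets, as the corresponding parts for $w$; since the latter hold by hypothesis, so do the former, and $v$ is almost-maximal in $W^K$.

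The one point that needs care is the identity $D_R(u')=D_R(x)\cap K$ for a parabolic decomposition $x=v'u'$ (together with finiteness of $W_K$), since this is what pins down $uu_0=w_{0,K}$ and thereby makes $w'$ the common ``longest coset element'' for both $w$ and $v$; the rest is bookkeeping with lengths and the uniqueness of parabolic decompositions.
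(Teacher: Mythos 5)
Your proposal is correct and takes essentially the same approach as the paper: set $K=D_R(w')$, observe that the $W_K$-component of $w'$ must be the longest element $w_{0,K}$ of $W_K$, and note that $v\,w_{0,K}=w'$ serves as the attached longest-coset element for both $w$ and $v$, so the three conditions of almost-maximality transfer verbatim. You make explicit the descent identity $D_R(u')=D_R(w')\cap K$ that the paper leaves implicit in the phrase ``$u'$ is maximal in $W_K$,'' but the underlying argument is the same.
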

\begin{proof}
    Assume that $w \in W^J$ is almost-maximal.  Let $w'$ be the longest element of $w
    W_J$.  Let $s$ be the unique element of $S\setminus
    D_R(w')$ and consider the parabolic decomposition $w' = v u'$ with respect
    to $K = S \setminus \{s\}=D_R(w')$.  Note that $S(v)=S(w')=S$ and $u'$ is
    maximal in $W_K$ since $J\subseteq K$. Write $u'=uu_0$ where $u$ and
    $u_0$ are maximal in $W^J_K$ and $W_J$ respectively.  Then
    $$w'=wu_0=v\underbrace{(uu_0)}_{u'}$$ which implies that $w=vu$.  Since $w$
    is almost-maximal and $w'$ is the maximal element of $vW_K$, we have that
    $v$ is almost maximal.
\end{proof}

\subsection{Proof of Theorem \ref{T:ratgrass}}

We begin with the following proposition.

\begin{prop}\label{P:almostmax}
    Let $J= S\setminus\{s\}$ for some $s$ and suppose $v\in W^J$ such that $S(v)=S$ and $X^J(v)$ is rationally smooth.  Then the following are equivalent.
    \begin{enumerate}[(i)]
        \item $v$ is not maximal in $W^J$
        \item $v$ is almost-maximal in $W^J$
        \item $v$ appears on the list in Theorem \ref{T:ratgrass} part (2).
    \end{enumerate}
\end{prop}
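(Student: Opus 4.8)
The plan is to establish the cycle of implications $(i) \Rightarrow (iii) \Rightarrow (ii) \Rightarrow (i)$; two of these are short. For $(ii) \Rightarrow (i)$: an almost-maximal element is never maximal, since part (a) of Definition \ref{D:almostmax} forces $D_R(w') \subsetneq S(w')$. For $(iii) \Rightarrow (ii)$: this is a finite verification. For each entry $v$ of the table in Theorem \ref{T:ratgrass}(2) one computes the longest element $v'$ of $vW_J$ (note $W_{S(v)\cap J} = W_J$ since $S(v)=S$), reads off $D_L(v')$ and $D_R(v')$, and checks that each is the complement of a single simple reflection and that the nontrivial factor of each of the two parabolic decompositions of $v'$ has full support $S$. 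For $B_n$ and $C_n$ this is a single computation in the parameter $k$ using the explicit reduced words; for $F_4$ and $G_2$ it is a direct check.

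The content is in $(i) \Rightarrow (iii)$. First I would pass to $G/B$: let $u_0$ be the longest element of $W_J$ and set $v' := vu_0$, the maximal-length representative of $vW_J$. Then $X^{\emptyset}(v')$ is the preimage of $X^J(v)$ under $G/B \to G/P_J$, hence rationally smooth, and $v' = vu_0$ is a BP decomposition with respect to $J$ by Theorem \ref{T:bp1} (the preimage map is a $P_J/B$-bundle). Since $S(v) = S$ we have $S(v') = S$, and since $v$ is not maximal in $W^J$ one checks that $D_R(v') = J = S\setminus\{s\}$: the whole of $J$ lies in $D_R(v')$ because $u_0$ is longest in $W_J$, while $s \in D_R(v')$ would force $v'$ to be the longest element of $W$, hence $v$ maximal. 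Now apply Theorem \ref{T:leafbp} to $v'$: there is a leaf $\sigma$ of the Dynkin diagram of $W$ such that $v'$ or $v'^{-1}$ has a BP decomposition $\tilde v\,\tilde u$ with respect to $K := S\setminus\{\sigma\}$, with $\tilde v$ either the longest element of $W^K$ or one of the elements listed in Theorem \ref{T:leafbp}(a)--(c). In the favourable case $\sigma = s$ with $v'$ (rather than $v'^{-1}$) carrying the decomposition, uniqueness of parabolic decompositions gives $\tilde v = v$ and $\tilde u = u_0$, so Theorem \ref{T:leafbp} places $v = \tilde v$ directly on the required list (it cannot be maximal, since $v$ is not), and one only has to match the lists in Theorem \ref{T:leafbp}(a)--(c) against the leaf rows of the table in Theorem \ref{T:ratgrass}.

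The main obstacle is the remaining case, in which the leaf $\sigma$ produced by Theorem \ref{T:leafbp} differs from $s$ — necessarily so when $s$ is an interior node of the Dynkin diagram, i.e.\ for the $B_n/C_n$ rows with $s = s_k$, $1 < k < n$, and the $F_4$ row with $s = s_3$ — or in which only $v'^{-1}$ carries a leaf BP decomposition. Here I would reconstruct $v'$, and hence $v$ as the minimal-length representative of $v'W_J$, from the triple $(\sigma, \tilde v, \tilde u)$: the element $\tilde v$ ranges over an explicit short list within each Coxeter type, while $\tilde u \in W_K$ is heavily constrained by the combinatorial characterization of BP decompositions in Proposition \ref{P:bplemma}(d),(g) (and its left-handed analogue, used when only $v'^{-1}$ decomposes), together with the fact that one of $D_R(v')$, $D_R(v'^{-1}) = D_L(v')$ is the co-atom $J$. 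Feeding this into the classification of finite Dynkin diagrams reduces the problem to a bounded case analysis in types $B_n$, $C_n$, $F_4$, $G_2$ — the simply-laced and $D_n$ cases producing only the maximal element, consistent with the absence of those types from the table — and in each case the resulting $v$ is identified with one of the tabulated elements. The associativity of BP decompositions (Lemma \ref{L:bpassoc}), used to compare the two factorizations of $v'$ coming from the preimage (with respect to $J$) and from Theorem \ref{T:leafbp} (with respect to $K$) after refining both through the common subset $J \cap K = S\setminus\{s,\sigma\}$, is the central technical tool, and carrying out this bookkeeping carefully is expected to be the most delicate part of the argument. This completes $(i) \Rightarrow (iii)$; together with $(iii) \Rightarrow (ii)$ and $(ii) \Rightarrow (i)$ it closes the cycle.
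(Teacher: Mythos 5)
Your overall skeleton — the cycle $(ii)\Rightarrow(i)\Rightarrow(iii)\Rightarrow(ii)$, passage to the longest coset representative (your $v' = vu_0$, the paper's $w'$), the computation $D_R(v') = J$, and the invocation of Theorem \ref{T:leafbp} — coincides with the paper's. The implications $(ii)\Rightarrow(i)$ and $(iii)\Rightarrow(ii)$ are fine; the latter is indeed a finite check, which the paper packages into Lemmas \ref{L:almostmax1}--\ref{L:almostmax3} together with \ref{L:almostmax0}, but this is the same content.

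The genuine gap is in $(i)\Rightarrow(iii)$. The "favourable case" you dispatch cleanly, but the remaining case — where the leaf BP decomposition from Theorem \ref{T:leafbp} attaches to $(v')^{-1}$ rather than $v'$ — is the crux, and you leave it as an unfinished plan. You propose to reconstruct $v'$ from $(\sigma, \tilde v, \tilde u)$ by combining Proposition \ref{P:bplemma} with Lemma \ref{L:bpassoc} and then "feeding this into the classification of Dynkin diagrams," but you never actually carry out the resulting case analysis, nor is it clear from the sketch how the book-keeping closes. The paper's proof of this case hinges on an observation you don't make: writing $(v')^{-1} = \tilde v\tilde u$ as a BP decomposition with respect to $K = S\setminus\{\sigma\}$, Proposition \ref{P:bplemma}(e) forces $K \subseteq D_L(\tilde u)$, and since $D_L(\tilde u) \subseteq K$ automatically and $v'$ is not maximal, $\tilde u$ must be the \emph{longest} element of $W_K$. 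Together with $D_R(v') = J$ this makes $v'$ almost-maximal in the sense of Definition \ref{D:almostmax}. The paper then applies Lemma \ref{L:almostmax0} — which precisely converts an almost-maximal element into the inverse of the minimal representative of the opposite-side coset — and Lemmas \ref{L:almostmax1}--\ref{L:almostmax3}, which compute those minimal representatives explicitly, to identify $v$ with a tabulated element. Without proving that $\tilde u$ is the longest element of $W_K$ and without the passage through Lemma \ref{L:almostmax0} (or an explicit replacement), your "bounded case analysis" is an intention, not an argument.

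One smaller inaccuracy in the case split: you treat "$\sigma = s$ with $v'$ carrying the decomposition" as merely a favourable possibility, but in fact the paper shows that whenever $v'$ itself (rather than $(v')^{-1}$) carries the leaf BP decomposition, $\sigma = s$ is \emph{forced} by $D_R(v') = J$ and $v'$ not being maximal. So the only genuinely separate case is the $(v')^{-1}$ one; keeping this in mind would have pointed you toward the paper's route of proving $v'$ is almost-maximal.
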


\begin{rmk}
If $v\in W^J$ is almost maximal, then $X^J(v)$ is not necessarily rationally smooth.  For example, let $W$ be of type $D_4$ with $J=S\setminus\{s_4\}$.  Then $v=s_1s_3s_2s_4\in W^J$ is almost maximal but $X^J(v)$ is not rationally smooth.
\end{rmk}

Most of Theorem \ref{T:ratgrass} follows from Proposition \ref{P:almostmax},
leaving only the determination of which Schubert varieties listed in Theorem
\ref{T:ratgrass} part (2) are smooth.  Before we prove Proposition
\ref{P:almostmax}, we first analyze the elements arising in parts (a) and (b)
of Theorem \ref{T:leafbp}.

\begin{lemma}\label{L:almostmax1}
    Let $W = B_n$ or $C_n$ and $J = \{s_2,\ldots,s_n\}.$ Let $v = s_k \cdots
    s_{n-1} s_n s_{n-1} \cdots s_1$ where $1 < k \leq n$ and $w'$ be the longest
    element of $vW_J$. Then the following are true:

    \begin{enumerate}
        \item $D_L(w') = S \setminus \{s_{k-1}\}$.
        \item The minimal length representative of $W_{D_L(w')} w'$ is
            $s_{k-1}\cdots s_1 u_{n,k}^{-1}$, where $u_{n,k}$ is the maximal
            length element of $W_J^{J \setminus \{s_k\}}$.
    \end{enumerate}
\end{lemma}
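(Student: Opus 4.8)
The plan is to work explicitly in the Weyl group of type $B_n$ (or $C_n$), which is the group of signed permutations, and exploit the combinatorial description of length, descents, and minimal coset representatives there. First I would recall that $W_J = W_{\{s_2,\ldots,s_n\}}$ is the parabolic of type $B_{n-1}$ (or $C_{n-1}$), acting on the coordinates $2,\ldots,n$, and that the longest element $w'$ of the coset $vW_J$ can be written as $w' = v u_0$ where $u_0$ is the longest element of $W_J$. The key observation is that right-multiplying $v$ by $u_0$ "straightens out" the tail $s_{n-1}\cdots s_1$ into a more symmetric element; concretely, I expect $w'$ to be the maximal element of $W^{S\setminus\{s_{k-1}\}}$ up to a correction term, which is exactly what a computation of $D_L(w')$ will confirm. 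To compute $D_L(w')$, I would either compute the left descents of $v u_0$ directly using the signed-permutation model (checking for each $s_i$ whether $\ell(s_i w') < \ell(w')$, i.e. whether the one-line notation of $w'$ has the appropriate inversion), or use that $D_L(w') = D_L(v u_0)$ and the fact that $u_0$ is central-like in $W_J$ to reduce to a small computation near position $k-1$.

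For part (1), the heart of the matter is to show $s_i w' < w'$ for all $i \neq k-1$ and $s_{k-1} w' > w'$. I would write the one-line notation of $v$ as a signed permutation explicitly: $v = s_k s_{k+1}\cdots s_{n-1} s_n s_{n-1}\cdots s_1$ sends (reading off the wiring diagram or composing the generators) the value in position $1$ down to $-1$ after cycling, and produces a specific pattern. Composing with $u_0$ (which negates and reverses the block $2,\ldots,n$) gives $w'$, and from its one-line notation the left descent set $D_L(w')$ can be read off as the set of $i$ such that $w'^{-1}(i) > w'^{-1}(i+1)$ (with the appropriate convention at $i=0$ for type $B/C$). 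The claim is that this set is everything except $s_{k-1}$; verifying this is a finite but slightly delicate bookkeeping exercise with signs, and I expect this to be the main obstacle — getting the conventions for the Dynkin diagram labelling in Section \ref{S:dynkin} exactly right, and correctly handling the distinction between the short/long end in $B_n$ versus $C_n$ (though since $B_n$ and $C_n$ have the same Weyl group, the statement is really type-independent once phrased group-theoretically).

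For part (2), once $D_L(w') = S\setminus\{s_{k-1}\}$ is established, the minimal length representative $x$ of the coset $W_{D_L(w')} w'$ is characterized by $\ell(sx) > \ell(x)$ for all $s \in D_L(w')$, equivalently $x \in {}^{D_L(w')}W$. I would guess the candidate $x = s_{k-1}\cdots s_1 u_{n,k}^{-1}$ from the structure of the problem (it should be the "leftover" part of $w'$ after peeling off the big parabolic $W_{D_L(w')}$), and then verify two things: that $x$ lies in ${}^{D_L(w')}W$ (a left-descent computation, again via one-line notation or via the reduced word — noting that $s_{k-1}\cdots s_1$ has no left descent in $\{s_1,\ldots,s_{k-2}\}$ beyond $s_{k-1}$ which is excluded, and $u_{n,k}^{-1}$ lives in $W_J$ so its left descents relative to the relevant set need checking near $s_k$), and that $x$ and $w'$ lie in the same coset, i.e. $w' x^{-1} \in W_{D_L(w')}$, equivalently $\ell(w') = \ell(w' x^{-1}) + \ell(x)$ as a reduced product. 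The length count $\ell(w') = \binom{D_L(w')\text{-part}}{} + \ell(x)$ can be verified by adding up: $\ell(w') = \ell(w_0) - \ell(v')$ type identities, or more directly by checking that the proposed reduced word for $w'$ obtained by concatenating a reduced word for the maximal element of $W_{D_L(w')}$ with $s_{k-1}\cdots s_1 u_{n,k}^{-1}$ has the right length $\ell(v) + \ell(u_0)$. I would carry out this length bookkeeping last, since it is the most mechanical step and serves as a consistency check on the earlier descent computations.
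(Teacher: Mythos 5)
Your plan is a genuinely different route from the paper's. The paper never touches the signed-permutation model: it instead partitions $S$ into $S_1 = \{s_1,\ldots,s_{k-1}\}$ and $S_2 = \{s_k,\ldots,s_n\}$, writes the longest element $w_0$ of $W_J$ as a product $u_0 u_1 u_{n,k}^{-1}$ of longest elements of subparabolics living in $S_1 \cap J$, $S_2 \setminus \{s_k\}$, and the coset space $W_J^{J\setminus\{s_k\}}$, and then massages $w' = v w_0$ by a short chain of commutation identities and ``maximal element of a parabolic'' observations until it has the form (maximal element of $W_{S \setminus \{s_{k-1}\}}$)$\cdot(s_{k-1}\cdots s_1 u_{n,k}^{-1})$ as a reduced product. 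Once that factorization is visible, both parts fall out at once: the first factor being a maximal parabolic element gives $S\setminus\{s_{k-1}\}\subseteq D_L(w')$, non-maximality of $w'$ gives the reverse inclusion, and the second factor is automatically the minimal coset representative. Your approach instead works in one-line notation, reads descents off inversions, and checks the coset membership and length bookkeeping by hand.

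Both routes can succeed, and yours has the advantage of being completely concrete and mechanical: once the one-line notation of $w' = v w_0$ is written down correctly, everything else is a finite check. The paper's factorization argument buys conceptual economy and type-independence at the Coxeter-group level (it never needs to distinguish $B_n$ from $C_n$ or fix a realization), and it produces the minimal coset representative for free rather than requiring a separate guess-and-verify step. Be aware of two concrete risks in your plan if you carry it out: (i) your descent criterion ``$i = 0$'' for the special node doesn't match the paper's labeling, where $s_n$ is the special generator at the double bond, so the inversion condition at that end has to be written for position $n$, and the descent condition there is the sign-flip condition, not the adjacent-transposition inversion; (ii) the length-count sanity check at the end (that the concatenation of a reduced word for the longest element of $W_{S\setminus\{s_{k-1}\}}$ with $s_{k-1}\cdots s_1 u_{n,k}^{-1}$ is reduced and has length $\ell(v) + \ell(w_0)$) is not optional — without it you have only shown $S\setminus\{s_{k-1}\}\subseteq D_L(w')$ and that your candidate lies in ${}^{D_L(w')}W$, but not that it is actually in the coset $W_{D_L(w')}w'$. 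As written the proposal is a plausible plan rather than a proof, but the approach is sound.
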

\begin{proof}
    Partition $S$ into $S_1 = \{s_1,\ldots,s_{k-1}\}$ and $S_2 = \{s_k,\ldots, s_n\}$.
    If $w_0$ denotes the maximal element of $W_J$ then $w' = v w_0$.  Since
    the elements of $W_{S_1}$ and $ W_{S_2 \setminus\{s_k\}}$ commute, we can
    write
    \begin{equation*}
        w_0 = u_0 u_1 u_{n,k}^{-1},
    \end{equation*}
    where $u_0$ is maximal in $W_{S_1\cap J}$ and $u_1$ is maximal in $W_{S_2 \setminus \{s_k\}}$.
    Then
    \begin{align*}
        w'  &= (s_k \cdots s_n \cdots s_1)(u_0u_1 u_{n,k}^{-1})\\
            &= (s_k \cdots s_n \cdots s_k u_1) (s_{k-1} \cdots s_1 u_0 ) u_{n,k}^{-1}.
    \end{align*}
    Since $(s_k \cdots s_n \cdots s_k)$ is maximal in $W^{S_2 \setminus \{s_k\}}_{S_2}$,
    we have that $(s_k \cdots s_n \cdots s_k u_1)$ is a maximal element in
    $W_{S_2}$.  In particular, $S_2\subseteq D_L(w').$  Similarly $(s_{k-1}
    \cdots s_1)$ is maximal in $W^{S_1 \setminus \{s_1\}}_{S_1}$, and hence
    $(s_{k-1} \cdots s_1 u_0)$ is maximal in $W_{S_1}$. Consequently
    \begin{equation*}
        s_{k-1} \cdots s_1 u_0 = u_2 s_{k-1} \cdots s_1,
    \end{equation*}
    where $u_2$ is the maximal element in $W_{S_1 \setminus \{s_{k-1}\}}$. Now we have
    \begin{align*}
        w'  &= (s_k \cdots s_n \cdots s_k u_1)\cdot(u_2 s_{k-1} \cdots s_1 u_{n,k}^{-1})\\
            &= (u_2 s_k \cdots s_n \cdots s_k u_1) \cdot (s_{k-1} \cdots s_1 u_{n,k}^{-1}).
    \end{align*}
    Thus $S_1 \setminus \{s_{k-1}\}\subseteq D_L(w')$ and hence
    $S\setminus\{s_{k-1}\}\subseteq D_L(w')$.  Since $w'$ is not maximal in
    $W$, the element $s_{k-1}\notin D_L(w').$ This proves part (1), and part (2)
    follows from the fact that $(u_2 s_k \cdots s_n \cdots s_k u_1)$ is
    maximal in $W_{D_L(w')}$.
\end{proof}
Note that if $k = 2$ in Lemma \ref{L:almostmax1}, then $D_L(w') = J$, and the
minimal length representative of $W_J w'$ is $s_1 u_{n,k}^{-1} = w^{-1}$.

\begin{lemma}\label{L:almostmax2}
    Let $W = B_n$ or $C_n$ and $J = \{s_1,\ldots,s_{n-1}\}$ where $n \geq 2.$  Let $v = s_1 \cdots s_n$
    and $w'$ be the longest element of $v W_J$.  Then the following are true:
    \begin{enumerate}
        \item $D_L(w') = J$
        \item The minimal length representative of $W_{D_L(w')} w'$ is $s_n \cdots s_1$.
    \end{enumerate}
\end{lemma}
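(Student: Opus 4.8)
The plan is to identify the left-sided parabolic decomposition of $w'$ with respect to $J$; both assertions then fall out of it. First note that $W_J=W_{\{s_1,\dots,s_{n-1}\}}$ is of type $A_{n-1}$ (in the $B_n/C_n$ diagram the double bond joins $s_{n-1}$ and $s_n$), so its longest element $w_{0,J}$ has length $\binom{n}{2}$; since $v=s_1\cdots s_n\in W^J$, the longest element of $vW_J$ is $w'=vw_{0,J}$, with $\ell(w')=\ell(v)+\ell(w_{0,J})=n+\binom{n}{2}$.

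The crux is the identity $w_{0,J}\,v\,w_{0,J}=v^{-1}$ --- equivalently, $w'$ is an involution and $w'=w_{0,J}\,v^{-1}$. To prove it, write $v=c\,s_n$ with $c=s_1\cdots s_{n-1}$ a Coxeter element of $W_J$. Conjugation by $w_{0,J}$ induces the diagram automorphism $s_i\leftrightarrow s_{n-i}$ of $W_J\cong A_{n-1}$, so $w_{0,J}\,c\,w_{0,J}=s_{n-1}\cdots s_1=c^{-1}$, and it remains to check $w_{0,J}\,s_n\,w_{0,J}=c\,s_n\,c^{-1}$. As this is an equality of reflections, it suffices to show $w_{0,J}(\alpha_n)=c(\alpha_n)$, i.e. that $c^{-1}w_{0,J}$ fixes $\alpha_n$. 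Using the staircase reduced word $w_{0,J}=(s_1\cdots s_{n-1})(s_1\cdots s_{n-2})\cdots(s_1)$ one gets $c^{-1}w_{0,J}=(s_1\cdots s_{n-2})(s_1\cdots s_{n-3})\cdots(s_1)$, which is exactly the longest element of $W_{\{s_1,\dots,s_{n-2}\}}$; and every element of $W_{\{s_1,\dots,s_{n-2}\}}$ fixes $\alpha_n$, because $s_n$ is adjacent only to $s_{n-1}$ in the diagram. Combining, $w_{0,J}\,v\,w_{0,J}=c^{-1}(c\,s_n\,c^{-1})=s_n\,c^{-1}=s_ns_{n-1}\cdots s_1=v^{-1}$. (Alternatively one may simply verify in the signed-permutation model that $w'=vw_{0,J}$ is an involution.) Since $w_{0,J}\in W_J$ and $v^{-1}=s_ns_{n-1}\cdots s_1\in{}^JW$ (as $v\in W^J$), the factorization $w'=w_{0,J}\,v^{-1}$ is the left-sided parabolic decomposition of $w'$ with respect to $J$.

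Part (1) then follows quickly: for $s\in J$ we have $\ell(sw')=\ell(sw_{0,J})+\ell(v^{-1})$ --- lengths add across a left-sided parabolic decomposition --- and $\ell(sw_{0,J})=\ell(w_{0,J})-1$ since $w_{0,J}$ is the longest element of $W_J$, so $\ell(sw')=\ell(w')-1$ and $J\subseteq D_L(w')$; while $s_n\notin D_L(w')$, for otherwise $D_L(w')=S$ would force $w'$ to be the longest element $w_0$ of $W$, impossible because $\ell(w')=n+\binom{n}{2}=\binom{n+1}{2}<n^2=\ell(w_0)$ for $n\ge 2$. Hence $D_L(w')=J$. (The inclusion $J\subseteq D_L(w')$ can also be obtained from the relations $s_1v=s_2\cdots s_n\in W^J$ and $s_iv=vs_{i-1}$ for $2\le i\le n-1$.) For part (2), by part (1) $W_{D_L(w')}w'=W_Jw'=W_Jv^{-1}$, whose minimal-length representative is the ${}^JW$-factor $v^{-1}=s_ns_{n-1}\cdots s_1$.

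The one non-formal step is the conjugation identity $w_{0,J}vw_{0,J}=v^{-1}$; everything afterwards is bookkeeping with parabolic decompositions together with a single length count.
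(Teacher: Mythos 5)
Your proof is correct, and it rests on the same pivotal factorization as the paper's: $w' = w_{0,J}\,(s_n s_{n-1}\cdots s_1)$, viewed as the left-sided parabolic decomposition with respect to $J$, from which both claims follow exactly as you say (the paper leaves the final descent/coset bookkeeping implicit, while you spell out $J \subseteq D_L(w')$ via additivity of length and rule out $s_n \in D_L(w')$ by comparing $\binom{n+1}{2}$ with $\ell(w_0)=n^2$). Where you diverge is in how the factorization is obtained. The paper does it in one line of reduced-word surgery: write $w_{0,J} = u_0\, s_{n-1}\cdots s_1$ with $u_0$ the longest element of $W_{\{s_1,\ldots,s_{n-2}\}}$, slide $s_n$ past $u_0$ (they commute), and recognize $s_1\cdots s_{n-1}u_0 = w_{0,J}$. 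You instead prove the conjugation identity $w_{0,J}\,v\,w_{0,J} = v^{-1}$, using the diagram automorphism $s_i \leftrightarrow s_{n-i}$ of the type $A_{n-1}$ parabolic together with a root computation showing $c^{-1}w_{0,J}$ fixes $\alpha_n$ -- which at bottom uses the same commutation of $s_n$ with $\{s_1,\ldots,s_{n-2}\}$, just packaged on the level of roots. Your route is a bit longer and invokes the root system, but it yields the slightly stronger structural fact that $w'$ is an involution with $w' = w_{0,J}v^{-1}$, whereas the paper's argument is purely a word manipulation inside the Coxeter group; both are complete proofs of the lemma.
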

\begin{proof}
    If $w_0$ denotes the longest element of $W_J$, then $w' = v w_0$, and we can write
    \begin{equation*}
    w_0 = u_0 s_{n-1} \cdots s_1,
    \end{equation*}
     where $u_0$ is the longest element of $W_{J \setminus \{s_{n-1}\}}$. Hence
    \begin{equation*}
        w' = (s_1 \cdots s_{n-1}s_n)(u_0 s_{n-1}\cdots s_1)=(s_1 \cdots s_{n-1} u_0)(s_n s_{n-1} \cdots s_1).
    \end{equation*}
    The lemma now follows from the fact that $s_1 \cdots s_{n-1} u_0 = w_0$.
\end{proof}

\begin{lemma}\label{L:almostmax3}
    Let $W = F_4$ and $J = \{s_2,s_3,s_4\}.$   Let $v = s_4 s_3 s_2 s_1$ and $w'$ be
    the longest element of $w W_J$. Then the following are true:
    \begin{enumerate}
        \item $D_L(w') = \{s_1, s_3, s_4\}$
        \item The minimal length representative of $W_{D_L(w')} w'$ is $s_2 s_1 s_3 s_2 s_4 s_3 s_4 s_2 s_3$.
    \end{enumerate}
\end{lemma}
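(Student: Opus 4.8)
The plan is to follow the template of Lemmas \ref{L:almostmax1} and \ref{L:almostmax2}: write $w' = v\,w_0^J$, where $w_0^J$ is the longest element of $W_J = W_{\{s_2,s_3,s_4\}}$, and then rearrange this product into a reduced decomposition that makes the left descent set transparent. The parabolic $W_J$ has type $B_3$ (equivalently $C_3$), so $\ell(w_0^J) = 9$, and since $v = s_4s_3s_2s_1 \in W^J$ has length $4$, we get $\ell(w') = 13$.

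First I would fix a reduced word for $w_0^J$, form the product $s_4 s_3 s_2 s_1 \cdot w_0^J$, and rearrange it using the braid relations of $F_4$ (in particular the order-$4$ relation $s_2 s_3 s_2 s_3 = s_3 s_2 s_3 s_2$) until it takes the form
\begin{equation*}
    w' = (s_1 s_3 s_4 s_3)\,(s_2 s_1 s_3 s_2 s_4 s_3 s_4 s_2 s_3).
\end{equation*}
Since the total length is $4 + 9 = 13 = \ell(w')$, this is a reduced factorization. Equivalently, one can verify directly that the length-$13$ element $z'$ defined by the right-hand side satisfies $\{s_2,s_3,s_4\} \subseteq D_R(z')$ and that the minimal length representative of the coset $z' W_J$ is $s_4 s_3 s_2 s_1$; together these force $z' = v\,w_0^J = w'$.

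Granting this factorization, part (1) follows at once. The subgroup $W_{\{s_1,s_3,s_4\}}$ has type $A_1 \times A_2$, and $s_1 s_3 s_4 s_3$ is its longest element (here $s_1$ commutes with $s_3$ and $s_4$, and $s_3 s_4 s_3$ is longest in $\langle s_3, s_4\rangle$), so $\{s_1,s_3,s_4\} \subseteq D_L(w')$. On the other hand $\ell(w') = 13 < 24 = \ell(w_0)$, so $w'$ is not the longest element of $W$ and hence $D_L(w') \neq S$. Since $\{s_1,s_3,s_4\} = S \setminus \{s_2\}$ is maximal among proper subsets of $S$, we conclude $D_L(w') = \{s_1,s_3,s_4\}$. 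For part (2), set $I = D_L(w')$ and let $y$ be the minimal length representative of $W_I\,w'$, so that $w' = w_0^I\,y$ with $\ell(w') = \ell(w_0^I) + \ell(y)$ and $w_0^I = s_1 s_3 s_4 s_3$. Comparing with the factorization above and cancelling $w_0^I$ on the left gives $y = s_2 s_1 s_3 s_2 s_4 s_3 s_4 s_2 s_3$, as claimed.

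The only substantive step is the word manipulation producing the displayed factorization. Unlike the $B_n$ and $C_n$ computations in Lemmas \ref{L:almostmax1} and \ref{L:almostmax2}, there is no partition of $S$ into mutually commuting blocks to exploit, so the rearrangement has to be carried out by hand using the $F_4$ braid relations, and the order-$4$ relation between $s_2$ and $s_3$ is where care is needed; I expect this to be the main obstacle. Since everything takes place in a finite group of order $1152$, this step can also be confirmed by direct machine computation, which is consistent with the treatment of the exceptional types elsewhere in the paper.
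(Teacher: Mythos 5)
Your proposal is correct and follows essentially the same route as the paper: the paper's proof is exactly the $F_4$ computation rewriting $w' = (s_4 s_3 s_2 s_1)(s_4 s_3 s_2 s_3 s_4 s_3 s_2 s_3 s_2)$ as $(s_1 s_3 s_4 s_3)(s_2 s_1 s_3 s_2 s_4 s_3 s_4 s_2 s_3)$, from which both claims are read off just as you do. Your extra remarks deducing (1) and (2) from the factorization, and the alternative verification via right descents of the candidate word, only make explicit what the paper leaves implicit.
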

\begin{proof}
    Computation in $F_4$ shows that
    \begin{align*}
        w' & = (s_4 s_3 s_2 s_1) (s_4 s_3 s_2 s_3 s_4 s_3 s_2 s_3 s_2) \\
           & = (s_1 s_3 s_4 s_3) (s_2 s_1 s_3 s_2 s_4 s_3 s_4 s_2 s_3).
    \end{align*}
\end{proof}
Note that Lemma \ref{L:almostmax3} also applies to $v = s_1 s_3 s_2 s_4$ in
$F_4$, since $F_4$ has an automorphism sending the simple generator $s_k
\mapsto s_{5-k}$ for $k\leq 4$. (This automorphism is not a diagram
automorphism, and hence is not defined on the root system, but it is defined
for the Coxeter group).

\begin{lemma}\label{L:almostmax0}
    Let $v$ be almost-maximal in $W_{S(v)}^{S(v)\cap J}$ and let $w'$ be the
    longest element in $v W_{S(v)\cap J}$.  Let $w' = u_1 v_1$ be the left sided
    parabolic decomposition of $w'$ with respect to $J':=D_L(w')$.
    Then the following are true:
    \begin{enumerate}
        \item $v_1^{-1}$ is almost-maximal in $W_{S(v_1)}^{J'}$
        \item $X^J(v)$ is rationally smooth if and only $X^{J'}(v_1^{-1})$ is rationally smooth.
    \end{enumerate}
\end{lemma}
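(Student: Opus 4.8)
The plan is to reduce both claims to symmetric facts about $w'$ and $(w')^{-1}$, exploiting that the anti‑automorphism $x\mapsto x^{-1}$ of $W$ interchanges $D_L$ with $D_R$ and left with right parabolic decompositions while preserving length and support. Write $\Sigma=S(v)$. Since $v$ is the minimal and $w'$ the maximal representative of the coset $vW_{\Sigma\cap J}$, we have $v\le w'$ and $S(w')=\Sigma$ (the inclusion $S(w')\subseteq\Sigma$ coming from the reduced factorization $w'=v\cdot w_{0,\Sigma\cap J}$, with $w_{0,\Sigma\cap J}$ the longest element of $W_{\Sigma\cap J}$), and Definition \ref{D:almostmax}(a) for $v$ supplies a $t\in\Sigma$ with $J':=D_L(w')=\Sigma\setminus\{t\}$. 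Note also $v\in W^{S(v)\cap J}_{S(v)}\subseteq W^{\Sigma\cap J}$, so $D_R(v)\cap J=\emptyset$ and $v\in W^J$.

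The key preliminary step is to identify the factors of the left parabolic decomposition $w'=u_1v_1$ relative to $J'$. Because $u_1\in W_{J'}$ and $v_1\in{}^{J'}W$, the product $su_1v_1$ is reduced for every $s\in J'$, so $\ell(sw')-\ell(w')=\ell(su_1)-\ell(u_1)$; combined with $D_L(w')=J'$ this forces $D_L(u_1)=J'$, i.e.\ $u_1$ is the longest element $w_{0,J'}$ of $W_{J'}$. Hence $v_1^{-1}\in W^{J'}$, the longest element of $v_1^{-1}W_{J'}$ equals $v_1^{-1}w_{0,J'}=v_1^{-1}u_1^{-1}=(w')^{-1}$, and by Definition \ref{D:almostmax}(c) for $v$ we get $S(v_1^{-1})=S(v_1)=\Sigma\supseteq J'$. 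Thus the element playing the role of ``$w'$'' in Definition \ref{D:almostmax} applied to $v_1^{-1}\in W^{J'}$ is precisely $(w')^{-1}$, and parts (1)--(2) become assertions purely about $w'$ and $(w')^{-1}$.

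For part (1) I would verify Definition \ref{D:almostmax}(a)--(c) for $v_1^{-1}$ by inverting: condition (a) becomes the statement that $D_R((w')^{-1})=D_L(w')$ and $D_L((w')^{-1})=D_R(w')$ are each $\Sigma$ with one element removed, which is (a) for $v$ with the roles of $s,t$ exchanged; condition (b) concerns the right parabolic decomposition of $(w')^{-1}$ relative to $D_R((w')^{-1})=J'$, which inverts to the left parabolic decomposition $w'=u_1v_1$ relative to $J'=D_L(w')$, the requirement being $S(v_1)=\Sigma$, i.e.\ (c) for $v$; and condition (c) concerns the left parabolic decomposition of $(w')^{-1}$ relative to $D_L((w')^{-1})=D_R(w')$, which inverts to the right parabolic decomposition of $w'$ relative to $D_R(w')$, the requirement being that its $W^{D_R(w')}$‑part have support $\Sigma$, i.e.\ (b) for $v$. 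Hence $v_1^{-1}$ is almost‑maximal in $W_{S(v_1)}^{J'}$.

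For part (2), by the Carrell--Peterson criterion it suffices to show $P_v^J(t)$ is palindromic if and only if $P_{v_1^{-1}}^{J'}(t)$ is. All Bruhat intervals occurring lie in $W_\Sigma$, and $w'=v\cdot w_{0,\Sigma\cap J}$ and $(w')^{-1}=v_1^{-1}\cdot w_{0,J'}$ are BP decompositions (with respect to $\Sigma\cap J$ and $J'$; in each the second factor is the longest element of a standard parabolic, whose Poincar\'e polynomial is the always‑palindromic $W_I(t)$), while $P_v^{\Sigma\cap J}(t)=P_v^J(t)$ since $D_R(x)\subseteq\Sigma$ for all $x\le v$. Lemma \ref{L:ratsmooth} then gives that $P_{w'}(t)$ is palindromic iff $P_v^J(t)$ is, and $P_{(w')^{-1}}(t)$ is palindromic iff $P_{v_1^{-1}}^{J'}(t)$ is; and $P_{w'}(t)=P_{(w')^{-1}}(t)$ because $x\mapsto x^{-1}$ restricts to a length‑preserving poset isomorphism $[e,w']\to[e,(w')^{-1}]$. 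I expect the only real obstacle to be the preliminary step — establishing that $u_1$ is the longest element of $W_{J'}$, so that the ``primed'' element attached to $v_1^{-1}$ is literally $(w')^{-1}$; once that is in place, both parts follow formally from the built‑in symmetry of Definition \ref{D:almostmax} and of Bruhat order under inversion.
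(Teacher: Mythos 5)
Your proposal is correct and follows essentially the same route as the paper: factor $w' = v\,w_{0,S(v)\cap J}$ and $(w')^{-1} = v_1^{-1}\,w_{0,J'}$ as BP decompositions, use Lemma \ref{L:ratsmooth} on each, and connect the two sides via $P_{w'}(t) = P_{(w')^{-1}}(t)$, with part (1) coming from the inversion symmetry of Definition \ref{D:almostmax}. The only difference is that you spell out details the paper leaves implicit, namely that $u_1$ is the longest element of $W_{J'}$ (so the primed element attached to $v_1^{-1}$ is exactly $(w')^{-1}$) and the explicit check of conditions (a)--(c), both of which are verified correctly.
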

\begin{proof}
    Part (1) of the lemma is immediate from Definition \ref{D:almostmax} of
    almost-maximal. For the second part, let $u_0$ be the maximal element of
    $W_{S(v)\cap J}$, so $w' = v u_0$.  By Proposition \ref{P:bplemma}, $w' = v
    u_0$ is a BP decomposition and hence by Lemma \ref{L:ratsmooth}, we have
    $X^J(v)$ is rationally smooth if and only if $X^{\emptyset}(w')$ is
    rationally smooth.  But
    \begin{equation*}
        P_{w'}(t) = P_{(w')^{-1}}(t),
    \end{equation*}
    and so $X^{\emptyset}(w')$ is rationally smooth if and only if
    $X^{\emptyset}((w')^{-1})$ is rationally smooth.  Finally, since $u_1$ is the
    maximal element of $W_{J'}$, we have $w' = u_1v_1$ is a left sided BP
    decomposition.  Thus $X^{\emptyset}((w')^{-1})$ is rationally smooth if and
    only if $X^{J'}(v_1^{-1})$ is rationally smooth.
\end{proof}

\begin{proof}[Proof of Proposition \ref{P:almostmax}]
    Clearly (ii) $\Rightarrow$ (i) in the proposition.  We will show (iii)
    $\Rightarrow$ (ii) and (i) $\Rightarrow$ (iii).  We start with the proof of (iii)
    $\Rightarrow$ (ii). Suppose $v\in W^J$ is an element listed in parts (a)-(c) of
    Theorem \ref{T:leafbp}.  If $v$ is of type $G_2$, then it is easy to see that $v$ is
    almost-maximal.  For types $B,C$ and $F$, it follows from Lemmas
    \ref{L:almostmax1}, \ref{L:almostmax2}, and \ref{L:almostmax3} that all
    elements listed in parts (a)-(c) are almost-maximal. As mentioned
    previously, the elements listed in parts (a)-(c) of Theorem \ref{T:leafbp}
    are precisely the elements listed in the table of Theorem
    \ref{T:ratgrass} for which $s$ is a leaf of the Dynkin diagram. By the second
    parts of Lemmas \ref{L:almostmax1}, \ref{L:almostmax2}, and \ref{L:almostmax3},
    if $v'$ is any another element listed in Theorem \ref{T:ratgrass}, then there
    is a leaf $s$ of the Dynkin diagram, and a rationally smooth element $v \in
    W^{S \setminus \{s\}}$ listed in Theorem \ref{T:leafbp}, such that the longest
    element $w'$ in $v W_{S(v) \cap J}$ has left-sided parabolic decomposition
    $w' = u_1 v_1$, where $v' = v_1^{-1}$. Lemma \ref{L:almostmax0}
    now implies that all the elements listed in Theorem \ref{T:ratgrass} are
    almost-maximal and the corresponding Schubert varieties are rationally smooth.
    This proves (iii) $\Rightarrow$ (ii) of the proposition.

    Now we prove (i) $\Rightarrow$ (iii).  Suppose $v\in W^J$ is not maximal and
    let $w'$ be the longest element of $vW_J$.  Theorem \ref{T:leafbp} implies
    there exists a leaf $s'\in S$ such that either $w'$ or $(w')^{-1}$ has a BP
    decomposition $v'u'$ where $v'$ appears on the list given in parts (a)-(c) of
    Theorem \ref{T:leafbp}.  If $w'=v'u'$, then the fact that $D_R(w')=J$ and $w'$
    is not maximal implies that $s'=s$ and hence, $u'=u$ and $v'=v$.  Now suppose
    that $(w')^{-1}=v'u'$ and let $J'=S\setminus\{s'\}$.  By Proposition
    \ref{P:bplemma}, part (e) we have $S(v')\cap J'=J'\subseteq D_L(u').$  Since
    $w'$ is not maximal, we must have $J'=D_L(u')$ and that $u'$ is the longest
    element of $W_J'$.  Thus $w'$ is almost-maximal.  Lemma \ref{L:almostmax0}
    together with Lemmas \ref{L:almostmax1}, \ref{L:almostmax2}, and
    \ref{L:almostmax3} imply that $v$ is an element listed in Theorem \ref{T:ratgrass}.
\end{proof}

The equivalence of Proposition \ref{P:almostmax} parts (ii) and (iii) gives the
following rephrasing Theorem \ref{T:leafbp}.

\begin{cor}\label{C:leafbp2}
    Let $X^{\emptyset}(w)$ be rationally smooth, where $|S(w)| \geq 2$. Then there is a
    leaf $s \in S(w)$ of the Dynkin diagram of $W_{S(w)}$ such that either $w$ or
    $w^{-1}$ has a BP decomposition $vu$ with respect to $J = S \setminus
    \{s\}$. Furthermore, $s$ can be chosen so that $v$ is either maximal or
    almost-maximal in $W_{S(v)}^{S(v)\cap J}$.
\end{cor}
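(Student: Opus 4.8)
The plan is to derive Corollary~\ref{C:leafbp2} as a direct reformulation of Theorem~\ref{T:leafbp}, replacing the explicit list in parts (a)--(c) by the intrinsic condition of almost-maximality through the equivalence (ii)$\Leftrightarrow$(iii) of Proposition~\ref{P:almostmax}. By the reduction in the remark following Theorem~\ref{T:ratgrass} we may assume $S=S(w)$. First I would apply Theorem~\ref{T:leafbp} to obtain a leaf $s\in S(w)$ of the Dynkin diagram and a BP decomposition $vu$ of $w$ or $w^{-1}$ with respect to $J=S\setminus\{s\}$, where $v$ is either the maximal length element of $W^J$ or one of the elements listed in (a)--(c). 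In either case $s\in S(v)$: if $v$ is maximal in $W^J$ then $v\neq e$, so $S(v)\not\subseteq J$ and hence $s\in S(v)$; if $v$ is one of the elements in (a)--(c) then $s\in S(v)$ by inspection of the explicit reduced words. Consequently $S(v)\cap J=S(v)\setminus\{s\}$ in every case.

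Next I would record the elementary observation that $W_{S(v)}\cap W^J=W_{S(v)}^{S(v)\cap J}$: for $x\in W_{S(v)}$ and $s'\in S\setminus S(v)$ one has $\ell(xs')=\ell(x)+1$, since $\ell(xs')=\ell(x)-1$ would force $s'\in S(x)\subseteq S(v)$; hence $J$-minimality of an element of $W_{S(v)}$ is governed entirely by the reflections in $J\cap S(v)$. Then I would split into the two cases of Theorem~\ref{T:leafbp}. If $v$ is the maximal element of $W^J$, then it lies in $W_{S(v)}^{S(v)\cap J}=W_{S(v)}\cap W^J$ and is $\geq$ every element of that set, so it is the maximal element of $W_{S(v)}^{S(v)\cap J}$ --- the first alternative in the corollary. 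If instead $v$ is one of the elements in (a)--(c), then, as noted immediately after Theorem~\ref{T:leafbp}, $v$ is precisely one of the elements in part (2) of Theorem~\ref{T:ratgrass} for which $s$ is a leaf of the Dynkin diagram. Since $P_w^{\emptyset}(t)=P_{w^{-1}}^{\emptyset}(t)$, the one of $w,w^{-1}$ carrying the BP decomposition has a rationally smooth Schubert variety by the Carrell--Peterson criterion, so $X^J(v)$ is rationally smooth by Lemma~\ref{L:ratsmooth}, and $X^{S(v)\cap J}_{S(v)}(v)$ is rationally smooth by Lemma~\ref{L:parabolicsub}. Applying the implication (iii)$\Rightarrow$(ii) of Proposition~\ref{P:almostmax} inside $W_{S(v)}$, with the role of $J$ played by $S(v)\cap J=S(v)\setminus\{s\}$, then gives that $v$ is almost-maximal in $W_{S(v)}^{S(v)\cap J}$ --- the second alternative.

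There is no genuine obstacle here, since the corollary merely repackages Theorem~\ref{T:leafbp} and all the substance lies in Proposition~\ref{P:almostmax} and the lemmas feeding into its proof, which are already established. The only points needing a little care are transferring the hypotheses of Proposition~\ref{P:almostmax} to the parabolic subgroup $W_{S(v)}$ --- which comes down to the length and support observations above together with Lemma~\ref{L:parabolicsub} --- and checking that the ``$s$ a leaf'' restriction in Theorem~\ref{T:leafbp} picks out exactly the sublist of part (2) of Theorem~\ref{T:ratgrass} consisting of types $B_n$, $C_n$ with $s\in\{s_1,s_n\}$, type $F_4$ with $s\in\{s_1,s_4\}$, and type $G_2$. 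Alternatively, to avoid invoking Lemmas~\ref{L:ratsmooth} and~\ref{L:parabolicsub} to secure the rational-smoothness hypothesis of Proposition~\ref{P:almostmax}, one may simply quote the proof of the implication (iii)$\Rightarrow$(ii), which already establishes almost-maximality of every element on that list without assuming rational smoothness.
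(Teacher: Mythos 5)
Your proposal is correct and follows essentially the same route as the paper, which derives Corollary~\ref{C:leafbp2} in one line as a rephrasing of Theorem~\ref{T:leafbp} via the equivalence (ii)$\Leftrightarrow$(iii) of Proposition~\ref{P:almostmax}. The extra details you supply (reducing to $S=S(w)$, checking $s\in S(v)$, transferring the hypotheses of Proposition~\ref{P:almostmax} to $W_{S(v)}$ via Lemmas~\ref{L:ratsmooth} and~\ref{L:parabolicsub}, or bypassing the rational-smoothness hypothesis by quoting the proof of (iii)$\Rightarrow$(ii)) are exactly the points the paper leaves implicit, and they are handled correctly.
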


Corollary \ref{C:leafbp2} plays an important role in the proof of Theorem
\ref{T:bp2} in the next section.

We finish the proof of Theorem \ref{T:ratgrass} by determining which Schubert
varieties listed in Theorem \ref{T:ratgrass}, part (2) are smooth.

\begin{lemma}\label{L:almostmaxsmooth}
    Let $v\in W_{S(v)}^{S(v)\cap J}$ and let $w'$ be the
    longest element in $v W_{S(v)\cap J}$.  Let $w' = u_1 v_1$ be the left sided
    parabolic decomposition of $w'$ with respect to $J':=D_L(w')$.  Then $X^J(v)$ is smooth if and only if $X^{J'}(v_1^{-1})$ is smooth.
\end{lemma}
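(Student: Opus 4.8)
The plan is to follow the proof of Lemma~\ref{L:almostmax0} essentially word for word, with ``rationally smooth'' replaced by ``smooth'', substituting for the two inputs there that are peculiar to rational smoothness. In place of Lemma~\ref{L:ratsmooth} I would use the elementary fact that a Zariski-locally trivial fibre bundle with smooth fibre has smooth total space if and only if its base is smooth, and in place of the identity $P_{w'}(t)=P_{(w')^{-1}}(t)$ I would use the classical fact that $X^{\emptyset}(w)$ is smooth if and only if $X^{\emptyset}(w^{-1})$ is. Concretely, I would prove the three equivalences ``$X^J(v)$ smooth $\iff$ $X^{\emptyset}(w')$ smooth'', ``$X^{\emptyset}(w')$ smooth $\iff$ $X^{\emptyset}((w')^{-1})$ smooth'', and ``$X^{\emptyset}((w')^{-1})$ smooth $\iff$ $X^{J'}(v_1^{-1})$ smooth'', and then chain them. (Throughout, $W$ is finite, so longest coset elements and the description of preimages of Schubert varieties in $G/B$ are available.)

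For the first equivalence: since $v\in W_{S(v)}^{S(v)\cap J}$ its right descents avoid $S(v)\cap J$, hence avoid $J$, so $v\in W^J$; by Lemma~\ref{L:parabolicsub}, $X^J(v)$ is isomorphic to the Schubert variety of $v$ in $G_{S(v)}/P_{S(v),\,S(v)\cap J}$. As $w'$ is by definition the longest element of the coset $vW_{S(v)\cap J}$, the preimage of this Schubert variety in $G_{S(v)}/B_{S(v)}$ is the Schubert variety of $w'$, and the projection $G_{S(v)}/B_{S(v)}\arr G_{S(v)}/P_{S(v),\,S(v)\cap J}$ is a Zariski-locally trivial bundle with smooth fibre; since $S(w')=S(v)$, another application of Lemma~\ref{L:parabolicsub} identifies the latter Schubert variety with $X^{\emptyset}(w')$. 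For the third equivalence: by the standard identity $D_L(w')\cap J'=D_L(u_1)$ and the hypothesis $J'=D_L(w')$, the factor $u_1$ has left descent set all of $J'$, hence is the longest element of $W_{J'}$; in particular $u_1^{-1}=u_1$, so $(w')^{-1}=v_1^{-1}u_1$ with $v_1^{-1}\in W^{J'}$ is the longest element of the coset $v_1^{-1}W_{J'}$. Thus $X^{\emptyset}((w')^{-1})$ is the preimage of $X^{J'}(v_1^{-1})$ under the smooth bundle $G/B\arr G/P_{J'}$, and the two are smooth together.

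The second equivalence is the heart of the matter and the only genuine departure from the rationally smooth argument: there $P_{w'}=P_{(w')^{-1}}$ together with Carrell--Peterson settled it at once, but smoothness is not directly visible from the Poincar\'e polynomial. Here I would invoke (and, if a reference is wanted, reprove) the inversion symmetry of singular loci: the combinatorial formula for the Zariski tangent space, $\dim T_{xB/B}X^{\emptyset}(w)=\#\{\text{reflections }t: x\le tx\le w\}$, is unchanged when $(x,w)$ is replaced by $(x^{-1},w^{-1})$, because Bruhat order, the length function, and the set of reflections are all stable under inversion, while conjugation by $x$ converts left multiplication into right multiplication; since $X^{\emptyset}(w)$ is smooth exactly when all these numbers equal $\ell(w)=\ell(w^{-1})$, smoothness of $X^{\emptyset}(w')$ and of $X^{\emptyset}((w')^{-1})$ coincide. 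Chaining the three equivalences yields the lemma. I expect the only real obstacle to be making the second equivalence clean --- i.e.\ pinning down the correct statement or reference for inversion symmetry of smooth loci; the first and third equivalences are routine bookkeeping with the fibre-bundle structure of the maps $G/B\arr G/P_J$ and with Lemma~\ref{L:parabolicsub}.
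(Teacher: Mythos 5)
Your overall route is essentially the paper's, just unrolled: the paper proves the lemma in one step by taking $Z=\bigcup_{x\le w'}BxB\subseteq G$, observing that $Z$ is a Zariski-locally trivial parabolic bundle over $X^J(v)$, and that inversion $g\mapsto g^{-1}$ carries $Z$ isomorphically onto $\bigcup_{x\le (w')^{-1}}BxB$, the corresponding bundle over $X^{J'}(v_1^{-1})$. Your first and third equivalences are the same bookkeeping (your use of Lemma \ref{L:parabolicsub} to reduce to $G_{S(v)}$, and your identification of $u_1$ as the longest element of $W_{J'}$ so that $(w')^{-1}$ is the longest element of $v_1^{-1}W_{J'}$, are both correct).

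The gap is in the justification you offer for the middle step, which you yourself single out as the heart of the matter. The formula $\dim T_{xB/B}X^{\emptyset}(w)=\#\{\text{reflections } t: x\le tx\le w\}$ is not correct: it omits the reflections with $tx<x$, and even the corrected count $\#\{t: tx\le w\}$ computes the Zariski tangent space only in type $A$ (Lakshmibai--Seshadri); in general it is only a lower bound, and smoothness must be tested, e.g., by Kumar's criterion, as the paper does for $F_4$. Indeed, if that count always equalled the tangent space dimension, then by the Carrell--Peterson/Deodhar characterization of rational smoothness, smooth and rationally smooth would coincide for every $W$, contradicting the rationally smooth but singular Grassmannian Schubert varieties in types $B_n$, $F_4$, $G_2$ listed in Theorem \ref{T:ratgrass} --- precisely the cases this lemma is needed to decide. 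The inversion-invariance of smoothness you want is true, but the clean proof is geometric rather than tangent-space-combinatorial: $g\mapsto g^{-1}$ is an isomorphism of varieties $\overline{Bw'B}\arr\overline{B(w')^{-1}B}$, and each side is a Zariski-locally trivial $B$-bundle over $X^{\emptyset}(w')$, respectively $X^{\emptyset}((w')^{-1})$, so smoothness transfers. Once you argue this way, your three steps collapse into the paper's single one.
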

\begin{proof}
    Let $u_0$ be maximal in $W_S(v)\cap J$, so that $w' = v u_0$,
    and let
    \begin{equation*}
        Z = \bigcup_{x \leq w'} B x B
    \end{equation*}
    be the inverse image of $X^J(v)$ in $G$. Then $Z$ is a principal
    $P_J$-bundle over $X^J(w)$, so $Z$ is smooth if and only if $X^J(w)$ is
    smooth.  But $Z$ is isomorphic to the inverse image
    \begin{equation*}
        \bigcup_{x \leq (w')^{-1}} B x B
    \end{equation*}
    of $X^{J'}(v_1^{-1})$ in $G$, so the lemma follows.
\end{proof}

By Lemma \ref{L:almostmaxsmooth} and Lemmas \ref{L:almostmax1},
\ref{L:almostmax2}, and \ref{L:almostmax3}, it suffices to determine when
$X^J(w)$ is smooth for $J = S \setminus \{s\}$, $s$ a leaf.  Note that Kumar
has given a general criterion for smoothness of Schubert varieties, and by this
criterion the smoothness of Schubert varieties is independent of
characteristic \cite{Ku96}.

If $W$ is of type $B_n$ or $C_n$, then the singular locus of $X^J(w)$ when $J =
S \setminus \{s\}$, $s$ a leaf, is well-known (see pages 138-142 of
\cite{BL00}). In type $G_2$, the smooth Schubert varieties are also well-known
(see the exercise on page 464 of \cite{Ku02}). Finally, for type $F_4$
we use a computer program to apply Kumar's criterion to the Schubert varieties
in question.
\begin{rmk}
    Note that we only use prior results on smoothness for the rationally smooth
    almost-maximal elements, which do not occur in the simply-laced case.
    Hence the proof of Peterson's theorem (Corollary \ref{C:peterson}) depends
    only on Theorem \ref{T:leafbp}.
\end{rmk}

\section{Existence of Billey-Postnikov decompositions}\label{S:onesided}

In this section, we prove Theorem \ref{T:bp2} and Corollary \ref{C:ryanwolper}.
The main theorem, stated below, is an extension of Theorem \ref{T:leafbp},
wherein we show that if $X^{\emptyset}(w)$ is rationally smooth then $w$ has a
right-sided Grassmannian BP decomposition.

\begin{thm}\label{T:onesidebp}
    Let $W$ be a finite Weyl group. Suppose $X^{\emptyset}(w)$ is rationally
    smooth for some $w \in W$. Then one of the following is true:
    \begin{enumerate}[(a)]
        \item The element $w$ is the maximal element of $W_{S(w)}$.
        \item There exists $s \in S(w) \setminus D_R(w)$ such that $w$ has a right sided BP
            decomposition $w = v u$ with respect to $J = S \setminus \{s\}$,
            where $v$ is either the maximal or an almost-maximal element of
            $W^{S(v)\cap J}_{S(v)}$.
    \end{enumerate}
\end{thm}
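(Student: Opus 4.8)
The plan is to argue by strong induction on $\ell(w)$. First I would reduce to the case $S(w)=S$: replacing $W$ by $W_{S(w)}$ changes neither the rational smoothness of $X^{\emptyset}(w)$ (Lemma \ref{L:parabolicsub}), nor $D_R(w)$, nor whether $w$ is maximal in $W_{S(w)}$, nor the existence of Grassmannian BP decompositions. If $|S|\le 1$, or if $w$ is the longest element of $W_S$, then (a) holds; so assume $|S|\ge 2$ and $w$ non-maximal. Then Corollary \ref{C:leafbp2} applies: there is a leaf $s_0$ of the Dynkin diagram of $W$ and a BP decomposition $pq$ with respect to $J_0:=S\setminus\{s_0\}$ of $w$ \emph{or} of $w^{-1}$, with $p$ maximal or almost-maximal in $W^{S(p)\cap J_0}_{S(p)}$. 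Since $s_0\in S(w)\subseteq S(p)\cup S(q)$ while $S(q)\subseteq J_0$, we get $s_0\in S(p)$, so $p\ne e$ and $\ell(q)<\ell(w)$.

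Next I would pass to the fibre. In either case $q\in W_{J_0}$. Since $X^{\emptyset}(w)$ is rationally smooth, $P_w$ is palindromic, and applying Lemma \ref{L:ratsmooth} to the right-sided BP decomposition $pq$ (of $w$ in Case A below, of $w^{-1}$ in Case B) shows $P_q=P_{q^{-1}}$ is palindromic, so $X^{\emptyset}(q)$ and $X^{\emptyset}(q^{-1})$ are rationally smooth. Applying the induction hypothesis inside $W_{S(q)}$ to $q$ (Case A) or $q^{-1}$ (Case B): either that element is the longest element of $W_{S(q)}$, or it has a right-sided Grassmannian BP decomposition $v_qu_q$ with respect to $S(q)\setminus\{s_q\}$, where $s_q$ is not a right descent of it and $v_q$ is maximal or almost-maximal. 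Since every right descent of $v_q$ lies in $\{s_q\}$ we have $v_q\in W^{S\setminus\{s_q\}}$, and since every $x\le v_qu_q$ has $S(x)\subseteq S(q)$ the Poincar\'{e} factorization is unchanged on passing from $W_{S(q)}$ to $W$; thus $v_qu_q$ is also a Grassmannian BP decomposition with respect to $S\setminus\{s_q\}$ in $W$, with $v_q$ maximal or almost-maximal in $W^{S(v_q)\cap(S\setminus\{s_q\})}_{S(v_q)}$.

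The core of the proof is the reassembly. In Case A ($w=pq$), if $s_0\notin D_R(w)$ then $w=pq$ with respect to $J_0$ is already the desired decomposition (it is Grassmannian since $s_0\in S(w)$, with $p$ maximal or almost-maximal), giving (b). If $s_0\in D_R(w)$ I would re-split, using $D_R(p)=\{s_0\}$: write $w=p\cdot v_q\cdot u_q$ with $p\in W^{J_0}$, $v_q\in W^{J}_{J_0}$, $u_q\in W_J$ where $J:=S\setminus\{s_0,s_q\}$, and feed the chain $\emptyset\subseteq J\subseteq J_0$ into the associativity Lemma \ref{L:bpassoc}: since $w=p(v_qu_q)$ is BP with respect to $(\emptyset,J_0)$ and $v_qu_q=q$ is BP with respect to $(\emptyset,J)$ (using $[e,q]\cap W_J=[e,q]\cap W_{S(q)\setminus\{s_q\}}$), the lemma yields that $w=(pv_q)u_q$ is BP with respect to $(\emptyset,J)$ and $pv_q$ is BP with respect to $(J,J_0)$; comparing supports with the BP decomposition $q=v_qu_q$ with respect to $S\setminus\{s_q\}$ upgrades $w=(pv_q)u_q$ to a Grassmannian BP decomposition with respect to $S\setminus\{s_q\}$, and $D_R(p)=\{s_0\}$ together with Definition \ref{D:almostmax} forces $s_q\notin D_R(w)$ and $pv_q$ maximal or almost-maximal. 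In Case B ($w^{-1}=pq$, i.e., $w=q^{-1}p^{-1}$ a left-sided BP decomposition with respect to $J_0$) I would instead take $q^{-1}=v_qu_q$ and form $w=v_q(u_qp^{-1})$: one checks $v_q\in W^{S\setminus\{s_q\}}$ and $u_qp^{-1}\in W_{S\setminus\{s_q\}}$ (choosing $s_q$ outside $S(p)$ if necessary), so this is a parabolic decomposition with respect to $S\setminus\{s_q\}$, Grassmannian since $s_q\in S(w)$, and the left/right-sided analogues of Proposition \ref{P:bplemma} show it is BP with $s_q\notin D_R(w)$ and $v_q$ maximal or almost-maximal. The degenerate subcase in which the fibre is itself maximal is handled separately: either the reassembled $w$ is maximal in $W_S$ (case (a)), or one repeats the argument with the trivial decomposition of the fibre, or one invokes the structure lemma following Definition \ref{D:almostmax} applied to $p$.

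The hard part will be this reassembly. Making Lemma \ref{L:bpassoc} (and the left-sided analogue of Proposition \ref{P:bplemma}) apply requires arranging that the three-term product $p\cdot v_q\cdot u_q$ (resp.\ $q^{-1}\cdot p^{-1}$ refined by $q^{-1}=v_qu_q$) really is a chain of parabolic decompositions, which is a matter of lining up the relevant supports and parabolic subgroups; and once that is done one must verify---this is where the explicit structure of the almost-maximal elements from Lemmas \ref{L:almostmax1}--\ref{L:almostmax0} and Definition \ref{D:almostmax} gets used---that $s_q\notin D_R(w)$, that $s_q$ can be chosen outside $S(p)$ in Case B, and that the new base (namely $pv_q$ in Case A, $v_q$ in Case B) inherits maximality or almost-maximality. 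The other delicate point is the bookkeeping in the degenerate subcases, where the inductively produced decomposition of the fibre is trivial.
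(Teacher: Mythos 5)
Your overall architecture (induct, invoke Corollary \ref{C:leafbp2}, push the induction into the fibre, then reassemble) is the paper's, and your Case B is essentially the paper's Cases 2--3: the only gap there, that $s_q$ must avoid $S(p)$, is not a free choice but does follow from $s_q\notin D_R(q^{-1})$ together with the left-sided analogue of Proposition \ref{P:bplemma}(d), which gives $S(p)\cap J_0\subseteq D_R(q^{-1})$; so that half is fixable. The real problem is the reassembly in Case A, which is the heart of the theorem (the paper's Case 4). Your application of Lemma \ref{L:bpassoc} is fine as far as it goes: $w=(pv_q)u_q$ is a BP decomposition with respect to $(\emptyset,J)$ for $J=S\setminus\{s_0,s_q\}$. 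But the claimed ``upgrade'' to a Grassmannian BP decomposition with respect to $S\setminus\{s_q\}$ cannot work. By Proposition \ref{P:bplemma}(c), the fibre of any BP decomposition of $w$ with respect to $K=S\setminus\{s_q\}$ must be the maximal element of $[e,w]\cap W_{K}$; since $S(w)=S$ and $s_q\in S(q)\subseteq S\setminus\{s_0\}$, we have $s_0\neq s_q$, $s_0\leq w$ and $s_0\in W_K$, so that maximal element has $s_0$ in its support, whereas $S(u_q)\subseteq S\setminus\{s_0,s_q\}$. Hence $u_q$ is never that maximal element, and $w=(pv_q)u_q$ is never a BP decomposition with respect to $S\setminus\{s_q\}$ (indeed $pv_q$ is typically not even a Grassmannian element, since $D_R(pv_q)\supseteq D_R(v_q)$ and it generally acquires further descents). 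The subsequent claims that Definition \ref{D:almostmax} ``forces'' $s_q\notin D_R(w)$ and $pv_q$ maximal or almost-maximal are likewise unsupported.

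What the paper does in this case is genuinely different, and this is where the real work lies. When the right-sided decomposition $w=pq$ has $q$ non-maximal, one does \emph{not} move both Grassmannian factors into the base: either one shows that in fact $s_0\notin D_R(w)$, so the original decomposition already satisfies (b) (this is how the almost-maximal $p$ subcase and the ``$S(v)\setminus\{s\}\subsetneq S(v')$'' subcase are resolved, using the explicit structure of almost-maximal elements and Lemma \ref{L:paradescent}); or, when $S(p)$ and $S(v_q)$ are non-adjacent, one commutes them and takes the decomposition $w=v_q\,(p\,u_q)$, i.e.\ only the inner Grassmannian factor goes to the base and $p$ is absorbed into the fibre; or, in the boundary case $S(p)\setminus\{s_0\}=S(v_q)$ with both factors maximal, one re-splits the longest element $p\,v_q\,u_{q,0}$ of $W_{S(p)}$ as $x_1x_0$ with respect to $S(p)\setminus\{s_q\}$ and uses $w=x_1(x_0u_{q,1})$. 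None of these reassemblies is the product decomposition $(pv_q)u_q$, and the adjacency/descent analysis needed to decide which of them applies (and to certify $s\notin D_R(w)$ or $s'\notin D_R(w)$) is exactly the content missing from your sketch. So as written the proposal has a genuine gap in its main case.
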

Note that if $w$ is almost-maximal (relative to $J = \emptyset$) then $w$
satisfies part (b) of Theorem \ref{T:onesidebp} by definition.  The requirement that
$s$ not belong to $D_R(w)$ in part (2) of the theorem is critical both for the
inductive proof of the theorem, and for showing that BP decompositions exist in the
relative case.

We introduce some terminology for subsets of the generating set $S$.
\begin{defn}
    A subset $T \subseteq S$ is \emph{connected} if the Dynkin diagram of $W_T$
    is connected. The \emph{connected components} of a subset $T \subseteq
    S$ are the maximal connected subsets of $T$.
\end{defn}
\begin{defn}
    We say that $s$ and $t$ in $S$ are \emph{adjacent} if $s$ and $t$ are
    adjacent in the Dynkin diagram.  We also say that $s$ is adjacent to a subset $T$ if
    $s$ is adjacent to some $t\in T$, and that two subsets $T_1, T_2$ are adjacent if
    there is some element of $T_1$ adjacent to $T_2$.
\end{defn}

We use this terminology for the following lemma:
\begin{lemma}\label{L:paradescent}
    Let $w = vu$ be a right sided parabolic decomposition with respect to some $J
    \subseteq S$. If $s \in S\setminus S(v)$ is adjacent to $S(v)$, then $s \notin D_L(w)$.

    Similarly, let $w = uv$ be a left sided parabolic decomposition with respect to some $J
    \subseteq S$.  If $s \in S\setminus S(v)$ is adjacent to $S(v)$, then $s \notin D_R(w)$.
\end{lemma}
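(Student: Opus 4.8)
The plan is to prove the first statement directly; the second then follows by applying it to $w^{-1}$, since a left sided parabolic decomposition $w=uv$ with respect to $J$ yields a right sided one $w^{-1}=v^{-1}u^{-1}$ with $v^{-1}\in W^J$, $u^{-1}\in W_J$ and $S(v^{-1})=S(v)$, while $D_R(w)=D_L(w^{-1})$. So the whole content is in the first assertion.

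Fix $s\in S\setminus S(v)$ adjacent to $S(v)$ and let $\alpha_s$ be its simple root. I would reduce everything to the claim that $sv\in W^J$. Since no reduced word of $v$ involves $s$, we have $\ell(sv)=\ell(v)+1$, and granting $sv\in W^J$ the product $sw=(sv)u$ is a parabolic decomposition with respect to $J$, hence reduced, so $\ell(sw)=\ell(sv)+\ell(u)=\ell(v)+1+\ell(u)=\ell(w)+1$, which is exactly $s\notin D_L(w)$. To check $sv\in W^J$ I would verify that $sv$ has no right descent in $J$. For $r\in J$, $v\in W^J$ gives $v(\alpha_r)>0$, and $s$ carries a positive root to a negative root only when that root equals $\alpha_s$; hence $s_r\in D_R(sv)$ would force $v(\alpha_r)=\alpha_s$, i.e.\ $\alpha_r=v^{-1}(\alpha_s)$. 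But $v^{-1}(\alpha_s)$ is a positive root whose expansion in simple roots has coefficient $1$ on $\alpha_s$ (again because no reduced word of $v$ uses $s$), so $\alpha_r=v^{-1}(\alpha_s)$ is possible only if $r=s$ and $v(\alpha_s)=\alpha_s$.

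It therefore remains to rule out $v(\alpha_s)=\alpha_s$, and this is the one point that genuinely uses adjacency. By Chevalley's lemma the stabiliser of $\alpha_s$ in the finite group $W_{S(v)}$ is the standard parabolic subgroup $W_T$ with $T=\{r\in S(v):r\text{ commutes with }s\}$: one applies the lemma to $-\alpha_s$, which is dominant for $W_{S(v)}$ since it is the negative of a simple root distinct from every $\alpha_r$ with $r\in S(v)$, and $\operatorname{Stab}(\alpha_s)=\operatorname{Stab}(-\alpha_s)$. Because $s$ is adjacent to some element of $S(v)$, that element does not commute with $s$ and hence lies outside $T$, so $T\subsetneq S(v)$. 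On the other hand every reduced word of $v$ uses all of $S(v)$, so $v$ lies in no proper standard parabolic of $W_{S(v)}$; in particular $v\notin W_T$, hence $v(\alpha_s)\neq\alpha_s$, which completes the proof. The only nontrivial ingredient is this last step --- identifying the stabiliser of $\alpha_s$ in $W_{S(v)}$ as a standard parabolic and observing that adjacency is precisely the condition that makes this parabolic proper.
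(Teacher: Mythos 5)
Your proof is correct, but it takes a genuinely different route from the one in the paper. The paper argues purely combinatorially, by induction on $\ell(v)$: it writes $v = v_1 t v_0$ with $t$ adjacent to $s$ but $s$ not adjacent to $S(v_0)$, moves the hypothetical descent $s$ through the reduced word, and reduces to the same statement for the shorter element $v_0$. You instead work in the (geometric) root representation: you show the stronger fact that $sv \in W^J$, so that $sw=(sv)u$ is again a parabolic decomposition and hence length-additive, and the only obstruction, $v(\alpha_s)=\alpha_s$, is excluded because the stabiliser of $\alpha_s$ in $W_{S(v)}$ is the standard parabolic on the generators commuting with $s$ (the stabiliser-of-a-dominant-point lemma applied to $-\alpha_s$), which is proper precisely because $s$ is adjacent to $S(v)$, while $v$ has full support $S(v)$. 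Both the reduction of the left-sided statement to the right-sided one via $w^{-1}$ and the overall conclusion agree with the paper. Your argument is shorter and more conceptual, and it isolates a reusable fact ($v$ with full support adjacent to $s$ cannot fix $\alpha_s$, hence $sv\in W^J$); the paper's induction has the mild advantage of being a reduced-word argument valid verbatim for any Coxeter group without invoking roots. Two cosmetic caveats in your write-up: the word ``finite'' before $W_{S(v)}$ is unnecessary (and since the lemma is also invoked in affine type $\tilde{A}$, better dropped -- the stabiliser lemma holds for any Coxeter group acting on its geometric representation, with $B(-\alpha_s,\alpha_r)\ge 0$ playing the role of dominance), and the phrasing with coroots $\alpha_r^\vee$ should be understood in that generality; with those adjustments the proof covers every case in which the paper uses the lemma.
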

\begin{proof}
    Clearly the second statement of the lemma follows from the first by
    considering $w^{-1}$.  We proceed by induction on the length of $v$. If
    $\ell(v) = 1$ then the proof is obvious. Otherwise take a reduced
    decomposition $v = v_1 t v_0$, where $s$ is adjacent to $t$, but not
    adjacent to $S(v_0)$.  (Note that $v_0$ could be equal to the identity.)
    Then $tv_0 \in W^J$ and since $s \notin S(v)$, we have $s\in D_L(w)$ only
    if $s\in D_L(u)$. Assume that $s \in D_L(u)$ and write
    \begin{equation*}
        w =  (v_1 t v_0)(u)=(v_1 t s v_0) (s u).
    \end{equation*}
    Once again, we have $s\in D_L(w)$ only if  $s\in D_L(tsv_0 (su))$. But
    since $t$ and $s$ are adjacent, we have $s\in D_L(tsv_0 (su))$ only if $t
    \in D_L(v_0 (su))$.  If $t \notin J$, then $t\notin S(v_0 (su))$, and we
    are done.  Otherwise, if $t \in J$, then $t$ must be adjacent to $S(v_0)$ since
    $t v_0 \in W^J$.  But $v_0 \in W^J$, so by induction, we get $t \notin
    D_L(v_0(su))$.
\end{proof}

We now proceed to the proof of Theorem \ref{T:onesidebp}. The proof uses multiple reduced
decompositions for the same element, so we have included certain schematic
diagrams to aid the reader in keeping track of the reduced decompositions under
consideration. For example, if $w=w_1w_2w_3$ is a reduced decomposition with
$v=w_1w_2$ and $u=w_2w_3$, then we diagram this relation by

\begin{align*}
    &\text{$\begin{young}[10pt] ]= &==    &==  &== &==w   &==  &== &==    &=] \end{young}$}\\
    &\text{$\begin{young}[10pt] ]= &==    &==v &== &==    &=]  &]= &==w_3 &=] \end{young}$}\\
    &\text{$\begin{young}[10pt] ]= &==w_1 &=]  &]= &==    &==u &== &==    &=] \end{young}$}\\
    &\text{$\begin{young}[10pt] ]= &==w_1 &=]  &]= &==w_2 &=]  &]= &==w_3 &=] \end{young}$}
\end{align*}

\begin{proof}[Proof of Theorem \ref{T:onesidebp}]
    We proceed by induction on $|S(w)|$. It is easy to see that the theorem is
    true if $|S(w)|=1,2$. Hence we can assume that $|S(w)|>2$. We can also
    assume without loss of generality that the Dynkin diagram of $W_{S(w)}$ is
    connected. Since $w$ is rationally smooth, we can apply Corollary
    \ref{C:leafbp2} to get that $w$ has either a right sided BP decomposition $w =
    vu$, or a left sided BP decomposition $w = uv$, with respect to $J = S \setminus
    \{s\}$ where $s \in S$ is a leaf of the Dynkin diagram of $W_{S(w)}$.
    Note that in both cases, $X^{\emptyset}(u)$ is rationally smooth by Lemma
    \ref{L:ratsmooth}.  We now consider four cases, depending on whether we get
    a right or left BP decomposition from Corollary \ref{C:leafbp2}, and
    depending on whether or not $u$ is maximal in $W_{S(u)}$.

    \emph{Case 1}: $w$ has a right sided BP decomposition $w = vu$ as in Corollary
    \ref{C:leafbp2}, where $u$ is maximal in $W_{S(u)}$. If $s\in
    D_R(w),$ then $w$ is the maximal element of $W$. Otherwise, if $s\notin
    D_R(w),$ then the decomposition $w=vu$ satisfies condition (b) in Theorem
    \ref{T:onesidebp}, since $v$ is maximal or almost-maximal in
    $W_{S(v)}^{S(v)\cap J}$ by Corollary \ref{C:leafbp2}.

    \emph{Case 2}: $w$ has a left sided BP decomposition $w = uv$ as in Corollary
    \ref{C:leafbp2}, where $u$ is maximal in $W_{S(u)}$, and $v$ is either
    maximal or almost-maximal in ${}^{S(v)\cap J} W_{S(v)}$. If $S(u)=S(v)\cap J$, then $w$ is either maximal or almost-maximal respectively, in which case we are done.

    Since $S(w)$ is connected, if $S(v)\cap J\subsetneq S(u)$ then we
    can choose $s'\in S(u)\setminus S(v)$ adjacent to $S(v)$.
    Since $u$ is maximal in $W_{S(u)}$, the parabolic decomposition $u=v'u'$
    with respect to $J':=S(u)\setminus\{s'\}$ is a BP decomposition with $v'$ is maximal in $W_{S(u)}^{J'}$. Moreover, $s'\notin D_R(w)$ by Lemma \ref{L:paradescent}. Hence the decomposition $w=v'(u'v)$ satisfies
    condition (b) in Theorem \ref{T:onesidebp}.

    \emph{Case 3}: $w$ has a left sided BP decomposition $w = uv$ as in Corollary
    \ref{C:leafbp2}, where $u$ is not maximal in $W_{S(u)}$. Then by induction,
    we have a right BP decomposition $u=v'u'$ with respect to $J':=S(u)\setminus
    \{s'\}$ for some $s'\in S(u)$, satisfying the conditions of Theorem
    \ref{T:onesidebp}. Since $s' \notin D_R(u)$, we must have $s' \notin
    S(v) \setminus \{s\} \subseteq D_R(u)$, so $w = v' (u'v)$ is a parabolic
    decomposition, and $s' \not\in D_R(w)$. But
    \begin{equation*}
        S(v') \cap J' \subseteq D_L(u')\subseteq D_L(u'v),
    \end{equation*}
    and thus $w = v'(u'v)$ is a BP decomposition with
    respect to $S(w) \setminus \{s'\}$, by Proposition \ref{P:bplemma} (d).
    Since $v'$ is either maximal or almost-maximal by the inductive hypothesis, the
    decomposition $w = v' (u' v)$ satisfies condition (b) of Theorem
    \ref{T:onesidebp}.

    \begin{align*}
        &\text{$\begin{young}[10pt] ]= &==   &==  &== &==w  &== &== &==  &=] \end{young}$}\\
        &\text{$\begin{young}[10pt] ]= &==   &==u &== &==   &=] &]= &==v &=] \end{young}$}\\
        &\text{$\begin{young}[10pt] ]= &==v' &=]  &]= &==u' &=] &]= &==v &=] \end{young}$}
    \end{align*}
    \begin{equation*}
        \text{Figure 1: $w=uv$ in Cases 2 and 3.}
    \end{equation*}

    \emph{Case 4}: $w$ has a right BP decomposition $w = vu$ as in Corollary
    \ref{C:leafbp2}, where $u$ is not maximal in $W_{S(u)}$. Note that if $s
    \notin D_R(w)$ then condition (b) of Theorem \ref{T:onesidebp} is
    satisfied immediately, and we would be done. We consider several subcases where we either prove $s
    \notin D_R(w)$ or we find $s'\notin D_R(w)$ that satisfies condition (b) of Theorem \ref{T:onesidebp}.

    First, suppose that $v$ is almost maximal in $W^{S(v)\cap J}_{S(v)}$. Take
    a reduced decomposition $u = u_0 u_1$, where $u_0$ is the maximal element
    in $W_{S(v) \cap J}$. Then
    \begin{equation*}
        S(v)\cap J\subseteq D_R(v u_0)\subsetneq S(v),
    \end{equation*}
    implying that $s \notin D_R(v u_0)$. It follows
    that $s \not\in D_R(w)$.

    \begin{align*}
        &\text{$\begin{young}[10pt] ]= &==   &==  &== &==w  &== &== &==  &=] \end{young}$}\\
        &\text{$\begin{young}[10pt] ]= &==v  &=]  &]= &==   &== &==u &== &=] \end{young}$}\\
        &\text{$\begin{young}[10pt] ]= &==v &=]  &]= &==u_0 &=] &]= &==u_1 &=] \end{young}$}\\
    \end{align*}
    \begin{equation*}
        \text{Figure 2: $w=vu$ with $u$ not maximal and $v$ almost maximal.}
    \end{equation*}

    Now assume $v$ is maximal in $W^{S(v)\cap J}_{S(v)}$, and apply induction to get a
    BP decomposition $u = v' u'$ with respect to $S(u) \setminus \{s'\}$,
    satisfying condition (b) of Theorem \ref{T:onesidebp}. By Proposition
    \ref{P:bplemma}, $S(v) \setminus \{s\}\subseteq D_L(u)$, and hence
    if $t \in S(v) \setminus \{s\}$ is adjacent to $S(v')$, then
    $t\in S(v')$ by Lemma \ref{L:paradescent}. Now since $v$
    is Grassmannian, the set $S(v)$ is connected, and since $s$ is a leaf, the
    set $S(v) \setminus \{s\}$ is also connected. We conclude that if $S(v)
    \setminus \{s\}$ is adjacent to $S(v')$, then $S(v) \setminus
    \{s\}\subseteq S(v')$.  Furthermore $s$ is adjacent to $S(v) \setminus
    \{s\}$, and hence $s$ is adjacent to $S(v')$. Conversely, if $S(v)
    \setminus \{s\}$ is non-empty and $s$ is adjacent to some element of
    $S(v')$, then this element must be contained in $S(v) \setminus \{s\}$,
    since $S(v)$ is connected and $s$ is adjacent to a unique element of
    $S(w)$. We conclude that either $S(v)$ is not adjacent to $S(v')$, or $S(v)
    \setminus\{s\}\subseteq S(v')$ with $s$ adjacent to $S(v')$.

    In the former case, when $S(v)$ is not adjacent to $S(v')$, the elements of
    $S(v)$ pairwise commute with the elements of $S(v')$.  Hence the
    decomposition $w = v' (v u')$ is a BP decomposition with respect to $S(w)
    \setminus \{s'\}$. The element $v'$ is either maximal or almost-maximal in
    $W_{S(v')}^{S(v') \setminus \{s'\}}$ by induction. Since $s' \notin D_R(u)$
    and $s'\in J$, we conclude that $s' \notin D_R(w)$.

    \begin{align*}
        &\text{$\begin{young}[10pt] ]= &==   &==  &== &==w  &== &== &==  &=] \end{young}$}\\
        &\text{$\begin{young}[10pt] ]= &==v  &=]  &]= &==   &== &==u &== &=] \end{young}$}\\
        &\text{$\begin{young}[10pt] ]= &==v &=]  &]= &==v' &=] &]= &==u' &=] \end{young}$}\\
        &\text{$\begin{young}[10pt] ]= &==v' &==  &== &==v &=] &]= &==u' &=] \end{young}$}
    \end{align*}
    \begin{equation*}
        \text{Figure 3: $w=vu$ with $u$ not maximal, $v$ maximal, and $S(v),S(v')$ not adjacent.}
    \end{equation*}

    This leaves the case that $s$ is adjacent to $S(v')$ and
    $S(v) \setminus \{s\}\subseteq S(v')$. Take a reduced decomposition
    $u' = u_0' u_1'$, where $u_0'$ is maximal in $W_{S(v') \setminus \{s'\}}$.
    Suppose first that $v'$ is maximal, so that $v' u_0'$ is maximal $W_{S(v')}$.
    If $S(v)\setminus\{s\} = S(v')$, then $v v' u_0'$
    is the maximal element of $W_{S(v)}$, and hence there is a BP decomposition
    $v v' u_0' = x_1 x_0$, where $x_1$ is the maximal element of
    $W_{S(v)}^{S(v) \setminus \{s'\}}$ and $x_0$ is the maximal element of
    $W_{S(v) \setminus \{s'\}}$. Since $s' \notin D_R(u)$ and $s' \in J$, we
    have $s' \notin D_R(w)$, and thus $w = x_1 (x_0 u_1')$ is a BP
    decomposition with respect to $S(w) \setminus \{s'\}$ satisfying the
    condition of Theorem \ref{T:onesidebp}. If $S(v) \setminus \{s\}$ is a
    strict subset of $S(v')$, then find $t \in S(v')$ such that $t \notin S(v)
    \setminus \{s\}$.  Consider the parabolic decomposition $v' u_0 = y_0 y_1$,
    where $y_0 \in W_{S(v') \setminus \{t\}}$ and $y_1 \in {}^{S(v') \setminus
    \{t\}} W_{S(v')}$. Since $t \notin S(v)$ and $s$ is adjacent to $S(v') = S(y_1)$, we conclude from Lemma \ref{L:paradescent} that $s \notin D_R(v v' u_0')$. Since $s \notin S(u_1')$, we get that $s \notin
    D_R(w)$.

    \begin{align*}
        &\text{$\begin{young}[13pt] ]= &==  &== &== &==   &==  &==w&== &==   &== &== &=] \end{young}$}\\
        &\text{$\begin{young}[13pt] ]= &== v&=] &]= &==   &==  &== &==u&==   &== &== &=] \end{young}$}\\
        &\text{$\begin{young}[13pt] ]= &==v &=] &]= &==v' &=]  &]= &== &==u' &== &== &=] \end{young}$}\\
        &\text{$\begin{young}[13pt] ]= &==v &=] &]= &==v' &=]  &]= &==u'_0 &=]&]= &== u'_1 &=] \end{young}$}\\
        &\text{$\begin{young}[13pt] ]= &==v &=] &]= &==y_0 &== &==y_1 &== &=] &]= &== u'_1 &=] \end{young}$}\\
        &\text{$\begin{young}[13pt] ]= &==  &== &==x_1&== &==x_0 &== &== &=] &]= &== u'_1 &=] \end{young}$}
    \end{align*}
    \begin{equation*}
        \text{Figure 4: $w=vu$ with $u$ not maximal, $v$ maximal, and $S(v),S(v')$ adjacent.}
    \end{equation*}

    We use a similar argument for the case that $v'$ is almost-maximal in
    $W^{S(v') \setminus \{s\}}_{S(v')}$. By definition $v' u_0'$ is almost maximal in
    $W_{S(v')}$, meaning that $D_L(v' u_0') = S(v') \setminus \{t\}$ for some
    $t \in S(v')$. Since $s' \notin S(u_1)$ and $u_0'$ is maximal in $W_{S(v')
    \setminus \{s'\}}$, the element $u_1'$ belongs to ${}^{S(v')} W_{S(u)}$.
    Thus $t$ does not belong to $D_L(u)$, since otherwise $t \in D_L(v' u_0')$,
    and we conclude that $t \notin S(v)$. Take the unique left sided
    BP decomposition $v' u_0' = y_0 y_1$, where $y_0$ is the maximal element in
    $W_{D_L(v')}$, and $y_1 \in {}^{S(v') \setminus \{t\}} W_{S(v')}$. Then
    $S(y_1) = S(v')$ by the definition of almost-maximal, and since $s$ is
    adjacent to $S(y_1)$, we conclude from Lemma \ref{L:paradescent} that $s \notin D_R(v v' u_0')$. Consequently $s
    \notin D_R(w)$.
\end{proof}

Using Theorem \ref{T:onesidebp}, we can prove Theorem \ref{T:bp2} for relative Schubert varieties.
\begin{proof}[Proof of Theorem \ref{T:bp2}]
    Suppose $X^J(w)$ is rationally smooth and that $|S(w)
    \setminus J| \geq 2$. Let $w_0$ be maximal in $W_J$, so $w' = w w_0$
    is the longest element of $w W_J$. Then $w'$ is rationally smooth by Lemma
    \ref{L:ratsmooth}, so we can apply Theorem \ref{T:onesidebp}. If $w'$ is
    maximal, then $w$ is the maximal element in $W^{J \cap S(w)}_{S(w)}$, and
    hence by choosing any $s \in S(w)\setminus J$ we get a Grassmannian BP
    decomposition of $w$ with respect to $K = S \setminus \{s\}$ as required.

    If $w'$ is not maximal, then there exists $s \in S(w) \setminus D_R(w)$
    such that $w'$ has a BP decomposition $w' = v u$ with respect to $K = S
    \setminus \{s\}$. Since $s \notin D_R(w)$, $s$ must not be in $J$, so
    $u$ has parabolic decomposition $u = u_1 w_0$ with respect to $W_J$, and
    $w$ has parabolic decomposition $w = v u_1$ with respect to $K$. This
    latter decomposition is a BP decomposition by Proposition \ref{P:bplemma}.
\end{proof}

\subsection{The Ryan-Wolper theorem}\label{S:applications}
In this section we use our results to prove Corollary \ref{C:ryanwolper} on
iterated fibre bundle structures of smooth and rationally smooth Schubert
varieties.  We assume that $X^J(w)$ is a rationally smooth Schubert variety of
finite type.

\begin{proof}[Proof of Corollary \ref{C:ryanwolper}]
    Suppose that $W$ is a finite Weyl group, and that $X^J(w)$ is rationally
    smooth. By repeatedly applying Theorems \ref{T:bp1} and \ref{T:bp2}, we can write
    \begin{equation*}
        w = v_m \cdots v_1,
    \end{equation*}
    where $v_i \in W^{J_{i-1}}_{J_{i}}$ for $J_i := J \cup \bigcup_{j\leq i} S(v_j)$, and
    $v_i (v_{i-1} \cdots v_1)$ is a Grassmannian BP decomposition. Let $w_i :=
    v_m \cdots v_{i+1} \in W^{J_i}$, so $w_0 = w$ and $w_m$ is the identity . By Lemma
    \ref{L:bpassoc}, $w_i = w_{i+1} v_{i+1}$ is a BP decomposition with respect to
    $(J_i,J_{i+1})$, so the morphism
    \begin{equation}\label{E:fibre_map}
        X^{J_i}(w_i) \arr X^{J_{i+1}}(w_{i+1})
    \end{equation}
    is a locally-trivial fibre bundle with fibre $X^{J_i}(v_{i+1})$. Hence the
    sequence
    \begin{equation*}
        X^J(w) = X^{J_0}(w_0) \arr X^{J_1}(w_1) \arr \cdots \arr
            X^{J_{m-1}}(w_{m-1}) = X^{J_{m-1}}(v_m) \arr \Spec k
    \end{equation*}
    meets the required conditions in Corollary \ref{C:ryanwolper}. If $X^J(w)$ is
    smooth, then by Theorem \ref{T:bp1} all the fibres $X^{J_i}(v_i)$ are smooth,
    and hence so are the morphisms.

    Conversely, given a locally-trivial morphism $X \arr Y$ with fibre $F$ such
    that the cohomology $H^*(Y)$ of the base and the cohomology $H^*(F)$ of the
    fibre are concentrated in even degrees, we can argue as in the proof of
    Corollary \ref{C:bpcoh} that $H^*(X)$ is isomorphic to $H^*(Y) \otimes H^*(F)$.
    Thus $H^*(X)$ will be concentrated in even degrees, and if $H^*(Y)$ and
    $H^*(F)$ satisfy Poincar\'{e} duality, then so does $H^*(X)$. If there is a
    sequence
    \begin{equation*}
        X^J(w) = X_0 \arr X_1 \arr \cdots \arr X_m = \Spec k
    \end{equation*}
    in which all the morphisms are locally trivial, and all the fibres are
    rationally smooth Schubert varieties, then $H^*(X_i)$ satisfies Poincar\'{e}
    duality for all $i=0,\ldots,m$. In particular, $X^J(w)$ will be rationally
    smooth by the Carrell-Peterson theorem \cite{Ca94}.
\end{proof}
Only limited results are known about BP decompositions outside of finite type.
Billey and Crites have shown that if $X(w)$ is a rationally smooth Schubert
variety of affine type $\tilde{A}$, then (a la Theorem \ref{T:leafbp}) either
$w$ or $w^{-1}$ has a BP decomposition \cite{BC12}. Using this result, the
proof of Theorem \ref{T:onesidebp} extends to the affine setting with minor
modifications, and thus the Ryan-Wolper theorem also holds in affine type
$\tilde{A}$. The framework of BP decompositions also allows us to prove
\cite[Conjecture 1]{BC12}, that $X(w)$ is smooth in affine type $\tilde{A}$ if
and only if $w$ avoids (as an affine permutation) $3412$ and $4231$.

In \cite{RS12}, the authors show that right-sided BP decompositions exist for
rationally smooth Schubert varieties in the full flag varieties of a large
class of non-finite Weyl groups. Hence the Ryan-Wolper theorem also holds for
Schubert varieties $X^{\emptyset}(w)$ in this class via the application of
Theorem \ref{T:bp1}. However, with the exception of $\tilde{A}_3$, all of the
Coxeter groups in this class are of indefinite type. It is an open problem to
prove the existence of BP decompositions for this class when $J$ is non-empty.

Based on this evidence, the following conjecture seems plausible:
\begin{conjecture}
    If $W$ is any Coxeter group, and $w$ belongs to $W^J$ with $P_w^J(t)$
    palindromic, then $w$ has a Grassmannian BP decomposition. As a
    result, the Ryan-Wolper theorem holds in any Kac-Moody flag
    variety.
\end{conjecture}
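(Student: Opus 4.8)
The statement has two halves: the existence of a Grassmannian BP decomposition for every $w\in W^J$ with $P^J_w$ palindromic, and the iterated fibre bundle structure ("Ryan--Wolper") that follows. The second half is purely formal given the first. If $w$ always admits a Grassmannian BP decomposition with respect to some $K$ with $J\subsetneq K$, one iterates exactly as in the proof of Corollary \ref{C:ryanwolper}: one produces a chain $w=v_m\cdots v_1$ in which every partial product $v_i(v_{i-1}\cdots v_1)$ is a Grassmannian BP decomposition (using the associativity of Lemma \ref{L:bpassoc}), and since $S$ and hence $S(w)$ are finite and each nontrivial step strictly enlarges the ambient parabolic, the chain terminates. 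Theorem \ref{T:bp1}, which already holds for an arbitrary Kac--Moody $G$, then makes each stage a Zariski-locally trivial bundle with rationally smooth Grassmannian Schubert fibre; the converse direction (iterated bundle $\Rightarrow$ rationally smooth) comes from the Leray--Hirsch / proper-base-change argument of Corollary \ref{C:bpcoh} together with Carrell--Peterson. So I would reduce immediately to the existence of BP decompositions.

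For the existence, the first step is to reduce general $J$ to $J=\emptyset$. When $W_J$ is finite, replace $w$ by the longest representative $w'=wu_0$ of $wW_J$, note that $X^\emptyset(w')$ is rationally smooth by Lemma \ref{L:ratsmooth}, locate a right-sided BP decomposition of $w'$ with respect to a maximal proper $K\supseteq J$, and push it down to $w$ via Proposition \ref{P:bplemma}, precisely as in the proof of Theorem \ref{T:bp2}. When $W_J$ is infinite this device is unavailable, and one must instead run the combinatorial induction directly in the relative setting; since the combinatorial inputs (Lemma \ref{L:maxexist}, Proposition \ref{P:bplemma}, Lemma \ref{L:bpassoc}, and Lemma \ref{L:ratsmooth} in the $J=\emptyset$ case as in the remark following it) are already formulated for arbitrary Coxeter groups, I expect this to be a technical rather than a conceptual difficulty.

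The core is then a Coxeter analogue of Theorem \ref{T:onesidebp}: if $X^\emptyset(w)$ is rationally smooth with $|S(w)|\ge 2$, then $w$ itself (not merely $w^{-1}$) has a right-sided BP decomposition $w=vu$ with respect to some maximal proper $K=S\setminus\{s\}$ with $s\notin D_R(w)$, where $v$ is maximal or "almost-maximal" in $W^{S(v)\cap K}_{S(v)}$. The inductive engine of that argument — the four-case analysis, Lemma \ref{L:paradescent}, and the reduced-word manipulations — is purely Coxeter-theoretic and should carry over with no essential change. What it consumes, and what is missing in general type, is two things: (a) a "seed" result playing the role of Theorem \ref{T:leafbp} / Corollary \ref{C:leafbp2}, namely that a rationally smooth $X^\emptyset(w)$ with $|S(w)|\ge 2$ admits a BP decomposition of $w$ or of $w^{-1}$ with respect to some maximal proper standard parabolic; and (b) a classification of rationally smooth Grassmannian Schubert varieties in arbitrary type, together with the fact that the non-maximal ones satisfy the almost-maximality of Definition \ref{D:almostmax} (the role of Theorem \ref{T:ratgrass} and Proposition \ref{P:almostmax}).

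I expect (a) to be the decisive obstacle. In finite type it was obtained by explicit arguments in the classical cases and by exhaustive computer verification in the exceptional ones; in affine $\tilde A$ it came from Billey--Crites; and even for the full flag variety ($J=\emptyset$) only a restricted class of non-finite Coxeter groups is covered by \cite{RS12}. For a general, and in particular indefinite-type, Coxeter group there is at present no uniform mechanism producing such a parabolic factorization from palindromicity of $P_w$; note also that the "leaf of the Dynkin diagram" language on which both the finite and the affine proofs lean has no direct analogue once the Coxeter graph contains cycles. A plausible route would be to establish (b) first — for instance by exploiting the palindromicity and unimodality of relative Poincar\'e polynomials (\cite{BE09}, suitably extended) and the nonnegativity of Kazhdan--Lusztig coefficients (\cite{EW14}) to constrain the Bruhat interval $[e,w]\cap W^J$ — and then bootstrap (a) by an induction on $\ell(w)$ that peels off one simple reflection at a time using Poincar\'e-polynomial factorization arguments in the spirit of \cite[Lemma 6.6]{BP05}. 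Whether this can be carried through uniformly is exactly the content of the conjecture.
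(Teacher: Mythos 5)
You have correctly recognized that this statement is posed by the paper as a conjecture, not a theorem — the paper offers no proof, only the remark preceding it that the existence of BP decompositions for non-empty $J$ is open even in the restricted class of non-finite Coxeter groups treated in \cite{RS12}. Your analysis of the structure is accurate and matches the paper's own framing: the geometric half (Grassmannian BP decomposition $\Rightarrow$ iterated fibre bundle) does reduce formally to the combinatorial half via Theorem \ref{T:bp1} and Lemma \ref{L:bpassoc}, the inductive engine of Theorem \ref{T:onesidebp} (the four-case analysis, Lemma \ref{L:paradescent}, reduced-word manipulations) is Coxeter-theoretic, and the decisive missing ingredients are precisely the two "seeds" you name — a general-type analogue of Theorem \ref{T:leafbp}/Corollary \ref{C:leafbp2}, and a classification of rationally smooth Grassmannian elements with their almost-maximality. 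So this is not a gap in your proposal; it is the open problem.

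Two refinements. First, the geometric conclusion ("Ryan--Wolper holds in any Kac--Moody flag variety") only makes sense for crystallographic $W$, and Theorem \ref{T:bp1} is proved only in the Kac--Moody setting; the combinatorial assertion, by contrast, is for arbitrary Coxeter groups, so the two halves of the conjecture have slightly different scope, and your "purely formal" reduction applies only to the crystallographic part. Second, your caveat about the reduction from general $J$ to $J=\emptyset$ when $W_J$ is infinite is a genuine new obstacle, not merely a technicality: the device in the proof of Theorem \ref{T:bp2} of passing to the longest element $w' = w u_0$ of $w W_J$ has no substitute, and Lemma \ref{L:ratsmooth} for non-empty $J$ relies on \cite{BE09}, which is stated for crystallographic groups. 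Thus even granting a Coxeter analogue of Theorem \ref{T:onesidebp}, the promotion to relative BP decompositions when $W_J$ is infinite would need a genuinely new argument, a point worth stating less optimistically than "technical rather than conceptual."
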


\bibliographystyle{amsalpha}
\bibliography{palindromic}

\providecommand{\bysame}{\leavevmode\hbox to3em{\hrulefill}\thinspace}
\providecommand{\MR}{\relax\ifhmode\unskip\space\fi MR }
\providecommand{\MRhref}[2]{%
  \href{http://www.ams.org/mathscinet-getitem?mr=#1}{#2}
}
\providecommand{\href}[2]{#2}
\begin{thebibliography}{Kum02}

\bibitem[BC12]{BC12}
Sara Billey and Andrew Crites, \emph{Pattern characterization of rationally
  smooth affine {S}chubert varieties of type {$A$}}, J. Algebra \textbf{361}
  (2012), 107--133.

\bibitem[BE09]{BE09}
Anders Bj{\"o}rner and Torsten Ekedahl, \emph{On the shape of {B}ruhat
  intervals}, Ann. of Math. (2) \textbf{170} (2009), no.~2, 799--817.

\bibitem[Bil98]{Bi98}
Sara~C. Billey, \emph{Pattern avoidance and rational smoothness of {S}chubert
  varieties}, Adv. Math. \textbf{139} (1998), no.~1, 141--156.

\bibitem[BK04]{BK04}
M.~Brion and S.~Kumar, \emph{Frobenius splitting methods in geometry and
  representation theory}, Progress in Mathematics, Birkh{\"a}user Boston, 2004.

\bibitem[BL00]{BL00}
Sara Billey and V.~Lakshmibai, \emph{Singular loci of {S}chubert varieties},
  Progress in Mathematics, vol. 182, Birkh\"auser Boston, Inc., Boston, MA,
  2000.

\bibitem[Bor91]{Bo91}
A.~Borel, \emph{Linear algebraic groups}, Graduate Texts in Mathematics, vol.
  126, Springer-Verlag, 1991.

\bibitem[Bou68]{Bo68}
N.~Bourbaki, \emph{\'{E}l\'ements de math\'ematique. {F}asc. {XXXIV}. {G}roupes
  et alg\`ebres de {L}ie. {C}hapitre {IV}: {G}roupes de {C}oxeter et syst\`emes
  de {T}its. {C}hapitre {V}: {G}roupes engendr\'es par des r\'eflexions.
  {C}hapitre {VI}: syst\`emes de racines}, Actualit\'es Scientifiques et
  Industrielles, No. 1337, Hermann, Paris, 1968.

\bibitem[BP99]{BP99}
Michel Brion and Patrick Polo, \emph{Generic singularities of certain
  {S}chubert varieties}, Math. Z. \textbf{231} (1999), no.~2, 301--324.

\bibitem[BP05]{BP05}
Sara Billey and Alexander Postnikov, \emph{Smoothness of {S}chubert varieties
  via patterns in root subsystems}, Adv. in Appl. Math. \textbf{34} (2005),
  no.~3, 447--466.

\bibitem[Car94]{Ca94}
James~B. Carrell, \emph{The {B}ruhat graph of a {C}oxeter group, a conjecture
  of {D}eodhar, and rational smoothness of {S}chubert varieties}, Algebraic
  groups and their generalizations: classical methods ({U}niversity {P}ark,
  {PA}, 1991), Proc. Sympos. Pure Math., vol.~56, Amer. Math. Soc., Providence,
  RI, 1994, pp.~53--61.

\bibitem[CK03]{CK03}
James~B. Carrell and Jochen Kuttler, \emph{Smooth points of {$T$}-stable
  varieties in {$G/B$} and the {P}eterson map}, Invent. Math. \textbf{151}
  (2003), no.~2, 353--379.

\bibitem[Dye]{Dye}
Matt Dyer, \emph{Rank two detection of singularities of {S}chubert varieties},
  preprint.

\bibitem[EW14]{EW14}
Ben Elias and Goerdie Williamson, \emph{The {H}odge theory of {S}oergel
  bimodules}, Annals of Mathematics \textbf{180} (2014), no.~3, 1089--1136.

\bibitem[Gas98]{Ga98}
Vesselin Gasharov, \emph{Factoring the {P}oincar\'e polynomials for the
  {B}ruhat order on {$S_n$}}, J. Combin. Theory Ser. A \textbf{83} (1998),
  no.~1, 159--164.

\bibitem[HM13]{HM13}
Jaehyun Hong and Ngaiming Mok, \emph{Characterization of smooth {S}chubert
  varieties in rational homogeneous manifolds of {P}icard number 1}, Journal of
  Algebraic Geometry \textbf{22} (2013), 333--362.

\bibitem[JW14]{JW12}
Daniel Juteau and Geordie Williamson, \emph{Kumar's criterion modulo {$p$}},
  Duke Math. J. \textbf{163} (2014), no.~14, 2617--2638.

\bibitem[Kum96]{Ku96}
Shrawan Kumar, \emph{The nil {H}ecke ring and singularity of {S}chubert
  varieties}, Invent. Math. \textbf{123} (1996), no.~3, 471--506.

\bibitem[Kum02]{Ku02}
\bysame, \emph{Kac-{M}oody groups, their flag varieties and representation
  theory}, Progress in Mathematics, vol. 204, Birkh\"auser Boston Inc., Boston,
  MA, 2002.

\bibitem[Las98]{La98}
Alain Lascoux, \emph{Ordonner le groupe sym\'etrique: pourquoi utiliser
  l'alg\`ebre de {I}wahori-{H}ecke?}, Documenta Mathematica \textbf{{Extra
  Volume ICM III}} (1998), 355--364.

\bibitem[LS90]{LS90}
V.~Lakshmibai and B.~Sandhya, \emph{Criterion for smoothness of {S}chubert
  varieties in {${\rm Sl}(n)/B$}}, Proc. Indian Acad. Sci. Math. Sci.
  \textbf{100} (1990), no.~1, 45--52.

\bibitem[LW90]{LW90}
V.~Lakshmibai and J.~Weyman, \emph{Multiplicities of points on a {S}chubert
  variety in a minuscule {$G/P$}}, Adv. Math. \textbf{84} (1990), no.~2,
  179--208.

\bibitem[Mih07]{Mi07}
Ion~Alexandru Mihai, \emph{Odd symplectic flag manifolds}, Transformation
  Groups \textbf{12} (2007), no.~3, 573--599.

\bibitem[OY10]{OY10}
Suho Oh and Hwanchul Yoo, \emph{Bruhat order, rationally smooth {S}chubert
  varieties, and hyperplane arrangements}, DMTCS Proceedings \textbf{FPSAC}
  (2010).

\bibitem[Rob14]{Ro12}
C.~Robles, \emph{Singular loci of cominuscule {S}chubert varieties}, Journal of
  Pure and Applied Algebra \textbf{218} (2014), no.~4, 745--759.

\bibitem[RS14]{RS12}
Edward Richmond and William Slofstra, \emph{Rationally smooth elements of
  {C}oxeter groups and triangle group avoidance}, J. Algebraic Combin.
  \textbf{39} (2014), no.~3, 659--681.

\bibitem[Rya87]{Ry87}
Kevin~M. Ryan, \emph{On {S}chubert varieties in the flag manifold of {${\rm
  Sl}(n,\mathbb{C})$}}, Mathematische Annalen \textbf{276} (1987), no.~2,
  205--224.

\bibitem[Tam94]{Ta94}
G{\"u}nter Tamme, \emph{Introduction to \'etale cohomology}, Universitext,
  Springer-Verlag, Berlin, 1994, Translated from the German by Manfred Kolster.

\bibitem[vdH74]{vdHo74}
A~van~den Hombergh, \emph{About the automorphisms of the bruhat-ordering in a
  coxeter group}, Indagationes Mathematicae {(Proceedings)} \textbf{77} (1974),
  no.~2, 125--131.

\bibitem[Wol89]{Wo89}
James~S Wolper, \emph{A combinatorial approach to the singularities of
  {S}chubert varieties}, Advances in Mathematics \textbf{76} (1989), no.~2,
  184--193.

\end{thebibliography}

\end{document}